\newcommand{\norm}[1]{\left\| #1 \right\|}
\numberwithin{equation}{section}
\newtheorem{theorem}{Theorem}
\newtheorem{proposition}[theorem]{Proposition}
\newtheorem{lemma}[theorem]{Lemma}
\newtheorem{corollary}[theorem]{Corollary}
\theoremstyle{definition}
\newtheorem{definition}[theorem]{Definition}
\theoremstyle{remark}
\newtheorem{remark}[theorem]{Remark}
\newcommand\R{{\mathbb R}}
\newcommand{\Cdiv}{C_{\mathrm{div}}}
\newcommand{\Cgeh}{C_{\mathrm{G}}}
\newcommand{\cA}{\mathcal A}
\newcommand{\cB}{\mathcal B}
\newcommand{\cG}{\mathcal G}
\newcommand{\cQ}{\mathcal Q}
\newcommand{\cS}{\mathcal S}
\newcommand{\sg}{\mathscr{g}}
\newcommand{\sh}{\mathscr{h}}
\def\fg{\mathfrak{g}}
\def\fh{\mathfrak{h}}
\def\bg{\mathbf{g}}
\def\bh{\mathbf{h}}
\def\eps{{\varepsilon}}
\newcommand\dz{\, \mathrm{d} z}
\newcommand{\dd}{{\, \mathrm d}}
\newcommand{\fa}{\forall \,}
\let\oldmarginpar\marginpar
\renewcommand\marginpar[1]{\-\oldmarginpar[\raggedleft\footnotesize #1]%
{\raggedright\footnotesize #1}}
\title[Gehring' Lemma for kinetic Fokker-Planck
equations]{Gehring's  Lemma for kinetic Fokker-Planck equations}
\author{Jessica Guerand}
\address[Jessica Guerand]{IMAG, Bâtiment 9 sur le Campus du
  Triolet, 499-554 Rue du Truel, 34090 Montpellier, France}
\email{jessica.guerand@umontpellier.fr}
\author{Cyril Imbert}
\address[Cyril Imbert]{Centre National de la Recherche
  Scientifique \& \'Ecole normale sup\'erieure - Université PSL,
  D\'epartement de Math\'ematiques et Applications, UMR 8553, 45
  rue d’Ulm, 75005 Paris, France}
\email{Cyril.Imbert@ens.psl.eu}
\author{Cl\'ement Mouhot}
\address[Cl\'ement Mouhot]{University of Cambridge, Department of
  Pure Mathematics and Mathematical Statistics, Wilberforce Road,
  Cambridge CB3 0WA, United Kingdom}
\email{c.mouhot@dpmms.cam.ac.uk}
\date{\today}
\begin{document}

\begin{abstract}
  In this article, we establish a ``Gehring lemma'' for a real
  function satisfying a reverse H\"older inequality on all
  ``kinetic cylinders'' contained in a large one: it asserts that
  the integrability degree of the function improves under such an
  assumption. The kinetic cylinders are derived from the
  non-commutative group of invariances of the Kolmogorov
  equation. Our contributions here are (1) the extension of
  Gehring's Lemma to this kinetic (hypoelliptic) scaling used to
  generate the cylinders, (2) the localisation of the lemma in
  this hypoelliptic context (using ideas from the elliptic
  theory), (3) the streamlining of a short and quantitative
  proof. We then use this lemma to establish that the velocity
  gradient of weak solutions to linear kinetic equations of
  Fokker-Planck type with rough coefficients have Lebesgue
  integrability strictly greater than two, while the natural
  energy estimate merely ensures that it is square
  integrable. Our argument here is new but relies on
  Poincaré-type inequalities established in previous works.
\end{abstract}

\maketitle
 
\tableofcontents

\section{Introduction}

In this note, we are interested in establishing a version of
Gehring's Lemma in the context of kinetic equations of
Fokker-Planck type. Such equations write
\begin{equation}
  \label{e:main}
  (\partial_t  + v \cdot \nabla_x) f = \nabla_v \cdot ( \cA \nabla_v
  f) + \cB \cdot \nabla_v f + \cS,  \quad (t,x,v) \in Q
\end{equation}
for $0 < \lambda < \Lambda$, some function $\cA$ valued in the
set of real symmetric matrices whose eigenvalues lie in a given
interval $[\lambda, \Lambda]$, some real-valued vector field
$\cB$ bounded in $Q$ (\textit{i.e.}
$\|\cB\|_{L^\infty(Q)} \le \Lambda$) and some real-valued source
term $\cS \in L^2 (Q)$. The unknown function $f$ is also
real-valued and the equation is posed in some cylinder
$Q \subset \R \times \R^d \times \R^d$.

A ``Gehring Lemma'' roughly says that a function $f \in L^q(B)$
for some ball $B$ enjoys in fact better Lebesgue integrability as
soon as the averaged $L^q$-norm of $f$ in any ball $B' \subset B$
is controlled by the averaged $L^1$-norm of $f$ in $B'$, with
uniform constant. It was first proved by Gehring~\cite{MR0402038}
motivated by open problems related to quasiconformal mappings,
and then adapted and used by various authors to study gain
integrability on the gradient of solutions of elliptic equations,
see for instance the following early contributions:
\cite{MR0417568,GiaquintaModica}.
In order to establish a similar result for solutions of parabolic
equations \cite{GiaquintaStruwe,zbMATH01285988,MR1754355}, it is necessary to
extend Gehring's Lemma by replacing Euclidian balls with
parabolic cylinders: $Q_r (t,v) = (t-r^2,t] \times B_r (v)$ which
do respect the invariances of such parabolic equations: the
translations in $(t,v)$ and the parabolic scalings
$(t,v) \mapsto (r^2t,r v)$ for $r>0$.

The class of kinetic Fokker-Planck equations~\eqref{e:main} is
invariant by the slightly more complicated scalings
$r \mapsto (r^2 t, r^3 x, r v)$ for $r>0$. It is preserved by
translations in $(t,v)$ but not in $x$, instead it enjoys
Galilean invariance: given
$z_0 = (t_0,x_0,v_0) \in \R\times\R^d\times \R^d$ and
$z = (t,x,v)$, we define
\begin{equation*}
  z_0 \circ z = (t_0+t,x_0+x + t v_0, v_0 + v).
\end{equation*}
If $f$ solves~\eqref{e:main}, then $f (z_0 \circ z)$ solves
an equation of the same form (in particular, the eigenvalues of
the diffusion matrices still lie in $[\lambda,\Lambda]$).
Because of these two invariances, it is natural to define the
so-called \emph{kinetic cylinders} by $Q_r (z_0) = z_0 \circ
Q_r$ with \(Q_r = (-r^2,0] \times B_{r^3} \times B_r .\) More
explicitly,
\begin{equation*}
Q_r(z_0)=\left\{z=(t,x,v): -r^2<t-t_0\leq 0,
  |x-x_{0}-(t-t_0)v_0|<r^3, |v-v_{0}|<r \right\}.
\end{equation*}
Our first main result is a Gehring Lemma extended to this
setting. For a kinetic cylinder $Q$ and a function
$F \in L^1 (Q)$, we let $\fint_Q F$ denote $|Q|^{-1} \int_Q F$
where $|Q|$ stands for the volume of $Q$.
%-------------------------------------------------------------
\begin{theorem}[Localised Gehring's Lemma with the kinetic  scalings] \label{thm:gehring} Let $\gamma>1$. Consider
  non-negative functions $g \in L^q(Q_{\gamma})$ and  $h \in L^\sigma(Q_{\gamma})$  for some $\sigma>q >1$.
  Assume that there is  $b >1$ and $\theta \in (0,1)$ so that for all $R>0$ and   $z_0\in Q_{\gamma}$ such that  $Q_{\gamma R}(z_0) \subset Q_{\gamma}$
  \begin{equation}
    \label{eq:reverse-holder}
    \fint_{Q_{R}(z_0)} g^q  \le b \left\{ \left( \fint_{Q_{\gamma
            R}(z_0)} g  \right)^q+\fint_{Q_{\gamma  R}(z_0)}
      h^q  \right\}+ \theta\fint_{Q_{\gamma  R}(z_0)} g^q .
  \end{equation}
  If $\theta<\theta_0:= [2 (75)^{4d+2} 4^q]^{-1}$  and $q < p^*:=\min\left(\sigma,\frac{bq-\theta_0}{b-\theta_0}\right)$ then $g \in L^p(Q)$ for
  $p \in [q,p^*)$, and this norm is controlled by
  \begin{equation}
    \label{eq:improved-int}
  \left( \fint_{Q_{1}} g^p  \right)^{1/p} \le \Cgeh \left(
    \left( \fint_{ Q_{\gamma}} g^q  \right)^{1/q}+   \left(
      \fint_{ Q_{\gamma}} h^p  \right)^{1/p} \right)      
\end{equation}

where $\Cgeh$ depends only on $b,\theta,p,q,\gamma, d$. 
\end{theorem}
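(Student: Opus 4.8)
The plan is to follow the classical Gehring strategy based on a Calderón–Zygmund stopping-time decomposition, but carried out with respect to the kinetic cylinders $Q_R(z_0)$ and their left-invariant geometry, and localised so that only cylinders well inside $Q_\gamma$ are used. Fix a level $\lambda$ large compared to $\left(\fint_{Q_\gamma} g^q\right)^{1/q}$ and to $\left(\fint_{Q_\gamma} h^q\right)^{1/q}$. The first step is to build, for each such $\lambda$, a covering of the super-level set $\{g^q > \lambda\}$ (intersected with a slightly smaller cylinder than $Q_1$) by essentially disjoint kinetic cylinders $Q_{R_i}(z_i)$ on which $\fint_{Q_{R_i}(z_i)} g^q$ is comparable to $\lambda$, while on the dilated cylinder $Q_{\gamma R_i}(z_i)$ the average of $g^q$ is $\le \lambda$. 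This is where the combinatorial constant $(75)^{4d+2}$ enters: one needs a Vitali-type covering lemma adapted to the kinetic cylinders, and the exponent $4d+2$ reflects that the kinetic scaling $r\mapsto (r^2,r^3,r)$ gives $|Q_r| \sim r^{2+3d+d}=r^{4d+2}$, so dilating a cylinder by a fixed factor inflates its measure by that power. The localisation — ensuring the dilated cylinders stay inside $Q_\gamma$ — is handled by intersecting with interior subcylinders and using the scale-invariant substitute for "cylinders near the boundary are small", exactly as in the elliptic localisation arguments the authors cite.

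The second step is the key good-$\lambda$ / reverse-Hölder inequality on each stopping cylinder. On $Q_{R_i}(z_i)$ we apply hypothesis \eqref{eq:reverse-holder}: since $\fint_{Q_{\gamma R_i}(z_i)} g^q \le \lambda$ while $\fint_{Q_{R_i}(z_i)} g^q \gtrsim \lambda$, and $g \le \lambda^{1/q}$ outside the bad set, we split the left side into the contribution of $\{g > \eta\lambda^{1/q}\}$ (for a small threshold $\eta$) and the rest; the "rest" contributes at most $C\theta\lambda + C\eta^{q-1}\lambda$, which is absorbed for $\theta<\theta_0$ and $\eta$ small, while the bad part is estimated by $\int_{Q_{\gamma R_i}(z_i)\cap\{g>\eta\lambda^{1/q}\}} g^q$ plus the $h$-term. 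Summing over $i$ and using bounded overlap of the $Q_{\gamma R_i}(z_i)$ gives a distributional inequality of the form
\begin{equation*}
  \int_{\{g^q>\lambda\}\cap Q'} g^q \;\le\; C\,\theta' \int_{\{g^q>c\lambda\}\cap Q_\gamma} g^q \;+\; C \int_{\{h^q>c\lambda\}\cap Q_\gamma} h^q \;+\; C\lambda\,|\{g^q>c\lambda\}|,
\end{equation*}
where $Q'$ is the interior subcylinder and $\theta'$ can be made $<1$ thanks to $\theta<\theta_0$.

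The third step is the purely real-variable conclusion: multiply the displayed inequality by $\lambda^{p/q-2}$ (for $p$ slightly above $q$), integrate in $\lambda$ from a base level $\lambda_0$ to $\infty$, and use Fubini to convert the truncated integrals into $\int g^p$ and $\int h^p$. The factor picked up from the first term on the right is $C\theta'\cdot\frac{q}{p-q}\cdot(\text{something})$; choosing $p-q$ small forces this factor below $1$ so it can be absorbed, and the constraint $p<\frac{bq-\theta_0}{b-\theta_0}$ is exactly what guarantees the absorption is possible with a finite resulting constant (it is the threshold at which the geometric series in the iteration converges). The leftover terms reproduce the claimed bound \eqref{eq:improved-int}, with $\Cgeh$ depending on $b,\theta,p,q,\gamma,d$; a standard truncation/approximation argument (work with $\min(g,k)$ and let $k\to\infty$ by monotone convergence) makes the integration-in-$\lambda$ rigorous without assuming a priori that $g\in L^p$.

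The main obstacle I anticipate is the first step: producing a Vitali-type covering and a genuinely \emph{localised} stopping-time argument in the non-commutative kinetic geometry, where the group law $z_0\circ z$ is not a translation in $x$ and kinetic cylinders do not tile space as nicely as Euclidean cubes. Controlling the overlap of the dilated cylinders $Q_{\gamma R_i}(z_i)$ and keeping them inside $Q_\gamma$ — rather than inside all of space — is the delicate point, and it is presumably where the explicit constants $75$ and $4d+2$ are pinned down; everything after that is the standard Gehring real-variable machinery adapted to the homogeneous dimension $4d+2$.
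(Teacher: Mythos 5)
Your outline follows essentially the same route as the paper: a localisation to reduce to interior cylinders, a stopping-time covering of the superlevel sets by kinetic cylinders via a Vitali lemma, a reverse H\"older (good-$\lambda$) inequality on the level sets, and a final integration over the levels (the paper performs this last step through the layer-cake functions $\sg(t)=\int_{\{\bg>t\}}\bg$ and an integration by parts, rather than by multiplying the distributional inequality by $\lambda^{p/q-2}$ and applying Fubini, but the two are equivalent; your identification of the homogeneous dimension $4d+2$ and of the absorption threshold behind $p^*$ is correct).

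However, the step you yourself flag as ``the main obstacle'' is precisely the non-standard content of the theorem, and your proposal does not supply it. Two concrete ingredients are missing. First, the Vitali lemma: a kinetic cylinder $Q_r(z_0)=z_0\circ\left((-r^2,0]\times B_{r^3}\times B_r\right)$ has its centre $z_0$ on its own time-boundary, so the naive dilation $Q_{5r}(z_0)$ of a selected cylinder does \emph{not} engulf an intersecting cylinder $Q_{r'}(z_1)$ with $r'\le 2r$, since the latter may extend forward in time past $t_0$. The paper's Lemma~\ref{lem:Vitali} therefore replaces the dilation by the cylinder $5Q_r(z_0)$ of radius $5r$ recentred forward in time by $\min(-t_0,12r^2)$, see \eqref{def:stacked cylinder}, and the stopping time then also requires a Lebesgue differentiation theorem along cylinders $Q_r(z_r)$ whose centres move with $r$. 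Second, the localisation: your stopping time starts from ``$\lambda$ large compared to $\fint_{Q_\gamma}g^q$'', but near the parabolic boundary of $Q_\gamma$ the admissible radii (those with $Q_{\gamma R}(z_0)\subset Q_\gamma$) shrink to zero, so there need not exist any admissible cylinder on which the average of $g^q$ falls below $\lambda$, and the stopping time has no starting point there; ``intersecting with interior subcylinders'' does not by itself produce one. The paper resolves this with Iwaniec's device: it replaces $g$ by $\bg(z)=|Q_{\zeta(z)}|^{1/q}\,g(z)/(C_0\|g\|_{L^q(Q_\gamma)})$, where $\zeta$ is comparable to the kinetic distance to $\partial_p Q_\gamma$; this weight forces $\fint_{Q_r(z_0)}\bg^q\le 1$ whenever $r\gtrsim\zeta(z_0)$ (Lemma~\ref{l:localization}), which is exactly the uniform starting point the stopping time needs, while perturbing the reverse H\"older inequality \eqref{eq:reverse-holder} only by a fixed factor $3^{4d+2}$ at scales $r\le\zeta(z_0)$. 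Without these two ingredients the covering in your first step cannot be constructed, and everything downstream remains conditional on it.
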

% -------------------------------------------------------------
Our second main result is an application of the previous lemma to
kinetic equations of Fokker-Planck type, and proves the gain of
integrability on the velocity gradient of their solutions.
% -------------------------------------------------------------
\begin{theorem}[Gain of integrability of the velocity gradient]
  \label{t:gain velocity gradient}
  Let $f$ be a weak solution to~\eqref{e:main} in $Q$ with
  $\cS \in L^{2+\eps_0} (Q)$ for some $\eps_0>0$. Let $\gamma>1$
  and $R>0$. There exist two constants
  $\varepsilon \in (0,\eps_0)$ and $C>0$, both depending on
  $d, \lambda, \Lambda, \gamma, R,\eps_0$, such that, for all
  $z\in Q$ such that $Q_{\gamma R}(z)\subset Q$, we have,
  \begin{equation*}
    \norm{\nabla_v f}_{L^{2+\varepsilon}(Q_{R}(z))}\le C
    \left(\norm{\nabla_v f}_{L^{2}(Q_{\gamma R}(z))} +
      \norm{\cS}_{L^{2+\eps}(Q_{\gamma R} (z))} \right).
  \end{equation*}
\end{theorem}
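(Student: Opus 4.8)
The plan is to deduce a reverse Hölder inequality on kinetic cylinders for $g := |\nabla_v f|$ and $h := |\cS|$ (both taken with exponent $q=2$, so $q^2 = 4$-powers appear once we square) and then to apply Theorem~\ref{thm:gehring}. First I would fix $z_0$ with $Q_{\gamma R}(z_0)\subset Q$ and work on the intrinsic scale: after the change of variables that maps $Q_{\gamma R}(z_0)$ onto $Q_\gamma$ (composition of the Galilean shift $z_0\circ\cdot$ and the kinetic dilation by $R$), the function $f$ still solves an equation of the form~\eqref{e:main} with coefficients satisfying the same bounds, and the source is rescaled in a controlled way. So it suffices to prove, for solutions on $Q_\gamma$, an estimate of the shape
\begin{equation*}
  \fint_{Q_{r}(w_0)} |\nabla_v f|^2 \le b\left\{ \left(\fint_{Q_{\gamma r}(w_0)} |\nabla_v f|\right)^2 + \fint_{Q_{\gamma r}(w_0)}|\cS|^2\right\} + \theta \fint_{Q_{\gamma r}(w_0)}|\nabla_v f|^2
\end{equation*}
for every sub-cylinder, with $\theta$ strictly smaller than the threshold $\theta_0$ of Theorem~\ref{thm:gehring}. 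The key analytic input is a Caccioppoli (energy/Widman hole-filling) inequality: test the equation against $\varphi^2 (f-c)$ for a suitable cutoff $\varphi$ localising between $Q_r(w_0)$ and $Q_{\gamma r}(w_0)$ and a well-chosen constant $c$ (an average of $f$ on the larger cylinder), use ellipticity $\lambda|\nabla_v f|^2 \le \cA\nabla_v f\cdot\nabla_v f$, absorb the drift term $\cB\cdot\nabla_v f$ and the transport term $(\partial_t + v\cdot\nabla_x)$ by Young's inequality, and handle the source by Cauchy–Schwarz. This produces
\begin{equation*}
  \int_{Q_r(w_0)} |\nabla_v f|^2 \lesssim \frac{1}{((\gamma-1)r)^2}\int_{Q_{\gamma r}(w_0)} |f-c|^2 + \int_{Q_{\gamma r}(w_0)}|\cS|^2.
\end{equation*}
The transport part is the delicate point: because $\partial_t + v\cdot\nabla_x$ is only a first-order (hypoelliptic) operator, one cannot simply integrate by parts in $t$ alone; one uses that it annihilates the Galilean-adapted coordinates, so that $(\partial_t + v\cdot\nabla_x)\varphi$ is controlled by $((\gamma-1)r)^{-2}$ in the kinetic scaling, exactly matching the Caccioppoli gain.

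Next I would convert the right-hand side $\fint |f-c|^2$ into something involving only $\nabla_v f$ by invoking a kinetic Poincaré inequality of the type established in the works the authors refer to: there is a constant such that, choosing $c$ to be the $\nabla_v$-adapted average of $f$ on $Q_{\gamma r}(w_0)$,
\begin{equation*}
  \fint_{Q_{\gamma r}(w_0)} |f-c|^{2} \lesssim r^2\left( \fint_{Q_{\gamma' r}(w_0)} |\nabla_v f|^{p_0}\right)^{2/p_0} + r^2\,\fint_{Q_{\gamma' r}(w_0)}|\cS|^{p_0}
\end{equation*}
for some $p_0 < 2$ (this sub-quadratic exponent is the crucial gain — it is what makes the Gehring machinery produce something beyond the trivial bound) and possibly a slightly enlarged cylinder $\gamma' r$, which one reabsorbs by relabelling $\gamma$. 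Then I apply Hölder in the $\nabla_v f$ term, writing $(\fint |\nabla_v f|^{p_0})^{2/p_0}$; the point is that after combining with the Caccioppoli inequality and choosing the cutoff geometry one lands precisely on the structure~\eqref{eq:reverse-holder} but with the sub-quadratic power $p_0$ on the right and power $2$ on the left. Applying Theorem~\ref{thm:gehring} with $q = p_0$ (or the version phrased with a sub-quadratic reverse inequality, which is the standard Gehring setup) yields $|\nabla_v f| \in L^{p_0 + \delta}$ locally with $p_0 + \delta > 2$ for $\delta$ small, hence $|\nabla_v f|\in L^{2+\varepsilon}$ with the quantitative bound~\eqref{eq:improved-int}; unravelling the rescaling gives the stated inequality on $Q_R(z)$ with a constant depending only on $d,\lambda,\Lambda,\gamma,R,\eps_0$.

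The main obstacle I anticipate is twofold. First, making the Caccioppoli estimate genuinely respect the kinetic geometry: the cutoff $\varphi$ must be adapted to the non-Euclidean cylinders $Q_r(w_0)$ and one must check that all three pieces — $\nabla_v\varphi$, $\partial_t\varphi$, $v\cdot\nabla_x\varphi$ — scale like $((\gamma-1)r)^{-1}$, $((\gamma-1)r)^{-2}$ respectively in the intrinsic units, which requires care with the Galilean drift in the $x$-variable and the fact that on $Q_{\gamma r}$ the velocities range over a set of size comparable to $r$ so that $v\cdot\nabla_x\varphi$ is indeed of the right order. Second, quoting the kinetic Poincaré inequality in exactly the form needed — with a sub-quadratic exponent on $\nabla_v f$ and only a mild enlargement of the cylinder — and then tracking the numerology so that the resulting $\theta$ is below the explicit $\theta_0 = [2(75)^{4d+2}4^q]^{-1}$; this may force one to first iterate/enlarge to make the hole-filling constant small, i.e. to run a preliminary "good-$\lambda$" or geometric-series absorption before the Gehring lemma is applicable. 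Everything else — the rescaling bookkeeping and the final Hölder manipulations — is routine.
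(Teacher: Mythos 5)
Your overall architecture (Caccioppoli estimate, then a Poincar\'e-type inequality converting $\fint|f-c|^2$ into a sub-quadratic quantity in $\nabla_v f$, then Gehring, then rescaling) matches the paper's. The gap is in the middle step: you invoke a hypoelliptic Sobolev--Poincar\'e inequality of the form $\fint_{Q_{\gamma' r}}|f-c|^2 \lesssim r^2\bigl(\fint|\nabla_v f|^{p_0}\bigr)^{2/p_0}+\dots$ with $p_0<2$, attributing it to ``the works the authors refer to''. No such sub-quadratic inequality is established there or here. What is actually available is, on the one hand, the same-exponent hypoelliptic Poincar\'e $\|f-\langle\langle f\rangle\rangle\|_{L^q}\lesssim \|\nabla_v f\|_{L^q}+\|\cS\|$ for $1<q\le 2$ (Theorem~\ref{t:hpw} and Corollary~\ref{c:hpw}), and on the other hand the velocity-averaging gain of integrability $\|f\|_{L^p}\lesssim\|f\|_{L^2}+\|\cS\|_{L^2}$ with $p=2+1/d$ (Proposition~\ref{prop:Gain sol}, hence Corollary~\ref{cor:gain}), which gains on the function but keeps exponent $2$ on the gradient. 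Neither yields your $L^2\leftarrow L^{p_0}$ estimate, and proving it would essentially require redoing the averaging/gain-of-integrability machinery at the exponent $p_0$; that is the actual difficulty of the theorem, not a routine citation.

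The paper circumvents this by a double interpolation with absorption: write $\|f-c\|_{L^2}^2\le\|f-c\|_{L^p}\|f-c\|_{L^{p'}}$ with $p'=p/(p-1)$, apply Young's inequality, control the $L^p$ factor by Corollary~\ref{cor:gain} (which costs a term $\eps\,\|\nabla_v f\|_{L^2(Q_\gamma)}^2$) and the $L^{p'}$ factor by Corollary~\ref{c:hpw} with $q=p'$, then interpolate $L^{p'}$ between $L^1$ and $L^2$ and apply Young again (costing a second such $\eps$-term and producing the $\bigl(\fint|\nabla_v f|\bigr)^2$ average). The two unavoidable $L^2$-gradient terms on the larger cylinder are precisely why Theorem~\ref{thm:gehring} is stated with the extra $\theta\fint g^q$ term in \eqref{eq:reverse-holder} and the smallness condition $\theta<\theta_0$: the $\theta$-term is not a piece of numerology to be tracked, as your plan suggests, but the structural device that replaces the sub-quadratic Poincar\'e inequality you assume. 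To complete your proof you must either establish that $L^{p_0}$-to-$L^2$ inequality (nontrivial) or reorganize the derivation of the reverse H\"older inequality along the lines above.
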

%-------------------------------------------------------------
\begin{remark}
  Weak solutions of \eqref{e:main} are defined in
  Section~\ref{s:prelim} (see Definition~\ref{defi:weak-sol}).
\end{remark}

\subsection{Comments on the main results and their proofs}

Theorem~\ref{thm:gehring} asserts that an $L^q$ function in a
kinetic cylinder $Q_\gamma$ with $\gamma>1$ is in fact in $L^p$
with $p>q$ in the smaller kinetic cylinder $Q_1$ as soon as a
family of reverse H\"older inequalities hold true at every
scales, where the scales used stem from the invariances of the
Kolomogorov equation. It is an \emph{interior} regularity
estimate since, on the one hand, reverse H\"older inequalities
relate Lebesgue norms in interior cylinders $Q_R(z)$ with norms
in larger cylinders $Q_{\gamma R} (z)$, and on the other hand the
improvement only holds in $Q_1 \subset Q_\gamma$.

The global strategy of the original proof in~\cite{MR0402038} is
to \emph{transfer} reverse H\"older inequalities from cylinders,
whose geometry is independent of the function $g$, onto the
superlevel sets of $g$. This reduces then the proof to the case
of dimension $d=1$, and one concludes by explicit
calculations. However, this strategy works in the whole space,
and the first step of the proof is to reduce to the case where
the function $g$ is defined on the whole space
$\R \times \R^d \times \R^d$. We do so thanks to a
\emph{localization procedure}, following an idea appearing
in~\cite{Iwaniec98} in the case of Gehring's Lemma for elliptic
equations, i.e. for the standard Euclidean scalings. 

In order to transfer the reverse H\"older inequalities from the
function to its superlevel sets, we use a standard covering
argument. This requires an extension of the classical Vitali
lemma and a differentiation theorem à la Lebesgue for kinetic
cylinders. Both results are already proved in~\cite[Lemma~10.5 \&
Theorem~10.3]{ImbertSilvestre}: we recall their statements in
Section~\ref{s:prelim} and refer to~\cite{ImbertSilvestre} for
their proofs.  \medskip

The Gehring lemma is often used to obtain a gain of integrability
on the gradients of solutions to elliptic or parabolic equations
(see below). It naturally starts with an estimate of Cacciopoli
type, \textit{i.e.} the basic local energy estimate for such
equations. It relates the gradient of the solution with its $L^2$
norm, say. In order to derive a local reverse H\"older inequality
for the gradient, it is thus necessary to estimate from above the
$L^2$ norm of the solution with the $L^q$ norm of the gradient
for some $q \in [1,2)$. In order to do so, one typically uses a
gain of integrability on the solutions (not their gradient). In
the simplest case of elliptic equations, the gain of integration
on solutions follows from the energy estimate and the Sobolev
embedding, and the control of the $L^q$ norm of
the solutions by that of their gradient follows from the
Poincaré-Wirtinger inequality. 

Several ingredients are needed in order to extend this strategy
of proof from the elliptic setting to kinetic equations of
Fokker-Planck type. Cacciopoli estimate and the gain of
integrability were already proved in~\cite{MR2068847}. They play
a key role in the proof of the Harnack inequality
in~\cite{gimv}. The other ingredient we need for applying
Gehring's Lemma is an inequality of Poincaré type in $L^q$ with
$q \in (1,2)$. The difficulty lies in the fact that only the
gradient in the velocity variable should appear in the right hand
side of the desired inequality. Several Poincaré inequalities
were proved in the context of kinetic equations. Such an estimate
is key in the derivation of the local H\"older estimate in
\cite{MR2530175}. A more accurate Poincaré inequality was derived
in \cite{am19}. We largely follow \cite{am19} in order to derive
the Poincaré inequality we need. The reader is also referred to
\cite{zbMATH07559605,zbMATH07750909} for other Poincaré
inequalities in the context of kinetic Fokker-Planck equations,
and~\cite{dietert2023quantitative} for their extensions in the
case of boundary conditions.

We note that the gain of integrability of the velocity gradient
of solutions to kinetic Fokker-Planck equations was announced in
\cite{gimv}. However it was based on a Gehring Lemma stated with
other type of cylinders and, as a consequence, the proof for the
estimate on the velocity gradient was incomplete.

\subsection{Short review of literature}

Gehring first discovered in~\cite{MR0402038} the fact that the
integrability of a function improves when it satisfies a family
of reverse Hölder inequalities. It allowed him to unlock the
study of regularity of so-called quasiconformal mappings for
which reverse H\"older inequalities were known to hold true. This
finding turned out to be a powerful tool in the study of
regularity of both (quasi)-minimizers of functionals and
solutions to partial differential equations. Many authors
established variants of the original Gehring lemma in different
contexts. In~\cite{GiaquintaModica}, a local version of the
Gehring lemma was
obtained. In~\cite{modica1985quasiminimi,zbMATH00691575}, the
authors added to the Lebesgue measure a weight that satisfies a
doubling condition. In~\cite{Iwaniec98}, the Lebesgue spaces are
replaced by Orlicz spaces (see also~\cite{MR2924890,MR3879985}).

The other direction in which Gehring's Lemma can be extended,
which is the focus of this paper, is when the reverse Hölder
inequalities are assumed in cylinders that are not necessarily
Euclidian balls. Motivated by the study of hypoelliptic equations
involving square Hörmander operators of type A, i.e. of the ``sum
of squares'' form $\sum_i X_iX_i^*$, a lemma of Gehring type is
proved in this direction in \cite{MR1288498} proved a Gehring
lemma when the reversed Hölder inequalities are assumed in
``balls'' defined by the pseudo-distance created by the vector
fields $X_i$. We note that, in fact, instead of considering a
family of reverse H\"older inequalities, the sufficient condition
for getting an improvement of integrability can be expressed in
terms of two maximal functions associated with those balls. Even
more general operators can be handled by considering reverse
H\"older inequalities on so-called metric balls \cite{MR1398170}
(see also \cite{MR2173373}). However this class of operators does
not include~\eqref{e:main}, which is in fact the prototypical
case of Hörmander operators of type B, due to the presence of the
first order transport operator. Such kinetic equation is moreover
similar to the Kolmogorov equation~\cite{MR1503147} that was the
initial motivation of Hörmander's seminal paper~\cite{MR222474}.

In \cite{zbMATH01285988,MR1754355}, the authors are interested in
the regularity of solutions to parabolic quasilinear systems of
equations. They show that it is possible to derive reverse
H\"older inequalities for the gradients of such solutions if
Euclidian balls are replaced by parabolic cylinders. They show
that this family of inequalities yields again an improvement of
the integrability of these gradient functions.

More recently, non-local versions of the Gehring lemma were
derived \cite{MR3283394,MR3336922}. Local reverse H\"older
inequalities are replaced by estimates of Cacciopoli type for
linear integro-differential equations with kernels comparable
with the one of the fractional Laplacian.  A short proof using
complex interpolation theory were recently given in
\cite{MR4167265}. We note that \emph{discrete} versions of the
Gehring lemma have also been devised, see for
instance~\cite{MR4033780} and the references therein.

We conclude this short review of literature by mentioning the
existence of the nice survey article~\cite{Iwaniec98} about the
Gehring lemma. As mentioned above, we borrowed from this work the
idea of using a cut-off function in the localization process in
the proof of Theorem~\ref{thm:gehring} (see the function $\zeta$
from Subsection~\ref{sub:localization}).

\subsection{Organization and notation}

We gather in Section~\ref{s:prelim} known results that are needed
in the proofs of the main results. Section~\ref{s:reversed} is
devoted to the proof of Gehring's Lemma
(Theorem~\ref{thm:gehring}) while Section \ref{sec:Gain gradient}
contains the proof of the gain of integrability for the velocity
gradient (Theorem~\ref{t:gain velocity gradient}). The variable
$z$ denotes $(t,x,v)$. The open Euclidian ball centered at the
origin of radius $r>0$ is denoted by $B_r$. If the center is
$v_0$, then it is denoted by $B_r (v_0)$. Cylinders centered at
the origin are denoted by
$Q_r:=(-r^2,0] \times B_{r^3} \times B_r$, and $Q_r (z_0)$
denotes $z_0 \circ Q_r$.

\section{Preliminaries}
\label{s:prelim}

\subsection{The Vitali lemma}
We first state and prove a Vitali lemma for kinetic cylinders similar to the one in \cite[Lemma~10.5]{ImbertSilvestre} but whose definition is slightly different. To state this
covering result, we need to introduce those kinetic cylinders that are
scaled by a factor $5$ and shifted in time (note that this
definition is \emph{not} simply the hypoelliptic scaling by a
factor $5$) 
\begin{equation}
  \label{def:stacked cylinder}
  5Q_R (z_0) =   Q_{5R} \left(z_0 \circ \left(\tau_{t_0,R},0,0 \right)
  \right) \quad \mbox{ with }  \tau_{t_0,R}:=\min(-t_0,12 R^2).
\end{equation}
More explicitely, we have
\begin{equation*}
  5Q_{R}(z_0)=\bigg\{(t,x,v): -25R^2+\tau_{t_0,R} <t-t_0\leq \tau_{t_0,R},  |x-x_{0}-(t-t_0)v_0|<(5R)^3, |v-v_{0}|<5R \bigg\}.
\end{equation*}
The shift in time ensures that $z_0$ lies in the interior of
$5Q_R(z_0)$ unless $t_0=0$. It is necessary to consider such a shift in order to
get a covering à la Vitali.
%---------------------
\begin{lemma}[Vitali lemma]
  \label{lem:Vitali}
  Let $(Q_j)_{j\in J}$ be a collection of kinetic cylinders  in $(-\infty,0] \times \mathbb{R}^d\times \mathbb{R}^d$ with
  bounded radii. Then there exists a disjoint countable
  subcollection $(Q_i)_{i \in I}$ where $I\subset J$ such that
  $$\bigcup_{j\in J} Q_{j}  \subset \bigcup_{i\in I} 5Q_{i},$$
  where $5Q_{i}$ is defined in~\eqref{def:stacked cylinder}.
\end{lemma}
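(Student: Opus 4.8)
The plan is to adapt the classical greedy selection argument used in the Euclidean Vitali covering lemma, paying attention to the non-standard definition of the "stacked" cylinder $5Q_R(z_0)$ with its time shift $\tau_{t_0,R} = \min(-t_0, 12R^2)$. First I would note that since the radii are bounded, say $r_j \le M$ for all $j \in J$, I can partition $J$ according to dyadic ranges of the radius: set $J_k = \{ j \in J : M 2^{-k} < r_j \le M 2^{-k+1} \}$ for $k \ge 1$. Then I would select the subcollection inductively over $k$: having chosen a maximal disjoint subfamily from $J_1, \dots, J_{k-1}$, enlarge it by adding a maximal (by Zorn's lemma or a straightforward transfinite/greedy argument, using that disjoint kinetic cylinders with comparable radii have volumes bounded below and hence any disjoint subfamily inside a bounded region is countable) disjoint subfamily of those $Q_j$, $j \in J_k$, that do not meet any already-chosen cylinder. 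Call the resulting countable family $(Q_i)_{i \in I}$.

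The heart of the matter is the covering claim: every $Q_j$ is contained in some $5Q_i$. Fix $j$, so $j \in J_k$ for some $k$. By maximality there is a selected cylinder $Q_i$ with $i \in I$, belonging to some $J_{k'}$ with $k' \le k$ (hence $r_i \ge r_j/2$), such that $Q_i \cap Q_j \ne \emptyset$. I then need the geometric inclusion: if two kinetic cylinders $Q_{r}(z_1)$ and $Q_{r'}(z_2)$ intersect with $r' \ge r/2$, then $Q_r(z_1) \subset 5Q_{r'}(z_2)$. This is where the group structure $\circ$ and the specific scaling must be used carefully. Writing both cylinders via the Galilean translation, a point $z = z_2 \circ w$ lies in $Q_{r'}(z_2)$ iff $w \in Q_{r'}$, and one computes $z_2^{-1} \circ z_1 \circ w'$ for $w' \in Q_r$ using $z_0^{-1} = (-t_0, -x_0 + t_0 v_0, -v_0)$ and the explicit composition law. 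The point is to bound the three "coordinates" (the time interval, the shifted $x$-variable, the $v$-variable) of the resulting set. Using $|t_1 - t_2| < \max(r^2, r'^2) \le (2r')^2$ at a common point, $|v_1 - v_2| < r + r'$, and the analogous bound on $x_1 - x_2 - (t_1-t_2)v_2$, and then combining with the radius-$r$ spread of $w'$, one checks that the $v$-coordinate stays within $r + r + r' \le 5r'$, the time variable stays within an interval of length $\le 25 r'^2$ positioned so that the shift $\tau_{t_2, r'}$ is exactly what is needed to contain it (this is precisely why the shift $\min(-t_2, 12r'^2)$ was introduced — it recenters the time interval so both the "past" of $Q_{r'}(z_2)$ and a forward margin are available), and the $x$-coordinate, after accounting for the transport term $(t-t_2)v_2$ which contributes at most $25 r'^2 \cdot (6r') $-type terms, stays within $(5r')^3$. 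The constants $25$, $12$ and the cube scaling are exactly balanced so that these inequalities hold; I would verify each of the three coordinate bounds explicitly.

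The main obstacle is precisely this last geometric inclusion, because the kinetic scaling is anisotropic (exponents $2,3,1$) and the group is non-commutative, so "cylinder of radius $r$ shifted into a cylinder of radius $5r'$" is not the transparent statement it is in the Euclidean case: one must track how the transport term $t v_0$ interacts with the change of base point, and confirm that the time shift $\tau_{t_0,R}$ is chosen so that $z_0$ sits far enough inside $5Q_R(z_0)$ for the intersecting cylinder to fit. I expect that, as in \cite[Lemma~10.5]{ImbertSilvestre}, the computation goes through with room to spare, but the bookkeeping for the $x$-variable — where both the radius-$r^3$ spread and the term $(t-t_0)v_0$ of size up to roughly $r'^2 (r + r')$ must jointly fit inside $(5r')^3 = 125 r'^3$ — is the step that genuinely uses the numerology of the definition and should be written out in full.
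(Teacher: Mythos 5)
Your plan matches the paper's proof: both reduce the lemma to the standard greedy/maximal selection over dyadic radius classes together with the key geometric inclusion that $Q_{r_1}(z_1)\cap Q_{r_2}(z_2)\neq\emptyset$ with $r_1\le 2r_2$ forces $Q_{r_1}(z_1)\subset Q_{5r_2}(z_2\circ(\tau_{t_2,r_2},0,0))$, the shift $\tau_{t_2,r_2}=\min(-t_2,12r_2^2)$ absorbing exactly the forward overshoot $t-t_2\le r_1^2\le 4r_2^2$ while keeping $t\le 0$. The $x$-bookkeeping you defer does close, and with more room than your rough estimate suggests: the triangle inequality through a common point $z_0$ gives $|x-x_2-(t-t_2)v_2|\le 2r_1^3+r_2^3+\bigl(|t-t_1|+|t_1-t_0|\bigr)\bigl(|v_1-v_0|+|v_0-v_2|\bigr)\le 16r_2^3+r_2^3+2r_1^2\cdot 3r_2\le 41\,r_2^3\le(5r_2)^3$, since the relevant time spread in the cross term is at most $2r_1^2\le 8r_2^2$ rather than the $25r_2^2$ you feared.
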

%---------------------
\begin{proof}
Vitali lemma applies thanks to the following property for $z_1=(t_1,x_1,v_1)$ and $z_2=(t_2,x_2,v_2)$ with $t_1, t_2 \leq 0$ and $r_1, r_2>0$
\begin{equation*}
Q_{r_1}(z_1)\cap Q_{r_2}(z_2) \neq \emptyset \quad \mbox{ and } r_1\leq 2r_2 \quad \Rightarrow Q_{r_1}(z_1)\subset Q_{5r_2}(z_2\circ (\tau_{t_2,r_2},0,0)).
\end{equation*}
Take $z_0=(t_0,x_0,v_0)$ in the intersection and $z=(t,x,v)\in Q_{r_1}(z_1)$. We justify  that $z\in Q_{5r_2}(z_2\circ (\tau_{t_2,r_2},0,0))$ thanks to the following inequalities. Inequality $|v-v_2|\leq 5r_2$ comes naturally. 
Then since $t\leq 0$, $t-t_2\leq -t_2$ and $t-t_2= (t-t_1)+(t_1-t_0)+(t_0-t_2)\leq r_1^2\leq 4r_2^2$ so that $t-t_2\leq \tau_{t_2,r_2}$. Moreover, $t-t_2\geq -r_1^2-r_2^2\geq -5r_2^2\geq -25r_2^2+\tau_{t_2,r_2}$. Finally
\begin{align*}
|x-x_2-(t-t_2)v_2|\leq &|x-x_1-(t-t_1)v_1|+|x_1-x_0+(t_0-t_1)v_1|+|x_0-x_2-(t_0-t_2)v_2|\\
&+(|t-t_1|+|t_1-t_0|)(|v_1-v_0|+|v_0-v_2|)\leq  41 r_2^3 \leq  (5r_2)^3,
\end{align*}
which concludes the proof.
\end{proof}

%--------------------------------

\subsection{Lebesgue's differentiation}

Then we recall Lebesgue's differentiation theorem in the context of kinetic cylinders, see \cite[Theorem 10.3]{ImbertSilvestre}.
%-----------------------------------------------------------------
\begin{lemma}[Lebesgue differentiation]
  \label{lem: lebesgue diff}
  Let $f\in L^{1}(\Omega)$ where $\Omega$ is an open set of
  $\R^{2d+1}$. Then for almost every $z=(t,x,v)\in \Omega$,
  \begin{equation*}
    \lim\limits_{r\rightarrow 0^{+}} \fint_{ Q_{r}(z_r)}
    |f(z')-f(z)| \mathrm{d}t'\mathrm{d}x'\mathrm{d}v'=0
  \end{equation*}
  where the sequences $z_r \in \Omega$ and $r>0$ are so that
  $z \in Q_r (z_r) \subset \Omega$ for all $r>0$.
\end{lemma}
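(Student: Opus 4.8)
The plan is to follow the standard maximal-function route to a Lebesgue differentiation theorem, adapted to the (non-centered) kinetic cylinders: establish a weak-type $(1,1)$ bound for the maximal operator built from the cylinders $Q_r(z_r)\ni z$, check the statement directly for continuous functions, and pass to general $f\in L^1$ by density. The two inputs specific to the kinetic geometry are the Vitali-type covering Lemma~\ref{lem:Vitali} and the fact that the enlargement $Q\mapsto 5Q$ is volume-doubling: since $|Q_r|=\omega_d^2\,r^{4d+2}$ (with $\omega_d=|B_1|$) and, by its definition~\eqref{def:stacked cylinder}, $5Q_R(z_0)$ is the kinetic cylinder $Q_{5R}$ based at $z_0\circ(\tau_{t_0,R},0,0)$, we have $|5Q_R(z_0)|=5^{4d+2}|Q_R(z_0)|$.

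Since the relevant limit only involves cylinders of arbitrarily small radius and the whole construction is invariant under time translation, it suffices to prove the conclusion almost everywhere on $\Omega\cap\{t<T\}$ for each $T\in\N$, working with the restriction of $f$; after a time translation this brings us to the case $\Omega\subset(-\infty,0)\times\R^d\times\R^d$, and then \emph{any} kinetic cylinder contained in $\Omega$ has base time $<0$, so Lemma~\ref{lem:Vitali} applies to any family of such cylinders with bounded radii. For $f\in L^1(\Omega)$ set
\[
\cM f(z):=\sup\Big\{\,\fint_{Q_r(z_r)}|f|\ :\ r>0,\ z\in Q_r(z_r)\subset\Omega\,\Big\}.
\]
To see $\cM$ is of weak type $(1,1)$, fix $\alpha>0$: for each $z$ with $\cM f(z)>\alpha$ pick an admissible cylinder $Q^{(z)}$ with $\fint_{Q^{(z)}}|f|>\alpha$, so $|Q^{(z)}|<\alpha^{-1}\int_{Q^{(z)}}|f|\le\alpha^{-1}\|f\|_{L^1(\Omega)}$, which bounds the radii. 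Lemma~\ref{lem:Vitali} applied to $(Q^{(z)})_z$ produces a disjoint subfamily $(Q_i)_{i\in I}$ with $\{\cM f>\alpha\}\subset\bigcup_z Q^{(z)}\subset\bigcup_i 5Q_i$, hence
\[
|\{\cM f>\alpha\}|\le\sum_i|5Q_i|=5^{4d+2}\sum_i|Q_i|\le\frac{5^{4d+2}}{\alpha}\sum_i\int_{Q_i}|f|\le\frac{5^{4d+2}}{\alpha}\,\|f\|_{L^1(\Omega)}.
\]

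Next define $\Lambda f(z):=\limsup_{r\to0^+}\sup\big\{\fint_{Q_r(z_r)}|f(z')-f(z)|\,\dd z':z\in Q_r(z_r)\subset\Omega\big\}$, so that the lemma amounts to $\Lambda f=0$ a.e. If $g\in C(\Omega)$ then $\Lambda g\equiv0$: because $z$ itself lies in $Q_r(z_r)$, one gets for $z'\in Q_r(z_r)$ that $|t'-t|<2r^2$, $|v'-v|<2r$ and $|x'-x|<2r^3+2r^2(|v|+r)$, so $\operatorname{diam}Q_r(z_r)\to0$ as $r\to0$ uniformly in the admissible choice of $z_r$, and continuity of $g$ at $z$ concludes. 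For general $f\in L^1(\Omega)$ and $\eta>0$, choose $g\in C(\Omega)$ with $\|f-g\|_{L^1(\Omega)}<\eta$; from $|f(z')-f(z)|\le|f(z')-g(z')|+|g(z')-g(z)|+|g(z)-f(z)|$ we get $\Lambda f\le\cM(f-g)+\Lambda g+|f-g|=\cM(f-g)+|f-g|$, so for every $\alpha>0$
\[
\{\Lambda f>\alpha\}\subset\{\cM(f-g)>\alpha/2\}\cup\{|f-g|>\alpha/2\},
\]
and the weak-$(1,1)$ bound for $\cM$ together with Markov's inequality give $|\{\Lambda f>\alpha\}|\le2(5^{4d+2}+1)\alpha^{-1}\eta$. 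Letting $\eta\to0$ shows $|\{\Lambda f>\alpha\}|=0$ for every $\alpha>0$, i.e.\ $\Lambda f=0$ a.e., which is exactly the assertion.

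The main point requiring care — and it is bookkeeping rather than genuine difficulty — is that the cylinders $Q_r(z_r)$ are not centered at $z$: the maximal function must range over all admissible off-center cylinders, and it is precisely the time-shifted enlargement $5Q$ of~\eqref{def:stacked cylinder} (not a naive hypoelliptic dilation by $5$) that makes the Vitali selection compatible with this family, through the implication established in Lemma~\ref{lem:Vitali}. The preliminary reduction to $\Omega\subset(-\infty,0)\times\R^d\times\R^d$ is what lets Lemma~\ref{lem:Vitali}, stated for cylinders lying in a time half-space, be applied with no extra hypothesis; everything else is the classical differentiation argument.
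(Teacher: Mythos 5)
Your proof is correct and follows exactly the route the paper indicates in the remark after the lemma (and attributes to \cite[Theorem~10.3]{ImbertSilvestre}): a weak $L^1$ bound for the non-centered maximal function via the Vitali covering Lemma~\ref{lem:Vitali}, then approximation by continuous functions, noting that nothing requires $z_r=z$. The paper only sketches this; your write-up supplies the same argument in detail, including the correct reduction to a time half-space so that Lemma~\ref{lem:Vitali} applies.
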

%-----------------------------------------------------------------
\begin{remark}
  In \cite[Theorem 10.3]{ImbertSilvestre}, the result is stated
  for $z_r = z$. The proof follows the classical argument: with a
  Vitali covering lemma at hand, prove first a weak $L^1$
  inequality for a maximal function and then argue by
  approximation with continuous functions.  This proof does not
  use the fact that $z_r =z$.
\end{remark}

\subsection{Weak solutions}

There are several notions of weak solutions for kinetic
Fokker-Planck equations~\eqref{e:main}.  The weakest notion
appears in \cite{auscher:hal-04519638}, it superseeds for
instance the notion introduced in
\cite{gimv,zbMATH07559605,zbMATH07750909}. It is stated for
$(x,v) \in \R^d \times \R^d$ but it can be localized by
multiplying by a cut-off function. We also give the definition in
the inhomogeneous setting ($L^2_{t,x}H^1_v$ instead of
$L^2_{t,x} \dot{H}^1_v$) in order to remove the restriction
$1 < d/2$ -- see \cite[\S~7.2]{auscher:hal-04519638}.
% ----------------------------------------------------------------------------
\begin{definition}[Weak solutions --
  \cite{auscher:hal-04519638}]
  \label{defi:weak-sol}
  Let $\mathcal{U} = (a,b) \times \Omega_x \times \Omega_v$ with
  $- \infty < a < b \le +\infty$ and $\Omega_x, \Omega_v$ two
  open sets of $\R^d$.  A function $f \colon \mathcal{U} \to \R$
  is a \emph{weak solution} of \eqref{e:main} if
  \( f \in L^2 ((a,b) \times \Omega_x, H^1 (\Omega_v)) \) and if
  $f$ satisfies \eqref{e:main} in the sense of distributions in
  $\mathcal{U}$. 
\end{definition}
%----------------------------------------------------------------------------

\subsection{Energy estimate}

We next recall the natural energy estimate associated with Eq.~\eqref{e:main}. The reader is referred to \cite{MR2068847,gimv} or \cite[Proposition 9]{zbMATH07559605} for a proof.
%---------------------------------------------------
\begin{proposition}[Energy estimate]\label{prop:EE}
Let $f$ be a weak sub-solution of \eqref{e:main} in
$Q$. Then for all cylinders $Q_r(z_0)$ and $Q_R(z_0)$ with
$0<r<R$ such that $Q_{R}(z_{0})\subset Q$,
\begin{equation*}
 \int_{Q_r(z_0)} |\nabla_v f |^2
  \le C \left(\overline{C} \int_{Q_R(z_0)} f^2 + \int_{Q_R(z_0)} S^2  \right)
\end{equation*}
where $Q_r^t (z_0) = \{(x,v)\in\R^2: (t,x,v)\in Q_r(z_0) \} $ and
$C$ only depends on the dimension $d$, $\lambda$ and $\Lambda$, and 
$\overline{C}$ only depends on $R$ and $r$.
\end{proposition}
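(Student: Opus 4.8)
The plan is to establish the estimate exactly as one proves a Caccioppoli inequality for a degenerate parabolic equation: test \eqref{e:main} against $f$ times the square of a cut-off function adapted to the kinetic geometry, then absorb the arising velocity-gradient term on the left-hand side using the lower ellipticity bound $\lambda$.

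\textbf{Step 1 (reduction and cut-off).} First I would apply the Galilean change of variables $z = z_0 \circ (s,w,u)$, that is $s = t-t_0$, $w = x - x_0 - (t-t_0)v_0$, $u = v - v_0$, under which $\partial_t + v\cdot\nabla_x$ becomes $\partial_s + u\cdot\nabla_w$, the cylinder $Q_\rho(z_0)$ becomes $\widetilde Q_\rho := (-\rho^2,0]\times B_{\rho^3}\times B_\rho$, and $\nabla_v f$ becomes $\nabla_u$ of the transported function, while $\cA(z_0\circ\,\cdot\,)$ still has eigenvalues in $[\lambda,\Lambda]$; this reduces matters to $z_0 = 0$. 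I would then fix a product cut-off $\chi = \chi_1(s)\chi_2(w)\chi_3(u)$ with values in $[0,1]$, equal to $1$ on $\widetilde Q_r$, supported in $\widetilde Q_R$, with $\chi_1 \equiv 1$ near $s = 0$ and $\chi_1 = 0$ for $s \le -R^2$, and satisfying $|\nabla_u \chi| \le \overline C(R,r)$ and $|\partial_s\chi| + |u\cdot\nabla_w\chi| \le \overline C(R,r)$ on $\widetilde Q_R$ (using $|u| \le R$ there, together with $R^3 - r^3 \ge (R-r)R^2$ and $R^2 - r^2 \ge (R-r)R$).

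\textbf{Step 2 (energy identity).} Next I would multiply \eqref{e:main}, written in the variables $(s,w,u)$, by $f\chi^2$ and integrate over $\widetilde Q_R$. Since $\partial_s + u\cdot\nabla_w$ is divergence-free in $(s,w)$, the transport term equals $\tfrac12\int_{\{s=0\}} f^2\chi^2\,dw\,du - \tfrac12\int f^2 (\partial_s + u\cdot\nabla_w)(\chi^2)$, where the first, non-negative, boundary contribution is discarded and the second is bounded by $\overline C(R,r)\int_{\widetilde Q_R} f^2$. Integrating by parts the diffusion term gives $-\int \cA\nabla_u f\cdot\nabla_u f\,\chi^2 - 2\int \cA\nabla_u f\cdot\nabla_u\chi\, f\chi$, whose first piece is $\le -\lambda\int|\nabla_u f|^2\chi^2$; the cross term, the drift term $\int\cB\cdot\nabla_u f\, f\chi^2$ and the source term $\int\cS f\chi^2$ are all handled by Young's inequality, producing at worst $\tfrac{\lambda}{2}\int|\nabla_u f|^2\chi^2$ plus lower-order contributions of the form $\tfrac{C\Lambda^2}{\lambda}\int f^2(\chi^2 + |\nabla_u\chi|^2) + \tfrac12\int\cS^2\chi^2$. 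The one genuinely delicate point is making these integrations by parts rigorous for a merely weak (sub)solution: since $(\partial_s + u\cdot\nabla_w)f = \nabla_u\cdot(\cA\nabla_u f) + \cB\cdot\nabla_u f + \cS \in L^2_{s,w}H^{-1}_u + L^2$, one regularises $f$ in all variables (or invokes the renormalisation/averaging argument already used in \cite{MR2068847,gimv}) to justify that $f\chi^2$ is an admissible test function and that only the top time-slice $\{s=0\}$ carries a boundary term, $\chi$ vanishing on the remaining ``past'' boundary.

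\textbf{Step 3 (absorption and conclusion).} Collecting Step 2, I would absorb the term $\tfrac{\lambda}{2}\int|\nabla_u f|^2\chi^2$ into the left-hand side — choosing the Young constants so that a fixed positive multiple of $\lambda\int|\nabla_u f|^2\chi^2$ survives — leaving $c\,\lambda\int|\nabla_u f|^2\chi^2 \le \tfrac{C\Lambda^2}{\lambda}\int f^2(\chi^2+|\nabla_u\chi|^2) + \overline C(R,r)\int f^2 + \tfrac12\int\cS^2$. Since $\chi \equiv 1$ on $\widetilde Q_r$ and all the $\chi$-dependent weights are bounded by $\overline C(R,r)$ and supported in $\widetilde Q_R$, reverting to the original variables and relabelling constants yields $\int_{Q_r(z_0)}|\nabla_v f|^2 \le C\big(\overline C\int_{Q_R(z_0)} f^2 + \int_{Q_R(z_0)}\cS^2\big)$ with $C = C(d,\lambda,\Lambda)$ and $\overline C = \overline C(R,r)$. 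For a sub-solution, as stated, one runs the same argument with $f$ replaced by its positive part $f_+ = \max(f,0)$, which is an admissible non-negative test weight: this controls $\int_{Q_r}|\nabla_v f_+|^2$, and the bound for $\nabla_v f$ then follows as in the cited references. I expect Step 2 — the rigorous integration by parts in the transport term and the correct bookkeeping of the boundary term at $\{s=0\}$ — to be the only part requiring real care; everything else is the classical Caccioppoli computation.
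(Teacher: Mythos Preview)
The paper does not actually prove this proposition: it is stated as a known preliminary, with the reader referred to \cite{MR2068847,gimv} and \cite[Proposition~9]{zbMATH07559605}. Your argument---reduce to $z_0=0$ by Galilean invariance, test against $f\chi^2$ with a product cut-off adapted to the kinetic cylinders, integrate by parts, and absorb the cross terms via the lower ellipticity bound and Young's inequality---is precisely the standard Caccioppoli computation carried out in those references, so the proposal is correct and there is nothing substantive to compare.

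The only loose end is your final sentence on sub-solutions: testing with $f_+\chi^2$ indeed yields the bound for $\nabla_v f_+$, but for a genuine sub-solution (an inequality in the distributional sense) there is no mechanism to recover $\nabla_v f$ on $\{f<0\}$, so ``the bound for $\nabla_v f$ then follows'' does not hold in general. In practice this does not matter here: the proposition is only invoked in the paper for weak \emph{solutions} (see the proof of Theorem~\ref{t:gain velocity gradient}), where your Step~2 applies directly without passing to $f_+$.
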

%----------------------------------------------------

\subsection{Gain of integrability of the solution}

In this subsection, we recall a result of gain of integrability
of the solution \cite[Proposition 11]{zbMATH07559605} with
$p\in [2,2+1/d)$ which is a tool for the proof of Theorem
\ref{t:gain velocity gradient}. 
%-----------------------------------------------------------------------------
\begin{proposition}[Gain of integrability of the solution]
  \label{prop:Gain sol}
  Let $f$ be a weak solution of \eqref{e:main} in $Q$.  Then for
  all cylinders $Q_r(z_0)$ and $Q_R(z_0)$ with $0<r<R$,
  $z_{0}\in Q$ such that $Q_{R}(z_{0})\subset Q$, $f$ satisfies
  \begin{equation*}
    \norm{f}_{L^{p}(Q_r(z_0))}^2\leq
    C\left(\overline{C}^2\norm{f}_{L^{2}(Q_R(z_0))}^2 +
      \overline{C}\|S\|_{L^2(Q_R(z_0))}^2 \right),
  \end{equation*}
  where $p=2+\frac{1}{d}>2$, $C$ only depends on the $d$,
  $p$, $\lambda$, $\Lambda$ and $\overline{C}$ only depends on $R$,$r$ and $v_0$.
\end{proposition}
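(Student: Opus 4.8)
\textbf{Proof plan for Proposition~\ref{prop:Gain sol}.}

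The plan is to combine the energy estimate of Proposition~\ref{prop:EE} with a kinetic Sobolev / gain-of-integrability embedding adapted to the hypoelliptic scaling $r\mapsto(r^2t,r^3x,rv)$. The heuristic is that a function $f$ which lies in $L^2_{t,x}H^1_v$ and additionally satisfies the transport equation $(\partial_t+v\cdot\nabla_x)f=\text{(something controlled)}$ — which is exactly the content of being a weak solution of~\eqref{e:main} — gains integrability in all variables, not merely in $v$. Concretely, writing $\mathfrak F:=\nabla_v\cdot(\cA\nabla_v f)+\cB\cdot\nabla_v f+\cS$, the energy estimate controls $\nabla_v f$ in $L^2$ of an intermediate cylinder in terms of $f$ and $S$ on a larger one, and $\mathfrak F$ lives (after localisation) in $L^2_{t,x}H^{-1}_v$ with the same control. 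The kinetic embedding theorem (of the type established by Bouchut, and used in \cite{gimv,zbMATH07559605}) then upgrades $L^2_{t,x}H^1_v\cap\{(\partial_t+v\cdot\nabla_x)f\in L^2_{t,x}H^{-1}_v\}$ into $L^p$ with $p=2+\tfrac1d$.

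The steps, in order, are as follows. \emph{Step 1 (localisation).} Fix an intermediate radius $\rho$ with $r<\rho<R$ and choose a smooth cut-off $\chi$ adapted to the kinetic cylinders, equal to $1$ on $Q_r(z_0)$, supported in $Q_\rho(z_0)$, with $|\nabla_v\chi|\lesssim(\rho-r)^{-1}$, $|(\partial_t+v\cdot\nabla_x)\chi|\lesssim(\rho-r)^{-2}$ (this is the natural "kinetic" derivative that the Galilean/scaling structure respects). Then $g:=\chi f$ satisfies $(\partial_t+v\cdot\nabla_x)g=\nabla_v\cdot(\cA\nabla_v g)+(\text{lower order})$, where the lower-order terms and the error from differentiating $\chi$ are all controlled in $L^2$ by $\|f\|_{L^2(Q_\rho(z_0))}$, $\|\nabla_v f\|_{L^2(Q_\rho(z_0))}$ and $\|S\|_{L^2(Q_\rho(z_0))}$. \emph{Step 2 (energy bound on the error).} Apply Proposition~\ref{prop:EE} (with radii $\rho$ and $R$) to absorb $\|\nabla_v f\|_{L^2(Q_\rho(z_0))}$ into $\overline C\|f\|_{L^2(Q_R(z_0))}$ and $\|S\|_{L^2(Q_R(z_0))}$; hence the whole right-hand side forcing term for $g$ is bounded in $L^2_{t,x}H^{-1}_v(\R^{2d+1})$ by the quantities appearing in the statement. \emph{Step 3 (kinetic embedding).} Invoke the averaging-lemma/hypoelliptic regularity estimate: for $g$ compactly supported with $g\in L^2_{t,x}H^1_v$ and $(\partial_t+v\cdot\nabla_x)g\in L^2_{t,x}H^{-1}_v$, one has $g\in L^{2+1/d}(\R^{2d+1})$ with $\|g\|_{L^{2+1/d}}\lesssim \|g\|_{L^2_{t,x}H^1_v}+\|(\partial_t+v\cdot\nabla_x)g\|_{L^2_{t,x}H^{-1}_v}$. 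Restricting to $Q_r(z_0)$, where $g=f$, and tracking the constants through Steps 1--2 gives the stated inequality, with the dependence of $\overline C$ on $R,r$ (and on $v_0$, which enters because the cut-off in the $x$-variable sees the shear $x-x_0-(t-t_0)v_0$ and translating back to fixed frame costs a factor depending on $|v_0|$).

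The main obstacle is Step 3: one must either cite precisely the form of the kinetic Sobolev embedding with the sharp exponent $2+\tfrac1d$ from \cite{zbMATH07559605} (which is the intended route, since the statement is quoted from there), or reprove it via a velocity-averaging argument — Fourier analysis in $(t,x)$, splitting frequencies $|\tau+v\cdot\xi|\lessgtr\langle\xi\rangle$, and a real-interpolation between the $\dot H^1_v$ gain and the transport gain — which is delicate to make quantitative and to localise. A secondary, more bookkeeping-type difficulty is ensuring that the cut-off procedure of Step 1 genuinely respects the kinetic geometry: the derivative that is cheap is $(\partial_t+v\cdot\nabla_x)$, not $\partial_t$ alone, so $\chi$ must be built as a function of the Galilean-invariant coordinates, and one must check that $\nabla_v\chi$ does not secretly produce an $x$-derivative of the wrong order. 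Both points are handled in the cited literature, so in the write-up I would reduce to those references after carrying out Steps 1--2 explicitly.
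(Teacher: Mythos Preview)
The paper does not actually prove this proposition: it is stated in the preliminaries as a known result and attributed to \cite[Proposition~11]{zbMATH07559605}, with no argument given. Your outline --- localise with a kinetic cut-off, use the energy estimate to control $\nabla_v f$ and hence the forcing $(\partial_t+v\cdot\nabla_x)g$ in $L^2_{t,x}H^{-1}_v$, then invoke the Bouchut-type kinetic embedding to reach $L^{2+1/d}$ --- is exactly the standard route used in that reference and in \cite{MR2068847,gimv}, and you have correctly identified that the only substantive ingredient to cite is the embedding itself. So your proposal is correct and matches the intended (cited) proof; there is nothing further to compare.
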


\section{Proof of Gehring's Lemma}
\label{s:reversed}

\subsection{Localization} 
\label{sub:localization}

In order to deal with the boundary of $Q_\gamma$, we use a
localisation function in the spirit of Iwaniec's work
\cite{Iwaniec98} for the elliptic case.  We define for
$z=(t,x,v) \in \R^{1+2d}$,
\begin{equation*}
  \zeta(t,x,v)= \frac{1}{2}\min\left(\frac{(\gamma-|v|)_+}5,
    \left(\frac{(\gamma^2+t)_+}{{ 13}}\right)^{1/2},
    \left(\frac{(\gamma^3-|x|)_+}{{  25} \gamma}\right)^{1/2}
  \right).
\end{equation*}
%---------------------------------
Let us first give a useful estimate on $\zeta$ in $ Q_{r}(z_0)$.
\begin{lemma}[Estimate on $\zeta$]
  \label{l:zeta_estimate}
 If $z\in Q_{r}(z_0)\subset Q_{\gamma}$ for $r>0$ and $z_0\in Q_{\gamma}$ then $|\zeta(z)-\zeta(z_0)|\leq   \frac{r}{2}$.
\end{lemma}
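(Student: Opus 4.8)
The plan is to estimate each of the three terms inside the $\min$ defining $\zeta$ separately, using that the $1$-Lipschitz (after the scaling) structure of each argument interacts well with the kinetic cylinder geometry, and then to invoke the elementary fact that $x \mapsto \min(a_1(x),a_2(x),a_3(x))$ is Lipschitz with constant the maximum of the Lipschitz constants of the $a_i$, so it suffices to show each argument of the $\min$ changes by at most $r$ between $z$ and $z_0$ (the prefactor $\tfrac12$ then gives $\tfrac r2$). Write $z=(t,x,v)$ and $z_0=(t_0,x_0,v_0)$ with $z \in Q_r(z_0)$, so that $-r^2 < t-t_0 \le 0$, $|v-v_0| < r$, and $|x - x_0 - (t-t_0)v_0| < r^3$.

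For the velocity term $\frac{(\gamma-|v|)_+}{5}$: since $(\cdot)_+$ and $|\cdot|$ are $1$-Lipschitz, $\big|(\gamma-|v|)_+ - (\gamma-|v_0|)_+\big| \le |v - v_0| < r$, hence this argument changes by at most $r/5 \le r$. For the time term $\big(\frac{(\gamma^2+t)_+}{13}\big)^{1/2}$: here I would use that $s \mapsto \sqrt{s_+}$ is not Lipschitz globally, so instead I exploit $|(\gamma^2+t) - (\gamma^2+t_0)| = |t-t_0| < r^2$ together with the inequality $|\sqrt{a_+} - \sqrt{b_+}| \le \sqrt{|a-b|}$ (valid for all real $a,b$), giving a change of at most $\sqrt{r^2/13} = r/\sqrt{13} \le r$. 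For the position term $\big(\frac{(\gamma^3-|x|)_+}{25\gamma}\big)^{1/2}$: similarly it suffices to bound $\big||x| - |x_0|\big| \le |x - x_0|$, and then $|x-x_0| \le |x - x_0 - (t-t_0)v_0| + |t-t_0|\,|v_0| < r^3 + r^2 \gamma$, since $z_0 \in Q_\gamma$ forces $|v_0| < \gamma$ and $|t-t_0| < r^2$. Using $r \le ?$ — here one must be a little careful: a priori $r$ could be large, but $Q_r(z_0) \subset Q_\gamma$ forces $r \le 2\gamma$ (comparing velocity extents, $|v_0|+r \le$ something like $\gamma$ actually gives $r < \gamma$, or at worst $r \le 2\gamma$ from the time extent $r^2 \le \gamma^2 + $ shift considerations). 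I expect in fact $r^2 \le \gamma^2$ from the time constraint, hence $r \le \gamma$. Then $|x-x_0| < r^3 + r^2\gamma \le r^3 + r^2 \gamma$, and with $r \le \gamma$ this is $\le r^2(r + \gamma) \le r^2 \cdot 2\gamma$... I would instead write $|x-x_0| \le r^3 + r^2 \gamma \le r^2 \gamma(1 + r/\gamma) \le 2 r^2 \gamma$ wait that's not $\le r \cdot$ something clean. Better: divide by $25\gamma$ inside, so the change in the position argument is at most $\sqrt{\frac{r^3 + r^2\gamma}{25\gamma}} = \frac{r}{5}\sqrt{\frac{r}{\gamma} + 1} \le \frac{r}{5}\sqrt{2} \le r$, using $r \le \gamma$.

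The main obstacle is simply pinning down the a priori bound $r \le \gamma$ (or whatever constant is needed) from the hypothesis $Q_r(z_0) \subset Q_\gamma$ with $z_0 \in Q_\gamma$, and making sure the constants $5, 13, 25\gamma$ in the definition of $\zeta$ are exactly the ones that absorb the geometric cross-terms (the $+r^2\gamma$ from $|t-t_0|\,|v_0|$, the $\sqrt{2}$ losses, etc.); this is precisely why those specific constants were chosen, so the computation should close cleanly. Assembling: each of the three arguments of the $\min$ differs by at most $r$ between $z$ and $z_0$, so $\min$ of them differs by at most $r$, and multiplying by $\tfrac12$ gives $|\zeta(z) - \zeta(z_0)| \le \tfrac r2$, as claimed.
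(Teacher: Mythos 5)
Your proof is correct and follows essentially the same route as the paper's: term-by-term estimates on the three arguments of the $\min$, with the square-root subadditivity (your $|\sqrt{a_+}-\sqrt{b_+}|\le\sqrt{|a-b|}$) handling the $t$ and $x$ terms, and the containment $Q_r(z_0)\subset Q_\gamma$ supplying $r\le\gamma$ and $|v_0|\le\gamma$ to absorb the cross term $|t-t_0|\,|v_0|\le r^2\gamma$. The point you hedged on, $r\le\gamma$, does indeed follow from the time extent $(t_0-r^2,t_0]\subset(-\gamma^2,0]$ (or from the velocity extent), exactly as you guessed, and it is used implicitly in the paper's computation as well.
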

%----------------------------------
\begin{proof}
 The idea guiding the calculation is that $\zeta$ increases with the
  distance to the parabolic boundary $\partial_p Q_\gamma$ of $Q_\gamma$. Hence,
when $z_0$ is not too far from $\partial_p Q_\gamma$ and since $z$ is close to $z_0$, so is
  $z$.
Here is the detailed calculation for $z =(t,x,v)$:
 \begin{align*}
    & \frac{(\gamma -|v|)_+}{5} \le \frac{(\gamma -|v_0|)_+}{5} + \frac{r}5 \\
    & \left(\frac{(\gamma^2+t)_+}{{ 13}}\right)^{\frac12}
     \le \left(\frac{(\gamma^2+t_0)_++r^2}{{ 13}}\right)^{\frac12} \le
      \left(\frac{(\gamma^2+t_0)_+}{{ 13}}\right)^{\frac12}+\frac{r}{ 
      \sqrt{13}}
  \end{align*}
   and finally 
  \begin{align*}
    \left(\frac{(\gamma^3 -|x|)_+}{{  25} \gamma}\right)^{\frac12}
    & \le \left(\frac{(\gamma^3 -|x_0|)_+}{{  25} \gamma} + \frac{r^3 +
      |v_0|r^2}{{  25} \gamma} \right)^{\frac12} \\
    & \le \left( \frac{(\gamma^3 -|x_0|)_+}{{  25}\gamma} +
      \frac{{ 2} r^2}{{  25}}\right)^{\frac12} \qquad \text{(we used that
      $|v_0|\le \gamma$ and $r \le \gamma$)} \\
    & \le \left( \frac{(\gamma^3 -|x_0|)_+}{{  25}
      \gamma}\right)^{\frac12} + {  \frac{\sqrt{2}}{5}}r,
  \end{align*}
  which gives,
  \begin{align*}
  \zeta(z) \leq \zeta(z_0) +\frac{r}{  5 \sqrt 2 }.
  \end{align*}
  A similar computation gives 
   \(
  \zeta(z) \geq \zeta(z_0) -\frac{r}{  5 \sqrt 2}.
  \)
\end{proof}
%-----------------------------------
%---------------------------------
\begin{lemma}[Shifted and scaled cylinders]
  \label{l:shifted}
  If $z_0=(t_0,x_0,v_0)\in Q_{\gamma}$  and $r \leq 2\zeta(z_0)$, then $5 Q_{r}(z_0)\subset Q_{\gamma}$.
\end{lemma}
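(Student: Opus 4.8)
The plan is to unwind the definition \eqref{def:stacked cylinder} of $5Q_r(z_0)$ and check, coordinate by coordinate, that every point of it lies in $Q_\gamma$, using the bound $r \le 2\zeta(z_0)$ together with the three explicit terms in the definition of $\zeta(z_0)$. Recall that $5Q_r(z_0)$ consists of points $z=(t,x,v)$ with $|v-v_0| < 5r$, with $-25r^2 + \tau_{t_0,r} < t - t_0 \le \tau_{t_0,r}$ where $\tau_{t_0,r} = \min(-t_0, 12r^2)$, and with $|x - x_0 - (t-t_0)v_0| < (5r)^3$. Membership in $Q_\gamma = (-\gamma^2, 0] \times B_{\gamma^3} \times B_\gamma$ requires $|v| < \gamma$, $-\gamma^2 < t \le 0$, and $|x| < \gamma^3$. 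So there are three things to verify, and each is controlled by the corresponding piece of $\zeta(z_0)$; the constants $5$, $13$, $25$ inside $\zeta$ are evidently chosen precisely so that this works.

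First, the velocity bound: from $|v - v_0| < 5r \le 10\zeta(z_0) \le (\gamma - |v_0|)_+$ (using $\zeta(z_0) \le \tfrac12\cdot\tfrac{(\gamma-|v_0|)_+}{5}$) we get $|v| < \gamma$. Second, the time bound: the upper end is $t \le t_0 + \tau_{t_0,r} \le t_0 + (-t_0) = 0$, which is immediate. For the lower end, $t > t_0 - 25 r^2 + \tau_{t_0,r} \ge t_0 - 25 r^2$, and we need $t_0 - 25 r^2 \ge -\gamma^2$, i.e. $\gamma^2 + t_0 \ge 25 r^2$; but also $t_0 \le 0$ forces $-25 r^2 + \tau_{t_0,r} = -25r^2 + \min(-t_0,12r^2)$, so when $-t_0 \le 12 r^2$ the lower end is simply $t_0 - 25r^2 + (-t_0)\cdot$... — here one must be slightly careful and split into the two cases $\tau_{t_0,r} = -t_0$ and $\tau_{t_0,r} = 12 r^2$. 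In the first case $t > -25r^2$ and we need $25 r^2 \le \gamma^2$, which follows from $r \le 2\zeta(z_0) \le \gamma/5$ (wait — this uses only the $v$-term bound $2\zeta(z_0)\le (\gamma-|v_0|)_+/5 \le \gamma/5$, giving $25r^2 \le \gamma^2$). In the second case $\gamma^2 + t_0 \ge 12 r^2$ must be derived from $(\gamma^2 + t_0)_+ \ge 13 r^2$, which is exactly $2\zeta(z_0) \le ((\gamma^2+t_0)_+/13)^{1/2}$ squared; note here $(\gamma^2+t_0)_+ > 0$ since otherwise $\zeta(z_0)=0$ and $r\le 0$, excluded. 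Third, the position bound: write $x = x_0 + (t-t_0)v_0 + w$ with $|w| < (5r)^3 = 125 r^3$, so $|x| \le |x_0| + |t - t_0|\,|v_0| + 125 r^3$. Here $|t - t_0| \le \max(25 r^2 - \tau_{t_0,r}, \tau_{t_0,r}) \le 25 r^2$ and $|v_0| \le \gamma$, so $|x| \le |x_0| + 25 \gamma r^2 + 125 r^3 \le |x_0| + 25\gamma r^2 + 125 \gamma r^2 \le |x_0| + 150 \gamma r^2$ using $r \le \gamma$; we then need $150 \gamma r^2 \le (\gamma^3 - |x_0|)_+$. From $2\zeta(z_0) \le ((\gamma^3 - |x_0|)_+/(25\gamma))^{1/2}$ we get $4 r^2 \le (\gamma^3-|x_0|)_+/(25\gamma)$, i.e. $100 \gamma r^2 \le (\gamma^3-|x_0|)_+$ — which is a factor $3/2$ short of $150$. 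So either the bound $|w|<125r^3$ should be paired with the sharper $|t-t_0|\le 25r^2 - \tau_{t_0,r}$ on the relevant side, or one sharpens $125 r^3 \le 125 \gamma r^2 / \gamma \cdot$... — in any case the intended estimate is $|x - x_0 - (t-t_0)v_0| + |t-t_0||v_0| \le$ something like $41 r^3 + 25\gamma r^2$ type bound as in the Vitali lemma proof, and one reconciles constants so that the total is $\le 100 \gamma r^2 \le (\gamma^3-|x_0|)_+$.

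The main obstacle is this last (position) estimate: it is the only place where three different scalings ($r$, $r^2$, $r^3$) and the Galilean shift all interact, and getting the numerical constant to fit under the $25\gamma$ appearing in $\zeta$ requires either using $r \le \gamma$ to trade $r^3$ for $\gamma r^2$ efficiently, or exploiting that on the side where $|t - t_0|$ is largest the other terms are correspondingly smaller. Everything else is a direct substitution. I would organize the write-up as: (i) recall what must be shown; (ii) dispatch $v$ and the easy ($t \le 0$) half of the time bound; (iii) do the lower time bound with the two-case split on $\tau_{t_0,r}$; (iv) do the position bound carefully, invoking $|v_0|\le\gamma$ and $r\le\gamma$ and $r\le2\zeta(z_0)$, as the final and most delicate step.
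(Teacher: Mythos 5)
Your overall structure (coordinate-by-coordinate verification, with each coordinate controlled by the corresponding branch of $\zeta$) is the same as the paper's, and your treatment of the velocity bound and of the time bound, including the case split on $\tau_{t_0,r}$, is correct. The problem is the position estimate, which you yourself identify as the main obstacle and then leave unresolved. Two things go wrong there. First, an arithmetic slip: from $r\le 2\zeta(z_0)\le ((\gamma^3-|x_0|)_+/(25\gamma))^{1/2}$ you get $r^2\le (\gamma^3-|x_0|)_+/(25\gamma)$, i.e.\ $25\gamma r^2\le(\gamma^3-|x_0|)_+$, not $100\gamma r^2\le(\gamma^3-|x_0|)_+$; your available budget is four times smaller than you claim. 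Second, neither of the reconciliations you sketch closes the gap: using the sharper $r\le 2\zeta(z_0)\le\gamma/5$ gives $125r^3\le 25\gamma r^2$, so the total to be absorbed is $25\gamma r^2+25\gamma r^2=50\gamma r^2$, still twice the budget $25\gamma r^2$; and the bound $|t-t_0|<25r^2$ cannot be improved on the relevant side (it is essentially attained when $\tau_{t_0,r}=0$), so there is no slack to be gained from the time variable.

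The missing idea is to feed the \emph{velocity} branch of $\zeta$ into the position estimate: $r\le 2\zeta(z_0)\le(\gamma-|v_0|)_+/5$ gives $|v_0|\le\gamma-5r$, hence
\begin{equation*}
  |t-t_0|\,|v_0|+(5r)^3 < 25r^2(\gamma-5r)+125r^3 = 25\gamma r^2 \le (\gamma^3-|x_0|)_+,
\end{equation*}
and the constants match exactly: the $-125r^3$ produced by $|v_0|\le\gamma-5r$ cancels the $(5r)^3$ term. (For what it is worth, the paper's own write-up of this step drops the $(5r)^3$ term and uses only $|v_0|\le\gamma$; you were right to sense that the naive accounting does not close, but a complete proof needs the cancellation above rather than a promise to ``reconcile constants''.)
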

%--------------------------------
\begin{proof}
  Let $r \le \zeta_0 :=2\zeta(z_0)$. The
  condition $z \in 5 Q_r (z_0)$ is equivalent to,
  \begin{equation*}
    \begin{cases}
      |v-v_0| < 5r, \\
      -25 r^2+\tau_{t_0,r} < t-t_0 \le \tau_{t_0,r}, \\
      |x-x_0 - (t-t_0) v_0 | < (5r)^3.  
    \end{cases}
  \end{equation*}
  Then we have first $|v| < |v_0|+ 5r$ and the condition
  $r \le \zeta_0 \le (\gamma - |v_0|)_+/5$ implies
  $|v| < \gamma$. Second we have $t \le t_0 + \min(-t_0,12r^2)$ which implies
  $t \le 0$.

  Third we have $t > t_0 - 25 r^2+\min(-t_0,12r^2)$ and the condition
  $r^2 \le \zeta_0 ^2 \le \min((\gamma^2 + t_0)/13,\gamma^2/25)$ implies
  $t > -\gamma^2$. Indeed, if $-t_0 \le 12 r^2$, then $t > -25 r^2 \ge - \gamma^2$.
  And if $-t_0 \ge 12 r^2$, then $t > t_0 - 13 r^2 \ge - \gamma^2$.
  Remark that this implies that $|t-t_0 | < 25 r^2$.

  Fourth and last we have
  $|x| < |x_0| + |t-t_0| |v_0| \le |x_0| +   25 r^2 \gamma$ and the
  condition
  $r^2 \le \zeta_0 ^2 \le (\gamma^3-|x_0|)_+/(  25 \gamma)$ implies
  $|x| < \gamma^3$.
  Hence the conclusion $5 Q_{r}(z_0)\subset Q_{\gamma}$ holds.
\end{proof}

With such a localization function and such cylinders at hand, we
scale the functions $g$ and $h$.
%-------------------------------------------------------------------------

\begin{lemma}[Localization]
  \label{l:localization}
  Given $\gamma>1$ and non-negative functions
  $g \in L^q(Q_{\gamma})$ and $h \in
  L^\sigma(Q_{\gamma})$ for some $\sigma>q >1$ such that there is $b
  >1$ and $\theta \in (0,1)$ so that for all $R>0$ and
  $Q_{\gamma R}(z_0) \subset
  Q_{\gamma}$ the inequality~\eqref{eq:reverse-holder} holds. Let
  $\bg$ and $\bh$ be defined on $\R \times \R^d \times
  \R^d$ by the following formulas,
  \begin{equation*}
    \bg (z):=\frac{| Q_{\zeta
        (z)}|^{\frac1q}}{C_0}\frac{g(z)}{\norm{g}_{L^{q}(
        Q_{\gamma})}}, \qquad \bh (z) := \frac{| Q_{\zeta
        (z)}|^{\frac1q}}{C_0}\frac{h(z)}{\norm{g}_{L^{q}(
        Q_{\gamma})}}
  \end{equation*}
  with $C_0 = 2^{\frac{4d+2}q}$.
  \begin{itemize}
  \item[(i)] The function $\bg$ satisfies, for all   $z_0\in Q_{\gamma}$ and $r> (2/3) \zeta (z_0)$ such that
    $Q_r (z_0) \subset Q_\gamma$,
    \begin{equation}
      \label{cond initial normalisation}
      \fint_{\, Q_r(z_0)} \bg^q \leq 1. 
    \end{equation}
  \item[(ii)] The functions $\bg$ and $\bh$ satisfy, for all
    $z_0=(t_0,x_0,v_0)\in Q_{\gamma}$ and $r\in (0,\zeta (z_0))$,
    \begin{align}
      \label{eq:holdinv}
      \fint_{Q_{r}(z_0)} \bg^q  \le \bar b \left\{ \left(
      \fint_{Q_{\gamma r}(z_0)} \bg  \right)^q
      +\fint_{Q_{\gamma r}(z_0)} \bh^q  \right\}+ \bar
      \theta\fint_{Q_{\gamma r}(z_0)} \bg^q 
    \end{align}
    with $\bar b = C b$ and $\bar \theta = C \theta$ with $C=3^{4d+2}$.
\end{itemize}
\end{lemma}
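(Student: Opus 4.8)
The strategy is to verify the two properties directly from the definitions of $\bg,\bh$ and the reverse Hölder inequality~\eqref{eq:reverse-holder}, using the volume scaling $|Q_r| = c_d r^{4d+2}$ and the Lipschitz-type estimate on $\zeta$ from Lemma~\ref{l:zeta_estimate}. The key mechanism is that on a cylinder $Q_r(z_0)$ the function $\zeta$ oscillates by at most $r/2$, so if $r$ is comparable to $\zeta(z_0)$ then $\zeta$ is comparable to $\zeta(z_0)$ throughout $Q_r(z_0)$ (and similarly on $Q_{\gamma r}(z_0)$ when $r<\zeta(z_0)$, since $\gamma r < \zeta(z_0)$ would be false — so I must be careful here: the relevant bound in (ii) is $r<\zeta(z_0)$, and one still needs control of $\zeta$ on the larger cylinder $Q_{\gamma r}(z_0)$, which is where the factor $C=3^{4d+2}$ will come from).

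\emph{Step 1: prove (i).} Fix $z_0 \in Q_\gamma$ and $r > (2/3)\zeta(z_0)$ with $Q_r(z_0)\subset Q_\gamma$. For $z \in Q_r(z_0)$, Lemma~\ref{l:zeta_estimate} gives $\zeta(z) \le \zeta(z_0) + r/2 < \tfrac32 r + \tfrac12 r = 2r$, hence $|Q_{\zeta(z)}| \le |Q_{2r}| = 2^{4d+2}|Q_r|$. Therefore
\[
\int_{Q_r(z_0)} \bg^q = \frac{1}{C_0^q \norm{g}_{L^q(Q_\gamma)}^q}\int_{Q_r(z_0)} |Q_{\zeta(z)}|\, g(z)^q\, dz \le \frac{2^{4d+2}|Q_r|}{C_0^q \norm{g}_{L^q(Q_\gamma)}^q}\int_{Q_r(z_0)} g^q \le |Q_r|,
\]
using $C_0^q = 2^{4d+2}$ and $\int_{Q_r(z_0)} g^q \le \norm{g}_{L^q(Q_\gamma)}^q$. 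Dividing by $|Q_r|$ yields~\eqref{cond initial normalisation}.

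\emph{Step 2: prove (ii).} Fix $z_0=(t_0,x_0,v_0)\in Q_\gamma$ and $r \in (0,\zeta(z_0))$. First I record that $Q_{\gamma r}(z_0)\subset Q_\gamma$: indeed $r \le \zeta(z_0) \le 2\zeta(z_0)$, and even $\gamma r \le 2\zeta(z_0)$ need not hold, so I instead observe $r < \zeta(z_0)$ implies via the explicit formula for $\zeta$ that $5r < \gamma-|v_0|$, $13 r^2 < \gamma^2+t_0$ (when $t_0 > -\gamma^2$; the boundary case is handled since then $\zeta(z_0)$ is forced small) and $25\gamma r^2 < \gamma^3 - |x_0|$ — wait, this needs rechecking against Lemma~\ref{l:shifted}; the cleaner route is to apply Lemma~\ref{l:shifted} with radius $r/2 < \zeta(z_0) = \tfrac12\zeta_0$ is not enough for $\gamma r$. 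I will instead directly verify $Q_{\gamma r}(z_0)\subset Q_\gamma$ from $r<\zeta(z_0)$ using the same computations as in Lemma~\ref{l:shifted} but noting $\gamma \le$ (the constants $5,\sqrt{13},5\sqrt{\gamma}$ built into $\zeta$ are chosen precisely to absorb $\gamma$) — this is the arithmetic check that makes the constants $5,13,25$ in $\zeta$ work. Granting this, apply~\eqref{eq:reverse-holder} to $g,h$ on $Q_r(z_0) \subset Q_{\gamma r}(z_0) \subset Q_\gamma$. Then convert each average of $g,h$ into an average of $\bg,\bh$: for $z \in Q_{\gamma r}(z_0)$, Lemma~\ref{l:zeta_estimate} (applied on $Q_{\gamma r}(z_0)$, using $\gamma r \le 2\zeta(z_0)$ which follows once we know $Q_{\gamma r}(z_0)\subset Q_\gamma$ combined with... actually I use $r<\zeta(z_0)$ to get $|\zeta(z)-\zeta(z_0)| \le \gamma r/2$, hence for the ratio I need $\gamma r/2 < \zeta(z_0)$, i.e.\ $\gamma r < 2\zeta(z_0)$) gives $\tfrac13\zeta(z_0) \le \tfrac{2}{3}\zeta(z_0) \le \zeta(z) \le \tfrac{4}{3}\zeta(z_0)$ — let me just say $\zeta(z)/\zeta(z_0) \in [1/3,3]$ holds on $Q_{\gamma r}(z_0)$ provided $\gamma r < 2\zeta(z_0)$, which I will ensure. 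Consequently $|Q_{\zeta(z)}| \le 3^{4d+2}|Q_{\zeta(z_0)}|$ and $|Q_{\zeta(z)}| \ge 3^{-(4d+2)}|Q_{\zeta(z_0)}|$ throughout. Multiplying~\eqref{eq:reverse-holder} through by $|Q_{\zeta(z_0)}|/(C_0^q\norm{g}_{L^q(Q_\gamma)}^q)$ and using these two-sided bounds to replace $g^q \leftrightarrow \bg^q$, $h^q \leftrightarrow \bh^q$, $g \leftrightarrow \bg$ inside the averages, each such substitution costs at most a factor $3^{4d+2}$; the worst term (the $\theta$ term, or the $(\fint g)^q$ term, each losing one factor $3^{4d+2}$) gives the stated $\bar b = Cb$, $\bar\theta = C\theta$ with $C = 3^{4d+2}$.

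\emph{Main obstacle.} The only delicate point is the book-keeping of where the factor $3^{4d+2}$ is spent: one must check that when passing from~\eqref{eq:reverse-holder} to~\eqref{eq:holdinv} no term accumulates more than one power of $3^{4d+2}$, which requires being slightly clever — on the right-hand side the averages of $\bg^q,\bh^q$ and the $q$-th power of the average of $\bg$ each pick up at most $(3^{4d+2})^{?}$; the $(\fint \bg)^q$ term naively loses $q$ powers of $3$ per linear average, i.e.\ $3^{(4d+2)q}$ — so in fact $C$ should be taken as $3^{(4d+2)q}$ or one bounds $(\fint_{Q_{\gamma r}} g)^q$ more carefully by Jensen/Hölder to keep it to $3^{4d+2}$; resolving this cleanly (and matching the claimed $C=3^{4d+2}$, which suggests the intended argument only ever compares $q$-averages, handling the linear term via $(\fint g)^q \le \fint g^q$ before rescaling) is the one spot needing care. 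The secondary chore is verifying $Q_{\gamma r}(z_0)\subset Q_\gamma$ and $\gamma r < 2\zeta(z_0)$ from $r < \zeta(z_0)$, which is exactly why the numerical constants $5,13,25\gamma$ appear in the definition of $\zeta$; this is a routine but essential arithmetic check.
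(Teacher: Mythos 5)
Your overall strategy is the paper's: part (i) via the pointwise bound $\zeta(z)\le 2r$ on $Q_r(z_0)$, and part (ii) via two-sided comparability of $|Q_{\zeta(z)}|$ with $|Q_{\zeta(z_0)}|$ followed by a term-by-term conversion of \eqref{eq:reverse-holder}. Your Step~1 is complete and correct. However, the point you single out as the ``main obstacle'' in Step~2 is in fact not one, and your proposed repair would break the argument. Because $\bg$ is defined with the factor $|Q_{\zeta(z)}|^{1/q}$ (not $|Q_{\zeta(z)}|$), substituting $g=C_0\|g\|_{L^q(Q_\gamma)}|Q_{\zeta(z)}|^{-1/q}\,\bg$ into the \emph{linear} average costs a factor $\bigl(\zeta(z_0)/\min\zeta\bigr)^{(4d+2)/q}$, and raising to the power $q$ returns exactly one factor $\bigl(\zeta(z_0)/\min\zeta\bigr)^{4d+2}$ --- the same single power of the ratio as for the $q$-averages. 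There is no accumulation of $q$ powers, and no need to invoke $(\fint g)^q\le\fint g^q$; doing that on the right-hand side would discard precisely the gap between the linear average and the $q$-average that Gehring's lemma exploits. (For the record, the paper reaches $C=3^{4d+2}$ by using the upper bound $\zeta(z)\le\tfrac32\zeta(z_0)$ on $Q_r(z_0)$ for the left-hand side, contributing $(3/2)^{4d+2}$, and the lower bound $\zeta(z)\ge\tfrac12\zeta(z_0)$ on the right-hand terms, contributing $2^{4d+2}$; your symmetric ratio $[1/3,3]$ would yield $9^{4d+2}$.)

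The genuine gaps are the two checks you defer. First, to invoke \eqref{eq:reverse-holder} at radius $r$ you must show $Q_{\gamma r}(z_0)\subset Q_\gamma$ from $r<\zeta(z_0)$; you acknowledge this but never carry out the arithmetic (it is the analogue of Lemma~\ref{l:shifted} with the factor $5$ replaced by $\gamma$). Second, and more seriously, the lower bound $\zeta(z)\ge\tfrac12\zeta(z_0)$ is needed on the \emph{large} cylinder $Q_{\gamma r}(z_0)$, where Lemma~\ref{l:zeta_estimate} only gives $|\zeta(z)-\zeta(z_0)|\le\gamma r/2$; combined with $r<\zeta(z_0)$ this yields $\zeta(z)\ge(1-\gamma/2)\zeta(z_0)$, which is useless for $\gamma\ge 2$. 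You notice that you need something like $\gamma r<2\zeta(z_0)$ and promise to ``ensure'' it, but it does not follow from $r<\zeta(z_0)$ for general $\gamma>1$: either the constant $C$ must be allowed to depend on $\gamma$, or the admissible range of $r$ must be shrunk. (The paper's own proof is terse on exactly this point --- it states the comparability only for $z\in Q_r(z_0)$ and then applies it on $Q_{\gamma r}(z_0)$ --- so this is a loose end you share with the authors rather than a wrong turn; but as written your argument does not close it.)
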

%-------------------------------------------------------------------------
\begin{proof}
  To prove (i), we estimate the mean of $\bg^q$ on $Q_r (z_0)$
  with $r>(2/3) \zeta (z_0)$,
  \begin{align*}
    \fint_{Q_r(z_0)} \bg^q
    & = \frac1{C_0^q} \frac1{\|g\|^q_{L^q(Q_\gamma)}}\int_{Q_r
      (z_0)} \frac{|Q_{\zeta (z)}|}{|Q_r (z_0)|}  g(z)^q \dz \\
    & =  \frac1{\|g\|^q_{L^q(Q_\gamma)}}\int_{Q_r (z_0)}
      \frac1{C_0^q} \left(\frac{\zeta (z)}{r}\right)^{4d+2}
      g(z)^q \dz.
  \end{align*}
  in order to obtain~\eqref{cond initial normalisation}, it is enough to
  check that we can choose $C_0>0$ such that
  \begin{equation*}
    \forall z \in Q_r (z_0), \quad \frac1{C_0^q}\left(\frac{\zeta
        (z)}{r}\right)^{4d+2} \le 1.
  \end{equation*}
  If we pick $C_0 = 2^{\frac{4d+2}q}$, the previous inequality holds true as soon as
  \begin{equation}
    \label{e:zeta-r}
    \forall z \in Q_r (z_0), \quad \zeta (z)  \le 2 r. 
  \end{equation}  
  It is a consequence of the fact that $r > (2/3) \zeta (z_0)$, $z \in Q_r (z_0)$ and Lemma \ref{l:zeta_estimate} since  it implies that
    for all $z \in Q_r (z_0) \subset Q_\gamma$, we have $\zeta (z)    \le \zeta(z_0) + r/2$.

  \bigskip
  We now check \eqref{eq:holdinv}.  
 We first remark thanks to Lemma \ref{l:zeta_estimate} that for $z\in Q_r(z_0)$ when $r\leq \zeta (z_0)$, we get $\frac{\zeta(z_0)}{2}\leq\zeta(z) \leq \frac{3\zeta(z_0)}{2}$ . 
   This implies
  \begin{equation}
    \label{e:0}
    C_0^q \|g\|_{L^q (Q_\gamma)}^q \fint_{Q_r (z_0)} \bg^q \le
    \left(\frac{3}{2}\right)^{4d+2}|Q_{\zeta(z_0)}| \fint_{Q_r (z_0)} g^q,
  \end{equation}
  \begin{align}
     \noindent
    \fint_{Q_{\gamma r}(z_0)} g^q 
    \label{e:1}
    & \le   2^{4d+2} 
      \|g\|_{L^q(Q_\gamma)}^q \frac{C_0^q}{|Q_{\zeta(z_0)}|} \fint_{Q_{\gamma r}(z_0)}
      \bg^q.   
  \end{align}
    The same computation yields,
  \begin{equation}
    \label{e:2}
    \fint_{Q_{\gamma r}(z_0)} h^q  \le 2^{4d+2}
    \|g\|_{L^q(Q_\gamma)}^q  \frac{C_0^q}{|Q_{\zeta(z_0)}|} \fint_{Q_{\gamma r}(z_0)} \bh^q. 
  \end{equation}
  We next estimate the mean of $g$,
  % \begin{align*}
  %   \fint_{Q_{\gamma r}(z_0)} g 
  %   & \le  2^{\frac{4d+2}q} \|g\|_{L^q(Q_\gamma)}  \frac{C_0}{|Q_{\zeta(z_0)}|^{1/q}}
  %     \fint_{Q_{\gamma r} (z_0)} \bg .
  % \end{align*}
  % In particular,
  \begin{equation}
    \label{e:3}
    \left(\fint_{Q_{\gamma r}(z_0)} g \right)^q \le
    2^{4d+2}
    \|g\|_{L^q(Q_\gamma)}^q \frac{C_0^q}{|Q_{\zeta(z_0)}|}\left(\fint_{Q_{\gamma r}
        (z_0)} \bg \right)^q.
  \end{equation}
  Combining \eqref{eq:reverse-holder} with \eqref{e:0},
  \eqref{e:1}, \eqref{e:2} and \eqref{e:3} yields \eqref{eq:holdinv} with $\bar b = C b$ and
  $\bar \theta = C \theta$ and $C=3^{4d+2}$.
\end{proof}

\subsection{Covering of the superlevel sets of $\bg$}

%--------------------------------------
\begin{lemma}[Covering of superlevel sets]
  \label{l:covering}
  Given $\gamma>1$ and a non-negative function
  $g \in L^q(Q_{\gamma})$ for some $q >1$ and $\bg$ be defined on
  $\R \times \R^d \times \R^d$ from $g$ as in
  Lemma~\ref{l:localization}. Let $s>1$ and consider the union
  $\cG$ of all cylinders $ Q_i\subset Q_\gamma$ such that
  $Q_i= Q_r (z_0)$ with $z_0\in Q_{\gamma}$  and   $r\leq \zeta (z_0)$ and such that,
  \begin{equation}
    \label{hyp des cylindres Qi}
    \fint_{5 Q_i} \bg^q < s^q \leq \fint_{ Q_i} \bg^q.
  \end{equation}
  Then we have: $\{\bg > s\}\cap Q_\gamma \subset \cG$, up to a set of measure
  zero.
\end{lemma}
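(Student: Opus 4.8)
The plan is to prove the set inclusion $\{\bg > s\} \cap Q_\gamma \subset \cG$ up to null sets by showing that Lebesgue-almost every point $z$ of the superlevel set $\{\bg > s\}$ is contained in some cylinder $Q_i$ belonging to the family defining $\cG$. First I would fix a Lebesgue point $z = (t,x,v)$ of $\bg^q$ in the sense of Lemma~\ref{lem: lebesgue diff} (applied with the kinetic cylinders allowed to be re-centered, as in the Remark following it) which also satisfies $\bg(z) > s$; by Lebesgue differentiation, almost every point of $\{\bg>s\}\cap Q_\gamma$ has this property. At such a point, for a family of radii $r \to 0^+$ and cylinders $Q_r(z_r) \ni z$ shrinking around $z$, the averages $\fint_{Q_r(z_r)} \bg^q$ converge to $\bg(z)^q > s^q$. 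So for $r$ small enough we have $\fint_{Q_r(z_r)} \bg^q > s^q$.

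Next I would run a continuity/stopping-time argument on the radius. On the large end, the key input is property (i) of Lemma~\ref{l:localization}: whenever $r > (2/3)\zeta(z_0)$ and $Q_r(z_0) \subset Q_\gamma$, one has $\fint_{Q_r(z_0)} \bg^q \le 1 \le s^q$ (using $s>1$). Combined with Lemma~\ref{l:shifted}, which guarantees $5Q_r(z_0) \subset Q_\gamma$ as soon as $r \le 2\zeta(z_0)$, this means that for radii in the admissible range the average over the enlarged cylinder $5Q_r(z_0)$ cannot stay above $s^q$ all the way up — indeed $5Q_r(z_0)$ is itself a kinetic cylinder $Q_{5r}(\cdot)$ with $5r > (2/3)\zeta(\cdot)$ after a time shift, so its average of $\bg^q$ is $\le 1 \le s^q$ whenever it sits inside $Q_\gamma$. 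Thus, starting from a small $r$ with $\fint_{Q_r(z_r)} \bg^q > s^q$ and increasing $r$, there is a first (supremal) radius $r_* \le \zeta(z_{r_*})$ at which $\fint_{5Q_{r_*}(z_{r_*})} \bg^q < s^q$ while $\fint_{Q_{r_*}(z_{r_*})} \bg^q \ge s^q$: precisely condition~\eqref{hyp des cylindres Qi}. To make this rigorous I would define $r_* := \sup\{ r \le \zeta(z_r) : \fint_{Q_r(z_r)} \bg^q \ge s^q \text{ and } 5Q_r(z_r)\subset Q_\gamma\}$ (or an analogous infimum over the $5Q$-average falling below $s^q$), check this set is nonempty for the chosen Lebesgue point, and use continuity of $r \mapsto \fint_{5Q_r(z_r)}\bg^q$ and $r\mapsto\fint_{Q_r(z_r)}\bg^q$ in $r$ (absolute continuity of the integral) to land exactly on the two-sided bound~\eqref{hyp des cylindres Qi}; one must also verify $Q_{r_*}(z_{r_*}) \subset Q_\gamma$, which follows from $r_* \le \zeta(z_{r_*}) < \gamma$ and $z_{r_*} \in Q_\gamma$ via the estimates in Lemma~\ref{l:shifted}. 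Hence $z \in Q_{r_*}(z_{r_*})$ is one of the cylinders $Q_i$, so $z \in \cG$.

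The main obstacle I expect is the bookkeeping around the re-centered shrinking cylinders $Q_r(z_r)$ versus cylinders anchored at a fixed center: Lebesgue differentiation is naturally stated for a sequence of cylinders containing $z$ with possibly-moving centers, whereas the family in Lemma~\ref{l:covering} asks for cylinders $Q_r(z_0)$ with $r \le \zeta(z_0)$ and requires $5Q_r(z_0) \subset Q_\gamma$. I would need to argue that one can choose the moving centers $z_r$ so that the large-scale behaviour is controlled — for instance taking $z_r$ with $z$ on the "top face" so that $Q_r(z_r)$ genuinely grows to fill a neighbourhood of the parabolic boundary of $Q_\gamma$ as $r$ increases, and checking that the relevant averages are continuous (not just measurable) in $r$. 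A secondary technical point is handling points $z$ near $\partial_p Q_\gamma$, where $\zeta(z)$ is small: there $r_*$ is forced to be small, but the two-sided inequality~\eqref{hyp des cylindres Qi} can still be achieved because $\bg$ was renormalised (property (i)) precisely so that the threshold $s^q > 1$ is never reached at the macroscopic scale near the boundary. The rest — continuity of the integral averages in $r$ and the containment checks — is routine given Lemmas~\ref{l:zeta_estimate}, \ref{l:shifted} and~\ref{l:localization}.
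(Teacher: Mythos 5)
Your proposal is correct and follows essentially the same route as the paper: Lebesgue differentiation along the re-centred shrinking cylinders $Q_r(z_r)$ with $z_r = z \circ (\min(-t,r^2/2),0,0)$, the normalisation~\eqref{cond initial normalisation} to kill the averages at radii $r > (2/3)\zeta(z_r)$, continuity of $r \mapsto \fint_{Q_r(z_r)} \bg^q$ to locate a maximal stopping radius, and the identity $5Q_{\bar r}(z_{\bar r}) = Q_{5\bar r}(z_{5\bar r})$ together with Lemma~\ref{l:shifted} to verify the two-sided condition~\eqref{hyp des cylindres Qi}. The only cosmetic difference is that the paper argues by contraposition (showing $\bg \le s$ a.e. off $\cG$) and takes the stopping radius as the largest $r$ where the average \emph{equals} $s^q$, which slightly streamlines the case analysis in your sup-based definition of $r_*$.
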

%--------------------------------------
\begin{proof}
 We notice that both sets $\{\bg > s\}\cap Q_\gamma$ and $\cG$ are contained
  in $Q_\gamma$.  We prove that $\bg(z) \leq s$ for almost all
  $z=(t,x,v)\in Q_\gamma\backslash\cG$.

  We pick $z_r$ in such a way that $z \in Q_r (z_r)$ and  $5Q_r  (z_r)=Q_{5r}(z_{5r})$.
  It is sufficient to consider  $z_r =z \circ (\min(-t,r^2/2),0,0)$.
  Moreover $z_r \in Q_\gamma$ and $Q_{r}(z_r)\subset Q_\gamma$ for $r>0$ small enough.

  Thanks to the Lebesgue differentiation result, Lemma \ref{lem: lebesgue diff}, it is
  sufficient to prove that for  $r >0$ small enough,
  \begin{equation}
    \label{hyp pour Lebesgue2}
    \fint_{Q_r(z_r)} \bg^q < s^q.
  \end{equation}
   We prove
  \eqref{hyp pour Lebesgue2} by contradiction. Lemma \ref{l:zeta_estimate} yields that $\zeta (z_r) \le \zeta (z)+ r/2$, which
  implies that when $r > \zeta(z)$, we have $r>(2/3)\zeta(z_r)$. In particular, \eqref{cond initial normalisation}
  holds which implies that \eqref{hyp pour Lebesgue2} is true for
  $r > \zeta (z)$. The continuity of the function
  $r \mapsto \fint_{\; Q_r(z_r)} \bg^q$ on $(0,+\infty)$ implies
  that if \eqref{hyp pour Lebesgue2} is not true for all $r >0$,
  there is  $\bar r=\max\{r >0: \fint_{\; Q_r(z_r)} \bg^q = s^q \} \le \zeta
  (z) \le \zeta (z_{\bar r}) +\bar r/2$ so that $\bar r \leq 2\zeta (z_{\bar r})$. Thanks to Lemma \ref{l:shifted},
  $5Q_{\bar r} (z_{\bar r})=Q_{5 \bar r}(z_{5 \bar r}) \subset Q_{\gamma} $ which leads to
  \[
    \fint_{\;  5 Q_{\bar r} (z_{\bar r})} \bg^q < s^q = \fint_{\;  Q_{\bar r} (z_{\bar r})} \bg^q.
  \]
  We deduce that the cylinder $Q_{\bar r}(z_{\bar r})$ is
  included in $\cG$. This implies that $z\in \cG$, which
  contradicts $z\in Q_{\gamma}\backslash\cG$.
\end{proof}

\subsection{Reverse H\"older inequality on superlevel sets}

%------------------------------------------------------------------------------------
\begin{lemma}[Reverse H\"older on superlevel sets]
  \label{l:reverse-holder-superlevel}
  Given $\gamma>1$ and non-negative functions
  $g \in L^q(Q_{\gamma})$ and $h \in L^\sigma(Q_{\gamma})$, for some
  $\sigma>q >1$ such that there is $b >1$ and $\theta \in (0,1)$ so
  that for all $R>0$ and all $z_0 \in Q_{\gamma}$ such that $Q_{\gamma R}(z_0) \subset Q_{\gamma}$
  the inequality~\eqref{eq:reverse-holder} holds. Let $\bg$ and
  $\bh$ be defined on $\R \times \R^d \times \R^d$ as in
  Lemma~\ref{l:localization}. We consider the superlevel sets
  of $\bg$ and $\bh$:
  \begin{equation*}
    \forall \, t \ge 1, \quad \fg(t):=\{z: \bg(z)>t \} \quad \text{ and }
    \quad \fh(t):=\{z: \bh(z)>t \}.\end{equation*} If
  $\bar \theta \leq \bar \theta_0:= (2.4^q 5^{8d+4})^{-1}$,
  then we have 
  \begin{align*}
    \forall \, t \ge 1, \quad
    \int_{\fg(t)} \bg(z)^q \dd z \le a t^{q-1} \int_{\fg(t)} \bg(z)
    \dd z+a\int_{\fh(t)} \bh(z)^q \dd z,
  \end{align*}
  where $a= 2 \bar b 5^{8d+4} 4^q$, with $\bar b$ appearing in \eqref{eq:holdinv}.
\end{lemma}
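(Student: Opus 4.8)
The plan is to run the standard Calderón–Zygmund-type argument on superlevel sets, transferring the localized reverse Hölder inequality \eqref{eq:holdinv} from kinetic cylinders to the superlevel sets $\fg(t)$ via the covering Lemma~\ref{l:covering} and the Vitali Lemma~\ref{lem:Vitali}. Fix $t\ge 1$. Split $\bg$ according to whether $\bg\le st$ or $\bg>st$ for a threshold $s=s(q)>1$ to be chosen (something like $s=2$, say), so that it suffices to estimate $\int_{\fg(st)}\bg^q$. Apply Lemma~\ref{l:covering} with level $st$: up to a null set, $\fg(st)\cap Q_\gamma$ is covered by the union $\cG$ of cylinders $Q_i=Q_{r_i}(z_i)$ with $z_i\in Q_\gamma$, $r_i\le\zeta(z_i)$, and $\fint_{5Q_i}\bg^q< (st)^q\le\fint_{Q_i}\bg^q$; then extract a disjoint Vitali subfamily $(Q_i)_{i\in I}$ with $\fg(st)\cap Q_\gamma\subset\bigcup_{i\in I}5Q_i$ (modulo null sets).

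The first key step is the pointwise/averaged estimate on each selected cylinder. On $Q_i$ the reverse Hölder inequality \eqref{eq:holdinv} applies since $r_i\le\zeta(z_i)<\gamma r_i$... wait, rather with $r=r_i$ one has $Q_{\gamma r_i}(z_i)\subset 5Q_i$ when $\gamma$ is not too large — more carefully, one uses that $Q_{\gamma r_i}(z_i)$ relates to $5Q_i$ up to the shift, and bounds $\fint_{Q_{\gamma r_i}(z_i)}\bg^q$ by $C\fint_{5Q_i}\bg^q<C(st)^q$. So from \eqref{eq:holdinv},
\begin{equation*}
(st)^q\le\fint_{Q_i}\bg^q\le\bar b\Big\{\Big(\fint_{Q_{\gamma r_i}(z_i)}\bg\Big)^q+\fint_{Q_{\gamma r_i}(z_i)}\bh^q\Big\}+\bar\theta\,C\,(st)^q,
\end{equation*}
and choosing $s$ and the smallness of $\bar\theta$ (this is where $\bar\theta_0=(2\cdot 4^q 5^{8d+4})^{-1}$ enters, absorbing $\bar\theta C(st)^q$ into the left side while keeping a factor like $(st)^q/2$) one gets
\begin{equation*}
(st)^q\lesssim\bar b\Big\{\Big(\fint_{Q_{\gamma r_i}(z_i)}\bg\Big)^q+\fint_{Q_{\gamma r_i}(z_i)}\bh^q\Big\}.
\end{equation*}
Now split the average of $\bg$ over $Q_{\gamma r_i}(z_i)$ into the part on $\{\bg\le t\}$ (contributing at most $t$, hence at most $t^q$ after the $q$-th power, which is $\le (st)^q/2$ and gets absorbed again) and the part on $\fg(t)$; similarly split the $\bh^q$ average into $\{\bh\le t\}$ (contributing $\le t^q$) and $\fh(t)$. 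This yields
\begin{equation*}
(st)^q|5Q_i|\lesssim \bar b\,|5Q_i|^{1-q}\Big(\int_{\fg(t)\cap Q_{\gamma r_i}(z_i)}\bg\Big)^q+\bar b\,5^{8d+4}\int_{\fh(t)\cap Q_{\gamma r_i}(z_i)}\bh^q,
\end{equation*}
where I absorb the $5Q_i$-vs-$Q_{\gamma r_i}(z_i)$ volume ratio (a power of $5$) into constants. Using $|Q_{\gamma r_i}(z_i)|\le 5^{8d+4}|Q_i|$ and dividing appropriately, and then recombining, one extracts a bound of the shape $|5Q_i|\lesssim (st)^{-q}\bar b\big[(st)^{q-1}\int_{\fg(t)\cap 5Q_i}\bg + \text{($\bh$ term)}\big]$ — more directly, using $\big(\fint\bg\big)^q\le \big(\fint\bg\big)^{q-1}\fint\bg\le (st)^{q-1}\fint\bg$ is false since $\fint\bg$ can exceed $st$; instead bound $\big(\int_{\fg(t)\cap Q_{\gamma r_i}(z_i)}\bg\big)^q\le \big(\int_{\fg(t)\cap Q_{\gamma r_i}(z_i)}\bg\big)\cdot\big(\int_{Q_{\gamma r_i}(z_i)}\bg\big)^{q-1}$ and then use $\fint_{Q_{\gamma r_i}(z_i)}\bg\le\big(\fint_{Q_{\gamma r_i}(z_i)}\bg^q\big)^{1/q}\le C\,st$ by Jensen and the covering condition. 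This is the routine but delicate bookkeeping; the outcome is
\begin{equation*}
\int_{Q_i}\bg^q\lesssim \bar b\,5^{8d+4}4^q\Big(t^{q-1}\!\!\int_{\fg(t)\cap 5Q_i}\!\!\bg+\!\int_{\fh(t)\cap 5Q_i}\!\!\bh^q\Big).
\end{equation*}

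The second key step is summation: since the $Q_i$ are disjoint and $\fg(st)\cap Q_\gamma\subset\bigcup 5Q_i$, we get (using $\fg(st)\subset\fg(t)$, and being careful that the covering sets $5Q_i$ have bounded overlap is not needed on the left since we only sum $\int_{Q_i}$, while on the right the sets $5Q_i\cap\fg(t)$ may overlap — here one uses instead that $\int_{5Q_i}\bg^q\le 5^{8d+4}\int_{Q_i}\bg^q$-type control is the wrong direction; the clean route is to note $\int_{\fg(st)\cap Q_\gamma}\bg^q\le\sum_i\int_{5Q_i\cap\fg(st)}\bg^q$ and then bound each $\int_{5Q_i}\bg^q\le (st)^q|5Q_i|\le C(st)^q|Q_i|\le C\int_{Q_i}\bg^q$, using \eqref{hyp des cylindres Qi}, turning the overlapping cover into a sum over the disjoint $Q_i$ at the cost of a constant). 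Then
\begin{equation*}
\int_{\fg(st)\cap Q_\gamma}\bg^q\le C\sum_{i\in I}\int_{Q_i}\bg^q\lesssim \bar b\,5^{8d+4}4^q\sum_i\Big(t^{q-1}\int_{\fg(t)\cap 5Q_i}\bg+\int_{\fh(t)\cap 5Q_i}\bh^q\Big).
\end{equation*}
The overlaps of the $5Q_i$ on the right are controlled because each $5Q_i$ is obtained from a disjoint $Q_i$ by a fixed-dilation-plus-shift, so $\sum_i\mathbbm 1_{5Q_i}\le N_d$ for a dimensional $N_d$ — actually the Vitali structure gives the cover but not bounded overlap, so instead I would argue: by Lemma~\ref{lem:Vitali} the $5Q_i$ cover, and for the right-hand side one simply uses $\sum_i\int_{\fg(t)\cap 5Q_i}\bg\le\big(\sup_i\#\{j:5Q_j\cap 5Q_i\ne\emptyset\}\big)\int_{\fg(t)}\bg$; bounded overlap of $\{5Q_i\}$ for a Vitali family of kinetic cylinders is a standard dimensional fact (the $Q_i$ are disjoint and comparable in size to their $5Q_i$). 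Finally, outside $\fg(st)$ one has $\int_{\fg(t)\setminus\fg(st)}\bg^q\le (st)^q|\fg(t)|$, which combined with the chain above and reabsorbing gives, after adjusting constants and using $s$ fixed, exactly
\begin{equation*}
\int_{\fg(t)}\bg^q\le a\,t^{q-1}\int_{\fg(t)}\bg+a\int_{\fh(t)}\bh^q,\qquad a=2\bar b\,5^{8d+4}4^q,
\end{equation*}
after also handling the part of $\fg(t)$ outside $Q_\gamma$, which is empty since $\bg$ is (effectively, by the normalization in Lemma~\ref{l:localization}) supported where $\zeta>0$, i.e.\ inside $Q_\gamma$ — more precisely $\bg$ vanishes off $Q_\gamma$ by definition of $\zeta$, so $\fg(t)\subset Q_\gamma$ for $t\ge 1$ once one checks, via \eqref{cond initial normalisation} with large $r$, that $\bg\le 1$ near $\partial Q_\gamma$; this is where the threshold $t\ge 1$ is used.

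\textbf{Main obstacle.} The delicate point is the constant bookkeeping: one must choose the intermediate level $s$ and absorb \emph{two} terms into the left-hand side — the $\bar\theta$-term (requiring $\bar\theta\le\bar\theta_0$) and the low-level parts $\int_{\{\bg\le t\}}$ and $\int_{\{\bh\le t\}}$ contributing $\lesssim t^q$ — while tracking every power of $5$ coming from $|5Q_i|/|Q_i|$ and $|Q_{\gamma r_i}(z_i)|/|Q_i|$ and every power of $4$ from the $(a+b)^q\le 2^{q-1}(a^q+b^q)$ splittings, so that the final constant is exactly $a=2\bar b\,5^{8d+4}4^q$ and the smallness threshold is exactly $\bar\theta_0=(2\cdot 4^q 5^{8d+4})^{-1}$. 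A secondary subtlety is justifying that the right-hand side sums $\sum_i\int_{5Q_i\cap\fg(t)}\bg$ are controlled by $\int_{\fg(t)}\bg$ with a dimensional constant, i.e.\ the bounded-overlap property of the dilated Vitali family $\{5Q_i\}$ of kinetic cylinders; this follows from disjointness of the $Q_i$ together with the comparability $|5Q_i|\simeq_d |Q_i|$ and a packing argument in the kinetic geometry.
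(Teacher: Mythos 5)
Your overall strategy (split at an intermediate level $s=\alpha t$, cover $\fg(s)$ by the cylinders of Lemma~\ref{l:covering}, extract a disjoint Vitali subfamily, apply~\eqref{eq:holdinv} on each selected cylinder, split the resulting averages into high- and low-level parts, and sum) is exactly the paper's. However, two steps as you describe them do not go through.

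First, the summation. You reduce $\sum_i\int_{5Q_i\cap\fg(t)}\bg$ to $\int_{\fg(t)}\bg$ by invoking ``bounded overlap of $\{5Q_i\}$ for a Vitali family'', calling it a standard dimensional packing fact. It is not: a Vitali subfamily is disjoint but its members live at arbitrarily different scales, and the $5$-dilates of disjoint sets of rapidly decreasing radii accumulating at a point can all contain that point (already for the disjoint intervals $[2^{-k-1},2^{-k}]\subset\R$, $k\ge 1$, every $5$-dilate contains the origin). The paper avoids any overlap count: after obtaining the per-cylinder estimate $|Q_i^{(\gamma)}|<t^{-1}\bigl(\int_{Q_i^{(\gamma)}\cap\fg(t)}\bg+\cdots\bigr)$ for \emph{every} cylinder of the covering family, it observes that $\cG\subset\bigcup_iQ_i^{(\gamma)}$ and applies the Vitali lemma a \emph{second} time, to the dilated family, producing a disjoint subfamily $(Q_j^{(\gamma)})_{j\in J}$ with $|\cG|\le 5^{4d+2}\sum_{j\in J}|Q_j^{(\gamma)}|$; disjointness then lets the right-hand sides add up to $\int_{\fg(t)}\bg$, $\int_{\fh(t)}\bh^q$, etc. You need this second extraction (or a genuine substitute for it).

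Second, your treatment of the $\bar\theta$-term and of $\bigl(\fint_{Q_{\gamma r_i}(z_i)}\bg\bigr)^q$ rests on an upper bound $\fint_{Q_{\gamma r_i}(z_i)}\bg^q\le C(st)^q$, which you propose to deduce from $Q_{\gamma r_i}(z_i)\subset 5Q_i$ ``when $\gamma$ is not too large''. The lemma must hold for every $\gamma>1$, and that inclusion fails once $\gamma^2>13$: the time extent $\gamma^2r_i^2$ of $Q_{\gamma r_i}(z_i)$ then exceeds $25r_i^2-\tau_{t_i,r_i}$, so $Q_{\gamma r_i}(z_i)$ sticks out of the shifted cylinder $5Q_i$ in the past. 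The paper needs no upper bound on $\fint_{Q_i^{(\gamma)}}\bg^q$ at all: it takes the $q$-th root of~\eqref{eq:holdinv} to linearize it, splits $\int_{Q_i^{(\gamma)}}\bg^q$ into $\int_{Q_i^{(\gamma)}\cap\fg(t)}\bg^q+t^q|Q_i^{(\gamma)}|$ (and similarly for $\bg$ and $\bh^q$), absorbs the three volume pieces by the choice $\alpha\simeq 4\bar b^{1/q}$, and carries the term $\frac{\bar\theta}{\bar b}t^{1-q}\int_{Q_i^{(\gamma)}\cap\fg(t)}\bg^q$ all the way to the final summed inequality, where it becomes $\alpha^q5^{8d+4}\frac{\bar\theta}{\bar b}\int_{\fg(t)}\bg^q$ and is absorbed into the left-hand side precisely because $\bar\theta\le\bar\theta_0=(2\cdot 4^q5^{8d+4})^{-1}$. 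Reorganizing your argument along these two lines would make it complete; the rest of your bookkeeping matches the paper's.
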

% ----------------------------------------------------------
\begin{proof}
  In order to prove this inequality, we introduce a parameter $s$
  larger than $t$ such that $s=\alpha t$ with $\alpha>1$ to be
  chosen later.  We split the integral we want to estimate as
  follows
  \begin{equation*}
    \int_{\fg(t)} \bg^q= \int_{\fg(s)} \bg^q+\int_{\fg(t)\setminus
      \fg(s)} \bg^q.
  \end{equation*}
  We estimate the second integral as follows:
  \begin{equation}
    \label{second integ}
    \int_{\fg(t)\setminus \fg(s)} \bg^q\leq s^{q-1} \int_{\fg(t)} \bg
    \leq (\alpha t)^{q-1} \int_{\fg(t)} \bg .
  \end{equation}
  As for the first integral, we use the covering of $\fg(s)$
  obtained in Lemma~\ref{l:covering}:
  $\fg(s)\subset\cG = \cup_i Q_i$. Thanks to
  Lemma~\ref{lem:Vitali} (Vitali's covering argument), we can
  extract a countable family $I$ of disjoint cylinders
  $( Q_i)_{i\in I}$ such that $\cG\subset\cup_{i\in I} 5
  Q_i$. Since $\fg(s)\subset \cup_{i\in I} 5 Q_{i}$ with $Q_i$
  such that $\fint_{5 Q_i} \bg^q < s^q \le \fint_{Q_i} \bg$, we
  have
  \begin{equation}
    \label{eq1}
    \int_{\fg(s)} \bg^q \leq \sum_{i\in I} \int_{5 Q_i} \bg^q\leq
    5^{4d+2} s^q \sum_{i\in I} | Q_i|\leq 5^{4d+2} s^q  |\cG|.
  \end{equation}
  The covering is such that each $Q_i = Q_{r_i} (z_i)$ with
  $r_i \in (0,\zeta (z_i))$, allowing us to apply \eqref{eq:holdinv}:
  \begin{equation*}
    s^q\leq \fint_{ Q_i} \bg^q  \le  \bar b \left\{ \left(
        \fint_{Q_i^{(\gamma)}} \bg
      \right)^q+\fint_{Q_i^{(\gamma)}} \bh^q  \right\}+  \bar
    \theta\fint_{Q_i^{(\gamma)}}  \bg^q 
  \end{equation*}
  where $Q_i^{(\gamma)}$ denotes $Q_{\gamma r_i} (z_i)$, with
  $Q_i =Q_{r_i}(z_i)$.  Using
  $(A+B)^{\frac1q} \le A^{\frac1q}+ B^{\frac1q}$ for all $A,B>0$,
  we get
  \begin{equation*}
    \frac{\alpha t}{\bar b^{1/q}}|Q_i^{(\gamma)}| <
    \int_{Q_i^{(\gamma)}} \bg
    +|Q_i^{(\gamma)}|^{1-\frac{1}{q}}\left( \int_{Q_i^{(\gamma)}}
      \bh^q
    \right)^{1/q}+|Q_i^{(\gamma)}|^{1-\frac{1}{q}}\left(\frac{\bar
        \theta}{\bar b} \int_{Q_i^{(\gamma)}} \bg^q
    \right)^{1/q}.
  \end{equation*}
  We now estimate the three terms on the right hand side of the
  previous inequality. First observe that
  \begin{equation*}
    \int_{Q_i^{(\gamma)}} \bg(z) \dd z = \int_{Q_i^{(\gamma)} \cap \fg(t)} \bg(z) \dd z +
    \int_{Q_i^{(\gamma)} \cap \fg(t)^c} \bg(z) \dd z \le \int_{Q_i^{(\gamma)} \cap \fg(t)}
    \bg(z) \dd z + t|Q_i^{(\gamma)}|,
  \end{equation*}
  As for the second term, we proceed in a similar way:
   \begin{align*}
     |Q_i^{(\gamma)}|^{1-\frac{1}{q}}\left( \int_{Q_i^{(\gamma)}}
     \bh^q \dd z\right)^{1/q}
     &\leq  |Q_i^{(\gamma)}|^{1-\frac{1}{q}}\left(
       \int_{Q_i^{(\gamma)}\cap \fh(t)} \bh^q \dd z + t^q
       |Q_i^{(\gamma)}|\right)^{1/q} \\
     & \leq t|Q_i^{(\gamma)}|\left(
       \frac{1}{|Q_i^{(\gamma)}|}\int_{Q_i^{(\gamma)}\cap \fh(t)}
       \left(\frac{\bh}{t}\right)^q \dd z +  1\right)^{1/q}\\
     & \leq t|Q_i^{(\gamma)}|+t^{1-q}\int_{Q_i^{(\gamma)}\cap
       \fh(t)} \bh^q \dd z
    \end{align*}
    where we have used $(A+1)^{1/q}\leq 1+A$, for any $A\geq 0$
    and $q\ge 1$. We continue similarly for the third term:
    \begin{align*}
      |Q_i^{(\gamma)}|^{1-\frac{1}{q}}\left(\frac{\bar
      \theta}{\bar b} \int_{Q_i^{(\gamma)}} \bg^q \dd
      z\right)^{1/q}  \leq t|Q_i^{(\gamma)}|+\frac{\bar
      \theta}{\bar b}t^{1-q}\int_{Q_i^{(\gamma)}\cap \fg(t)} \bg^q
      \dd z.
    \end{align*}
    Choosing $\alpha = 4 b^{1/q} >1$,  the four last inequalities imply
    \begin{equation*}
      |Q_i^{(\gamma)}| < \frac{1}{t} \left(\int_{Q_i^{(\gamma)}
          \cap \fg(t)} \bg \dd z+t^{1-q}\int_{Q_i^{(\gamma)}\cap
          \fh(t)} \bh^q \dd z+\frac{\bar \theta}{\bar
          b}t^{1-q}\int_{Q_i^{(\gamma)}\cap \fg(t)} \bg^q \dd
        z\right).
    \end{equation*}
    Since $\gamma >1$, $Q_i \subset Q_i^{(\gamma)}$ and $\cG$ is
    covered by the family of cylinders $Q_i^{(\gamma)}$ with
    $Q_i$ as in Lemma~\ref{l:covering}.  Thanks to the Vitali
    covering result Lemma \ref{lem:Vitali}, there is a set
    $J \subset \mathbb{N}$ of indices such that
    $(Q_j^{(\gamma)})_{j\in J}$ are disjoints and
    $\cG \subset \cup_{j\in J}5Q_j^{(\gamma)}$.  Keeping in mind
    that $s = \alpha t$, we deduce from \eqref{eq1} since the
    cylinders $Q_j^{(\gamma)}$ are disjoint,
    \begin{align}
      \label{first integ}
      \nonumber  \int_{G(s)} \bg^q
      & \leq 5^{4d+2} s^q |\cG| \\
      &\leq (\alpha t)^q 5^{8d+4}\sum_{j\in J} |Q_j^{(\gamma)}|
        \nonumber \\
      &\leq (\alpha t)^q 5^{8d+4}\sum_{j\in J} \frac{1}{t}
        \left(\int_{Q_i^{(\gamma)} \cap \fg(t)} \bg \dd
        z+t^{1-q}\int_{Q_i^{(\gamma)}\cap \fh(t)} \bh^q \dd
        z+\frac{\bar \theta}{\bar
        b}t^{1-q}\int_{Q_i^{(\gamma)}\cap \fg(t)} \bg^q \dd
        z\right) \nonumber \\
      &\leq  (\alpha t)^q 5^{8d+4}\frac{1}{t} \left(\int_{\fg(t)}
        \bg \dd z+t^{1-q}\int_{\fh(t)} \bh^q \dd z+\frac{\bar
        \theta}{\bar b}t^{1-q}\int_{\fg(t)} \bg^q \dd z\right).
    \end{align}
    Combining \eqref{second integ} and \eqref{first integ} we
    deduce for
    $\bar \theta \leq \bar \theta_0:= \frac{1}{2.4^q 5^{8d+4}},$
    \begin{equation*}
      \int_{\fg(t)} \bg(z)^q \dd z \le a t^{q-1} \int_{\fg(t)} \bg(z)
      \dd z+a\int_{\fh(t)} \bh(z)^q \dd z
    \end{equation*}
    for any $a \ge 2 \max \{\alpha^{q-1},
    5^{8d+4}\alpha^{q}\}$. Since $\alpha \ge 1$ and
    $\alpha^q = 4^q b$, this condition reduces to
    $a \ge 2\bar b 5^{8d+4} 4^q $.
\end{proof}

\subsection{Reverse  H\"older inequality on level set functions}

%------------------------------------------------------
\begin{lemma}[Reducing to the monodimensional case]\label{l:reduc-1d}
  Given $\gamma>1$ and non-negative functions
  $g \in L^q(Q_{\gamma})$ and $h \in L^\sigma(Q_{\gamma})$ for some
  $\sigma>q >1$ such that there is $b >1$ and $\theta \in (0,1)$ so
  that for all $R>0$ and all $z_0 \in Q_{\gamma}$ such that $Q_{\gamma R}(z_0) \subset Q_{\gamma}$
  the inequality~\eqref{eq:reverse-holder} holds. Let $\bg$ and
  $\bh$ be defined on $\R \times \R^d \times \R^d$ as in
  Lemma~\ref{l:localization} and the superlevel sets $\fg$ and $\fh$ of $\bg$ and
  $\bh$ as defined in
  Lemma~\ref{l:reverse-holder-superlevel}. The functions
  \begin{equation*}
    \sg(t) := \int_{\fg(t)} \bg(z) \dd z\quad
    \text{ and } \quad  \sh(t)  := \int_{\fh(t)} \bh(z) \dd z
  \end{equation*}
  satisfy
for all $r >1$,
\begin{align}
  \label{lune}
   \fa t \ge 1, \quad -\int_{t} ^{+\infty} s^{r-1}{\rm d}\sg(s)=
  \int_{\fg(t)} \bg(z)^r \dd z, \\
  \label{etlautre}
   \fa t \ge 1, \quad -\int_{t} ^{+\infty} s^{r-1}{\rm d}\sh(s)=
  \int_{\fh(t)} \bh(z)^r \dd z
\end{align}
and
\begin{align}
  \label{eq:ineq-h}
  \fa t \ge 1, \quad - \int_t ^{+\infty}  s^{q-1} \dd \sg(s) \le a t^{q-1} \sg(t) -a \int_t ^{+\infty}  s^{q-1} \dd \sh(s).
\end{align}
\end{lemma}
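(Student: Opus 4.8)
The plan is to establish the three identities/inequalities in turn, treating \eqref{lune} and \eqref{etlautre} as essentially the layer-cake (distribution function) formula and \eqref{eq:ineq-h} as a direct translation of Lemma~\ref{l:reverse-holder-superlevel}. For \eqref{lune}, I would start from the observation that $\sg$ is non-increasing on $[1,\infty)$ (as $\fg(s)$ shrinks with $s$) and right-continuous, so it induces a non-positive Stieltjes measure $\mathrm d\sg$ on $(1,\infty)$; one must also record that $\sg(t)\to 0$ as $t\to\infty$ because $\bg\in L^q\subset L^1_{\mathrm{loc}}$ on the bounded set where $\bg$ is supported (recall $\bg$ vanishes outside $Q_\gamma$ by the definition via $\zeta$, and $g\in L^q(Q_\gamma)$). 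The key computation is Fubini: for fixed $t\ge 1$,
\[
\int_{\fg(t)} \bg(z)^r \dd z = \int_{\fg(t)} \bg(z)\,\bg(z)^{r-1} \dd z
= \int_{\fg(t)} \bg(z) \left( (r-1)\int_0^{\bg(z)} s^{r-2}\dd s\right)\dd z .
\]
Splitting the inner integral at $t$ and interchanging order of integration, the contribution from $s\le t$ reassembles into $t^{r-1}\sg(t)$ (up to the boundary term), while the contribution from $s>t$ becomes $(r-1)\int_t^\infty s^{r-2}\sg(s)\dd s$; an integration by parts in the Stieltjes sense, using $\sg(\infty)=0$, converts the sum into $-\int_t^\infty s^{r-1}\dd\sg(s)$. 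The identity \eqref{etlautre} is proved verbatim with $\bh$, $\fh$, $\sh$ in place of $\bg$, $\fg$, $\sg$, using $\bh\in L^\sigma \subset L^q$ on the support so that $\sh(\infty)=0$.

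With \eqref{lune} and \eqref{etlautre} in hand (applied with $r=q$, which is admissible since $q>1$), inequality \eqref{eq:ineq-h} is immediate: Lemma~\ref{l:reverse-holder-superlevel} gives, for every $t\ge 1$,
\[
\int_{\fg(t)} \bg(z)^q \dd z \le a\,t^{q-1}\int_{\fg(t)}\bg(z)\dd z + a\int_{\fh(t)}\bh(z)^q\dd z,
\]
provided $\bar\theta\le\bar\theta_0$, which holds under the running hypothesis (it is exactly the smallness built into $\theta<\theta_0$ after tracking the constants $\bar\theta=C\theta$, $C=3^{4d+2}$ from Lemma~\ref{l:localization}). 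Now rewrite the left side using \eqref{lune} with $r=q$, the first term on the right using the definition $\sg(t)=\int_{\fg(t)}\bg$, and the last term using \eqref{etlautre} with $r=q$; rearranging yields \eqref{eq:ineq-h}. One should note in passing that the hypotheses of Lemma~\ref{l:reverse-holder-superlevel} are precisely those assumed here, so no additional work is needed to invoke it.

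The main obstacle is purely a matter of care rather than depth: making the Stieltjes integration by parts rigorous, i.e.\ justifying that $-\int_t^\infty s^{q-1}\dd\sg(s)$ is finite and equals $\int_{\fg(t)}\bg^q$. The subtlety is that $\sg$ may fail to be absolutely continuous (it can have jumps where $\bg$ has flat pieces on sets of positive measure), so one genuinely needs the Lebesgue--Stieltjes framework and the monotone-function integration-by-parts formula $\int_{(t,\infty)} \phi\,\dd\sg = [\phi\sg]_t^\infty - \int_{(t,\infty)} \sg\,\dd\phi$ for $\phi(s)=s^{q-1}$ continuously differentiable, together with the vanishing of the boundary term at $+\infty$. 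The finiteness is guaranteed because $\bg\in L^q$ of a bounded set, so $\int_{\fg(1)}\bg^q<\infty$ and hence $\int_{\fg(t)}\bg^q<\infty$ for all $t\ge1$. Everything else is bookkeeping.
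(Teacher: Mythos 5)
Your proposal is correct and follows essentially the same route as the paper: both prove \eqref{lune}--\eqref{etlautre} by a layer-cake/Fubini computation with the distribution function $|\fg(s)|$ (the paper identifies the Stieltjes measures via $\dd\sg(s)=s\,\dd|\fg(s)|$ and $\dd u(s)=s^{r}\,\dd|\fg(s)|$ for $u(t)=\int_{\fg(t)}\bg^{r}$, while you use $\bg^{r-1}=(r-1)\int_{0}^{\bg}s^{r-2}\dd s$ plus one Stieltjes integration by parts --- the same idea in different bookkeeping), and both obtain \eqref{eq:ineq-h} by substituting these identities with $r=q$ into Lemma~\ref{l:reverse-holder-superlevel}. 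The only point to tighten is that the vanishing of your boundary term at infinity requires $s^{r-1}\sg(s)\to 0$ rather than merely $\sg(s)\to 0$; this follows from $s^{r-1}\sg(s)\le\int_{\fg(s)}\bg^{r}\dd z$ whenever the right-hand side of \eqref{lune} is finite (in particular for $r=q$, which is all that \eqref{eq:ineq-h} needs).
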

%-------------------------------------------------------------
\begin{proof}
The layer cake representation yields
  \begin{align*}
    \sg(t)
    & = \int_{\R^{2d+1}} \bg(z) \chi_{\{\bg(z) > t\}} \dd z \\
    & = \int_{\R^{2d+1}} \int_0 ^{+\infty} \chi_{\{\bg(z) \chi_{\{\bg(z) >t\}}
      >s\}} \dd s \dd z \\
    & = \int_{\R^{2d+1}} \int_0 ^{t} \chi_{\{\bg(z)
      >s\}}\chi_{\{\bg(z) >t\}} \dd s \dd z
      +\int_{\R^{2d+1}} \int_t ^{+\infty} \chi_{\{\bg(z) >s\}}\chi_{\{\bg(z) >t\}} \dd s
      \dd z \\
    & = \int_{\R^{2d+1}} \int_0 ^{t} \chi_{\{\bg(z) >t\}} \dd s
      \dd z +\int_{\R^{2d+1}} \int_t ^{+\infty} \chi_{\{\bg(z)
      >s\}} \dd s \dd z \\
    & =t|\fg(t)|+ \int_t ^{+\infty} |\fg(s)| \dd s
  \end{align*}
  where we have used the Fubini theorem in the last line.
  This means:
  \begin{equation}
  \label{dh}
   {\rm d}\sg(s)=s {\rm d}|\fg(s)|.
  \end{equation}
  Moreover, we deduce that for any $r \ge 1$:
  \begin{align*}
    u(t):= \int_{\fg(t)} \bg(z)^r \dd z
    & = \int_{\R^{2d+1}} \bg(z)^r \chi_{\{\bg(z) > t\}} \dd z \\
    & = \int_{\R^{2d+1}} \int_0 ^{+\infty} \chi_{\{\bg(z)^r
      \chi_{\{\bg(z) >t\}} >s\}} \dd s \dd z \\
    & = \int_{\R^{2d+1}} \int_0 ^{t^r} \chi_{\{\bg(z)^r
      >s\}}\chi_{\{\bg(z) >t\}} \dd s\\
    & +\int_{\R^{2d+1}} \int_{t^r}^{+\infty} \chi_{\{\bg(z)^r
      >s\}}\chi_{\{\bg(z) >t\}} \dd s \dd z \\
    & = \int_{\R^{2d+1}} \int_0 ^{t^r} \chi_{\{\bg(z) >t\}} \dd
      s+\int_{\R^{2d+1}} \int_{t^r} ^{+\infty} \chi_{\{\bg(z)^r
      >s\}} \dd s\\
    & =t^r|\fg(t)|+ \int_{t^r} ^{+\infty}
      \left|\fg\left(s^{1/r}\right)\right| \dd s
  \end{align*}
  where we have used the Fubini theorem again.  So we have
  \begin{equation}
  \label{df}
   {\rm d}u(s)=s^r {\rm d}|\fg(s)|.
  \end{equation}  
  Combining \eqref{dh} and \eqref{df}, we deduce
  ${\rm d}u(s)=s^{r-1}{\rm d}\sg(s)$, which implies
  \eqref{lune}. Identity \eqref{etlautre} is obtained by
  replacing $\fg(t)$ and $\bg(z)$ with $\fh(t)$ and $\bh
  (z)$. Finally~\eqref{eq:ineq-h} follows from
  Lemma~\ref{l:reverse-holder-superlevel}.
\end{proof}

\subsection{Proof of the improved integrability in dimension one}

%-------------------------------------------------------------------------
\begin{lemma}[The dimension one case] \label{l:1d case} The
  functions $\sg$ and $\sh$ defined in Lemma~\ref{l:reduc-1d}
  satisfy,
  \begin{multline*}
    - \int_1 ^{+\infty} t^{p-1} \dd \sg(t) \\
    \le   \left( \frac{ (p-1)- (p-q) }{ (p-1) - a
    (p-q) } \right)\left( - \int_1 ^{+\infty}    t^{q-1} \dd
  \sg(t) \right)+ \frac{2a(p-1)}{(p-1) - a (p-q)} \left( -\int_1
  ^{+\infty} t^{p-1} \dd \sh(t) \right)
\end{multline*}
  for $p \in [q,p^*)$ with $p^* =\min\left(\sigma,q+\frac{q-1}{a-1}\right)$.
  We recall that $a = 2 (75)^{4d+2} 4^q b >1$.
\end{lemma}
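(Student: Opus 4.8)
The plan is to read off the estimate from \eqref{eq:ineq-h} (together with its value at $t=1$) by the classical one-dimensional argument: multiply the reverse H\"older inequality by a power weight, integrate, and evaluate the resulting integrals with the layer-cake formula. The case $p=q$ is trivial, since then $(p-1)-(p-q)=q-1$, the first coefficient is $1$, and the asserted inequality reads $I_q\le I_q+2a\,J_q$ with $J_q:=-\int_1^{+\infty}t^{q-1}\dd\sh(t)\ge0$; so assume $q<p<p^*$. Set, for $r\in\{1,q,p\}$,
\[
I_r:=-\int_1^{+\infty}t^{r-1}\dd\sg(t)=\int_{\fg(1)}\bg^r\dd z,\qquad J_r:=-\int_1^{+\infty}t^{r-1}\dd\sh(t)=\int_{\fh(1)}\bh^r\dd z,
\]
by \eqref{lune}--\eqref{etlautre} for $r\in\{q,p\}$ and by the definition of $\sg,\sh$ together with $\sg(t),\sh(t)\to0$ for $r=1$; here $J_p<+\infty$ since $\bh\in L^\sigma$ with $p<\sigma$, and $\sg,\sh$ are nonincreasing so all integrations below are between nonnegative quantities. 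Put $D:=(p-1)-a(p-q)$; as a function of $p$ it is affine, strictly decreasing, and vanishes at $q+\tfrac{q-1}{a-1}$, which is $\ge p^*$, so $D>0$ on $[q,p^*)$.

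For the main step, rewrite \eqref{eq:ineq-h} as $\int_{\fg(t)}\bg^q\dd z\le a\,t^{q-1}\sg(t)+a\int_{\fh(t)}\bh^q\dd z$ for $t\ge1$, multiply by $(p-q)t^{p-q-1}\ge0$, and integrate over $t\in(1,+\infty)$. By Fubini each of the three integrals, after the $t$-integration, collapses to an elementary integral over $\fg(1)$ or $\fh(1)$ --- using $\sg(t)=\int_{\fg(t)}\bg$, $\fg(t)\subset\fg(1)$, $\int_1^{\bg}(p-q)t^{p-q-1}\dd t=\bg^{p-q}-1$, $\int_1^{\bg}t^{p-2}\dd t=\tfrac{\bg^{p-1}-1}{p-1}$ and the analogues for $\bh$ --- which yields
\[
I_p-I_q\le\frac{a(p-q)}{p-1}\,(I_p-I_1)+a\,(J_p-J_q).
\]
Collecting the $I_p$ terms and using $1-\tfrac{a(p-q)}{p-1}=\tfrac{D}{p-1}$, this becomes $\tfrac{D}{p-1}I_p\le I_q-\tfrac{a(p-q)}{p-1}I_1+a\,(J_p-J_q)$.

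To sharpen the coefficient of $I_q$: merely dropping $-\tfrac{a(p-q)}{p-1}I_1\le0$ would leave $\tfrac{p-1}{D}$ in front of $I_q$, whereas the claim has $\tfrac{q-1}{D}$. I recover this by feeding in \eqref{eq:ineq-h} at $t=1$, namely $I_q\le a\,I_1+a\,J_q$, i.e. $I_1\ge\tfrac1a(I_q-a\,J_q)$; multiplying by $-\tfrac{a(p-q)}{p-1}<0$ and using $p-1=(q-1)+(p-q)$ gives $-\tfrac{a(p-q)}{p-1}I_1\le-\tfrac{p-q}{p-1}I_q+\tfrac{a(p-q)}{p-1}J_q$. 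Hence
\[
\frac{D}{p-1}\,I_p\le\frac{q-1}{p-1}\,I_q-\frac{a(q-1)}{p-1}\,J_q+a\,J_p\le\frac{q-1}{p-1}\,I_q+a\,J_p,
\]
and multiplying by $\tfrac{p-1}{D}>0$ gives $I_p\le\tfrac{q-1}{D}I_q+\tfrac{a(p-1)}{D}J_p$. Since $q-1=(p-1)-(p-q)$ and $a(p-1)\le 2a(p-1)$, this is (in fact stronger than) the claimed inequality.

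The point requiring real care --- and the main obstacle --- is the \emph{a priori} finiteness of $I_p=\int_{\fg(1)}\bg^p$, without which the Fubini step producing the term $\tfrac{a(p-q)}{p-1}I_p$ on both sides of the first display is not legitimate. One handles this by repeating the computation with $\bg$ replaced by $\bg_k:=\min(\bg,k)$, for which all quantities are finite: $\bg_k$ satisfies \eqref{eq:ineq-h} up to the extra term $a\,t^{q-1}\int_{\fg(1)\cap\{\bg>k\}}(\bg-k)\dd z$, whose contribution to the above can be controlled --- using $D>0$ together with \eqref{eq:ineq-h} to bound the tail of $\bg$ --- by a small multiple of $\int_{\fg(1)}\bg_k^p$ and thus absorbed, after which $k\to+\infty$ and monotone convergence conclude. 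The surplus factor $2$ in the statement is exactly the slack this absorption needs; and it is the strict inequality $D>0$, equivalently $p<q+\tfrac{q-1}{a-1}$, that excludes the borderline profiles $|\fg(s)|\sim s^{-p}$ which are compatible with \eqref{eq:ineq-h} yet have $\bg\notin L^p$.
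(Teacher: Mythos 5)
Your main computation is correct and is, up to presentation, the paper's own argument: the paper multiplies by the same weight and integrates by parts on $[1,L]$, whereas you integrate the inequality \eqref{eq:ineq-h} against $(p-q)t^{p-q-1}\dd t$ and evaluate by Fubini; the two are identical identities, and your use of \eqref{eq:ineq-h} at $t=1$ to convert $I_1=\sg(1)$ into $\tfrac1a I_q-J_q$ is exactly the paper's step. (You even land on the slightly sharper coefficient $a(p-1)/D$ in front of $J_p$, because you keep the negative term $-\tfrac{a(q-1)}{p-1}J_q$; the paper discards sign information there and pays the factor $2$.)

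The genuine gap is the step you yourself flag: subtracting $\tfrac{a(p-q)}{p-1}I_p$ from both sides is meaningless unless $I_p<+\infty$ a priori, and your proposed repair (truncate $\bg$ at height $k$ and absorb the error) is not carried out and does not obviously work. For $\bg_k=\min(\bg,k)$ the error in \eqref{eq:ineq-h} is $a t^{q-1}c_k$ with $c_k=\int_{\{\bg>k\}}(\bg-k)\dd z=\int_k^{+\infty}|\fg(s)|\dd s$, so after integration it contributes $\tfrac{a(p-q)}{p-1}c_k\,k^{p-1}$, which measures the tail of the distribution function \emph{beyond} level $k$; but $I_p^{(k)}=\int\bg_k^p=|\fg(1)|+\int_1^k ps^{p-1}|\fg(s)|\dd s$ only sees levels \emph{below} $k$, so there is no elementary absorption of the former into $\eps I_p^{(k)}$, and the "slack of the factor $2$" plays no role since the obstruction is structural, not a matter of constants. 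The clean fix — and the reason the paper truncates in $t$ rather than in $\bg$ — is available verbatim inside your computation: integrate $t$ only over $(1,L)$, so that every term is finite, and check that the resulting boundary contribution
\begin{equation*}
  L^{p-q}\left[\int_L^{+\infty}s^{q-1}\dd\sg(s)+\frac{a(p-q)}{p-1}\,L^{q-1}\sg(L)\right]
\end{equation*}
is nonpositive, which follows from $\tfrac{a(p-q)}{p-1}\le 1$ (i.e.\ $D\ge 0$) and $-\int_L^{+\infty}s^{q-1}\dd\sg(s)=\int_{\fg(L)}\bg^q\ge L^{q-1}\int_{\fg(L)}\bg=L^{q-1}\sg(L)$; letting $L\to+\infty$ then yields both the finiteness of $I_p$ and the estimate. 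You should replace your $k$-truncation sketch by this argument (or prove the absorption you assert, which would require using \eqref{eq:ineq-h} quantitatively on the tail and is substantially harder).
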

%----------------------------------------------------------------------------------
\begin{proof}
 Consider for  $r \ge 1$,
  \begin{equation*}
    I(r) := - \int_1 ^{+\infty} t^{r-1} \dd \sg(t) \quad \text{ and
    } \quad  J(r) := - \int_1 ^{+\infty} t^{r-1} \dd \sh(t).
  \end{equation*}
  Pick $p \in [q,p^*)$ and consider for $L>1$,
  \begin{equation*}
    I_L (p ) =- \int_1 ^L t^{p-q} t^{q-1} \dd \sg(t).
  \end{equation*}
  We perform an integration by parts,
  \begin{align*}
    I_L(p)
    & = - \int_1 ^L t^{p-q} t^{q-1} \dd \sg(t) \\
    & = \left[ t^{p-q} \left(\int_t ^{+\infty} s^{q-1} \dd
      \sg(s)\right) \right]_1 ^L + (p-q) \int_1 ^L t^{p-q-1} \left(
      - \int_t ^{+\infty} s^{q-1} \dd \sg(s) \right) \dd t \\
    & = L^{p-q} \int_L^{+\infty} s^{q-1} \dd \sg (s) 
      + I(q) + (p-q) \int_1 ^L t^{p-q-1} \left( - \int_t ^{+\infty}
      s^{q-1} \dd \sg(s) \right) \dd t.
  \end{align*}
%  We remark that the first term in the right hand side of the
%  last equality is non-positive since $\sg$ is non-increasing. It
%  will help us controlling other error terms.
  We now use~\eqref{eq:ineq-h} and get,
  \begin{multline}
    \label{ineq I1}
    I_L(p) \le L^{p-q} \int_L^{+\infty} s^{q-1} \dd \sg (s)
    + I(q) + a (p-q) \int_1 ^L t^{p-2} \sg(t) \dd t \\
    + a (p-q) \int_1 ^{L} t^{p-q-1} \left( - \int_t^{+\infty}
      s^{q-1} \dd\sh(s)  \right) \dd t.
  \end{multline}
  First, we perform an integration by parts in the third term of
  the right hand side of~\eqref{ineq I1}:
  \begin{align*}
    a (p-q) \int_1 ^L t^{p-2} \sg(t) \dd t
    & =  a (p-q) \int_1 ^L \left(\frac{t^{p-1} -1}{p-1} \right)'
      \sg(t) \dd t \\
    & = a (p-q) \left\{ \left[ \left(\frac{t^{p-1} -1}{p-1}
      \right) \sg(t) \right]_1 ^{L} - \frac{1}{p-1} \int_1 ^L
      \left( t^{p-1} - 1 \right) \dd \sg(t) \right\} \\
    & = a (p-q) \left\{\frac{L^{p-1} -1}{p-1} \sg (L)+
      \frac{I_L(p)}{p-1} + \frac{1}{p-1} \int_1 ^L \dd \sg(t)
      \right\} \\
    &  = a  (p-q) \left\{\frac{L^{p-1}}{p-1} \sg (L) +
      \frac{I_L(p)}{p-1} - \frac{1}{p-1} \sg(1)\right\}.
  \end{align*}
  Second, inequality~\eqref{eq:ineq-h} at $t=1$ yields
  \begin{equation*}
    - \sg(1) \le \frac{1}{a} \int_1 ^\infty  s^{q-1} \dd \sg(s) -
    \int_1 ^\infty  s^{q-1} \dd \sh(s) = -    \frac{1}{a} I(q)
    +J(q).
  \end{equation*}
  We thus proved the following estimate,
  \begin{equation}
  \label{ineq I2}
  a (p-q) \int_1 ^L  t^{p-2} \sg(t) \dd t   \le  a (p-q)
  \left\{\frac{L^{p-1}}{p-1} \sg (L) + \frac{I_L(p)}{p-1} -
    \frac{1}{a(p-1)} I(q) + \frac1{p-1} J(q) \right\}.
  \end{equation}
  The fourth term of the right hand side of~\eqref{ineq I1} is
  bounded by
  \begin{align}
    \label{ineq I3}
    -a (p-q) \int_1 ^L t^{p-q-1} \int_t^{+\infty} s^{q-1} \dd \sh(s) \dd t
    & \le -a (p-q) \int_1 ^{+\infty} \int_1^{+\infty} t^{p-q-1}
      s^{q-1} \mathbbm{1}_{s\ge t} \dd \sh(s) \dd t\nonumber\\
    &=  -a (p-q) \int_1 ^{+\infty} \left\{ \int_1^{s} t^{p-q-1}
      \dd t \right\} s^{q-1} \dd \sh(s)\nonumber\\
    \nonumber  &= -a \int_1 ^{+\infty} s^{p-1}  \dd \sh(s) \\
    & = a J (p).
  \end{align}
 
  Combining \eqref{ineq I1}, \eqref{ineq I2} and \eqref{ineq I3}
  finally yields,
  \begin{align*}
    I_L(p)  \le
    & I(q) + a (p-q) \left\{  \frac{I_L(p)}{p-1} -
      \frac{1}{a(p-1)} I(q) +\frac1{p-1} J (q) \right\}+a J(p) \\
    + & L^{p-q} \left[ \int_L^{+\infty} s^{q-1} \dd \sg (s) +
        \frac{a(p-q)}{p-1} L^{q-1} \sg (L)\right].
  \end{align*}
  Observe that for $p \in [q,p^*)$ with
  $p^* =\min\left(\sigma,\frac{aq-1}{a-1}\right)$, we have $\frac{a(p-q)}{p-1} \le 1$
  and
  \begin{equation*}
    \left[\int_L^{+\infty} s^{q-1} \dd \sg (s) +
      \frac{a(p-q)}{p-1} L^{q-1} \sg (L) \right] \le
    \int_L^{+\infty} s^{q-1} \dd \sg (s) + L^{q-1} \sg (L) \le 0.
  \end{equation*}
  We thus proved that for all $L >1$,
  \begin{equation*}
    I_L(p) \le \left( \frac{ (p-1)- (p-q)}{ (p-1) - a (p-q) }
    \right) I(q) + \frac{a (p-q)}{(p-1)-a (p-q)} J(q)+
    \frac{a(p-1)}{(p-1) - a (p-q)} J(p)
  \end{equation*}
  Remark that $J(q) \le J(p)$ since $q \le p$. Letting
  $L \to +\infty$ allows us to conclude the proof.
\end{proof}

Recalling the definition of  $\sg$ and $\sh$, see
Lemma~\ref{l:reduc-1d}, and formulas~\eqref{lune} and
\eqref{etlautre}, Lemma~\ref{l:1d case} can be reformulated as
follows.
% -------------------------------------------------
\begin{lemma}[Going back to $\bg$ and $\bh$]\label{l:back-to-g}
For $p \in [q,p^*)$,
  \begin{equation*}
    \left( \int_{\fg(1)} \bg^p \dd x \right)^{1/p} \le C_{p,q,a}
    \left( \left( \int_{\fg(1)} \bg^q \dd z \right)^{1/q}+ \left(
        \int_{ \fh(1)} \bh^p \dd z \right)^{1/p} \right)
  \end{equation*}
  with $C_{p,q,b} = \frac{2a (p-1)}{(p-1)-a(p-q)}$ with $a = 2 (75)^{4d+2} 4^q b$.
\end{lemma}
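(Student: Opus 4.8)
The plan is to recognise Lemma~\ref{l:back-to-g} as a restatement of Lemma~\ref{l:1d case}: the substantive work is already done, and what remains is to pass from the Stieltjes integrals of $\sg,\sh$ back to Lebesgue integrals of $\bg,\bh$ and from $p$-th powers to $p$-th roots. First I would evaluate identity~\eqref{lune} at $(t,r)=(1,p)$ and $(t,r)=(1,q)$, and identity~\eqref{etlautre} at $(t,r)=(1,p)$, which give
\[
  -\int_1^{+\infty} t^{p-1}\dd\sg(t)=\int_{\fg(1)}\bg^p\dd z,\qquad
  -\int_1^{+\infty} t^{q-1}\dd\sg(t)=\int_{\fg(1)}\bg^q\dd z,\qquad
  -\int_1^{+\infty} t^{p-1}\dd\sh(t)=\int_{\fh(1)}\bh^p\dd z.
\]
Substituting these into the conclusion of Lemma~\ref{l:1d case} turns it into the linear inequality
\[
  \int_{\fg(1)}\bg^p\dd z\le\frac{q-1}{(p-1)-a(p-q)}\int_{\fg(1)}\bg^q\dd z+\frac{2a(p-1)}{(p-1)-a(p-q)}\int_{\fh(1)}\bh^p\dd z ,
\]
which is meaningful precisely because $p\in[q,p^*)$ with $p^*\le q+\tfrac{q-1}{a-1}$ forces $(p-1)-a(p-q)>0$.

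Next I would bound the two coefficients by a common quantity: since $a>1$ and $p\ge q$, one has $(p-1)-(p-q)=q-1\le p-1\le 2a(p-1)$, so the first coefficient is at most $C_{p,q,a}:=\tfrac{2a(p-1)}{(p-1)-a(p-q)}$, which moreover satisfies $C_{p,q,a}\ge1$. Hence $\int_{\fg(1)}\bg^p\dd z\le C_{p,q,a}\bigl(\int_{\fg(1)}\bg^q\dd z+\int_{\fh(1)}\bh^p\dd z\bigr)$, and taking $p$-th roots with the elementary inequalities $(A+B)^{1/p}\le A^{1/p}+B^{1/p}$ (for $A,B\ge0$, $p\ge1$) and $C_{p,q,a}^{1/p}\le C_{p,q,a}$ gives the asserted estimate, with exponent $1/p$ in place of $1/q$ on the first term on the right. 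To upgrade that exponent to $1/q$ one uses that $\bg>1$ on $\fg(1)$, which makes the $L^q$ and $L^p$ quantities of $\bg$ over $\fg(1)$ comparable in the relevant range; equivalently one invokes $x^{1/p}\le 1+x^{1/q}$ for $x\ge0$, the resulting additive constant being harmless since it is absorbed into $\Cgeh$ at the unscaling step in the proof of Theorem~\ref{thm:gehring}.

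I do not expect a genuine obstacle: the lemma is a reformulation, and the only point that is not pure bookkeeping is this last passage between the $L^q$ and $L^p$ scales in the first term, handled as just described. The remaining care concerns the sign condition $(p-1)-a(p-q)>0$, which is exactly the hypothesis $p<p^*$, and the tracking of the explicit constant $a=2(75)^{4d+2}4^q b$ through Lemmas~\ref{l:reverse-holder-superlevel}–\ref{l:1d case}.
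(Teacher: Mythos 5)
Your proposal follows exactly the route the paper intends: Lemma~\ref{l:back-to-g} is obtained by evaluating \eqref{lune} and \eqref{etlautre} at $t=1$ with $r=p$ and $r=q$, substituting into Lemma~\ref{l:1d case}, majorising both coefficients by $C_{p,q,a}=\tfrac{2a(p-1)}{(p-1)-a(p-q)}\ge 1$ (positive precisely because $p<p^*$), and taking $p$-th roots. The one point where you go beyond the paper (which states the lemma without proof, as a ``reformulation'') is the exponent mismatch on the first right-hand term: the substitution naturally yields $\bigl(\int_{\fg(1)}\bg^q\bigr)^{1/p}$, and since $\int_{\fg(1)}\bg^q$ need not be $\ge 1$, one cannot replace $1/p$ by $1/q$ for free; your fix via $x^{1/p}\le 1+x^{1/q}$ is sound but produces an additive constant absent from the lemma's literal statement. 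As you correctly observe, this constant is of exactly the same nature as the term $C_1C_0|Q_\gamma|^{1/p}\|g\|_{L^q(Q_\gamma)}$ already present in the proof of Theorem~\ref{thm:gehring} (coming from the set $\{\bg\le 1\}$), so it is harmless for the application; but strictly speaking the clean statement of Lemma~\ref{l:back-to-g} should either carry the exponent $1/p$ on the first term or an extra additive constant, and your write-up is the more careful of the two.
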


%------------------------------------------------

\subsection{Proof of Theorem~\ref{thm:gehring}}
\begin{proof}[Proof of Theorem~\ref{thm:gehring}]
  The localization function $\zeta$ is bounded from below on
  $Q_1$ by $c_\gamma>0$ with 
  \begin{equation}
    \label{e:cgamma}
    c_\gamma =\frac{1}{2} \min \left (\frac{\gamma-1}5,
      \left(\frac{\gamma^2-1}{13}\right)^{\frac12},
      \left(
        \frac{\gamma^3-1}{25\gamma}\right)^{\frac12}
    \right).
  \end{equation}
  With such a lower bound at hand, we now write,
  \begin{align*}
    \left( \int_{Q_1} g^p \right)^{\frac1p}
    = &C_0 \|g\|_{L^q (Q_\gamma)} \left(\int_{Q_1}
        |Q_{\zeta(z)}|^{-p/q} \bg^p \right)^{\frac1p} \\
    \le& C_1 C_0 \|g\|_{L^q (Q_\gamma)}  \left(\int_{Q_\gamma}  \bg^p \right)^{\frac1p} \\
    \le& C_1 C_0 \|g\|_{L^q (Q_\gamma)}  \left(\int_{\fg (1)}  \bg^p
         \right)^{\frac1p} + C_1 |Q_\gamma|^{\frac1p} C_0
         \|g\|_{L^q (Q_\gamma)} 
  \end{align*}
  with $C_1= |Q_1|^{-1/q} c_\gamma^{-\frac{4d+2}q}$.  From
  Lemma~\ref{l:back-to-g}, we have
  \begin{align*}
    C_0 \|g\|_{L^q (Q_\gamma)}  \left(\int_{\fg (1)}  \bg^p
    \right)^{\frac1p}  \le C_{p,q,b} \left(
    |Q_1|\|\zeta\|_\infty^{\frac{4d+2}q} \| g\|_{L^q (Q_\gamma)}
    + |Q_1|\|\zeta\|_\infty^{\frac{4d+2}q} \|h\|_{L^p(Q_\gamma)}\right).
  \end{align*}
  Recall now that $\zeta$ is bounded by $\gamma/5$ and conclude
  by combining the two last inequalities with
  \begin{equation*}
    C_G = C_1 C_0 |Q_\gamma|^{\frac1p} + C_1
    C_{p,q,b}|Q_1| (\gamma/5)^{\frac{4d+2}q}.
  \end{equation*} 
  We recall that $C_0 = 2^{\frac{4d+2}q}$ (see  Lemma~\ref{l:localization}) and
  $C_1 = |Q_1|^{-1/q} c_\gamma^{-\frac{4d+2}q}$ with $c_\gamma$
  given by \eqref{e:cgamma}.
\end{proof}

\section{Application to kinetic equations} 
\label{sec:Gain gradient}

\subsection{A zeroth order Poincaré-Wirtinger inequality}

The proof of the reverse H\"older inequality for the gradient of
solutions to the kinetic Fokker-Planck equation makes use of a
\emph{local hypoelliptic} Poincaré-Wirtinger inequality. The
proof of this inequality relies on the following zeroth order
Poincaré-Wirtinger inequality.
% ---------------------------------------------------------------------
\begin{lemma}[Zeroth order Poincaré-Wirtinger inequality]
  \label{l:0PW}
  Given a function $g =g(t,x)\in L^q(\cQ)$ with $q\in (1,+\infty)$ in some convex open
  set $\cQ \subset \R^{1+d}$, then
  \begin{equation*}
    \left\| g -  \fint_{\cQ} g \right\|_{L^q (\cQ)} 
    \le C_{PW} ^0 \left\| \nabla_{t,x} g \right\|_{W^{-1,q}(\cQ)} 
\end{equation*}
  for some constant $C_{PW} ^0>0$ depending on $\cQ$.
\end{lemma}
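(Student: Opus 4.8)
The plan is to prove the inequality by the classical duality/negative-norm argument, exploiting the fact that $\cQ$ is convex. First I would recall that the homogeneous Sobolev space $W^{-1,q}(\cQ)$ is, by definition, the dual of $W^{1,q'}_0(\cQ)$ with $1/q+1/q'=1$, and that the norm of a distribution $T$ can be computed as $\norm{T}_{W^{-1,q}(\cQ)} = \sup\{ \langle T,\varphi\rangle : \varphi \in C_c^\infty(\cQ),\ \norm{\nabla_{t,x}\varphi}_{L^{q'}(\cQ)} \le 1 \}$. The inequality to be proved is equivalent, again by duality, to the statement that for every $\psi \in L^{q'}(\cQ)$ with zero mean on $\cQ$ there exists a vector field $\Phi \in W^{1,q'}_0(\cQ;\R^{1+d})$ (or at least an element testing against $\nabla_{t,x}g$) with $\nabla_{t,x}\cdot \Phi = \psi$ and $\norm{\Phi}_{L^{q'}(\cQ)} \le C\,\norm{\psi}_{L^{q'}(\cQ)}$; this is the Bogovski\u{\i} operator, whose boundedness on $L^{q'}$ for $1<q'<\infty$ holds on bounded Lipschitz (in particular bounded convex) domains.

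Concretely, I would argue as follows. Write $\bar g := \fint_\cQ g$ and pick any $\psi \in C_c^\infty(\cQ)$ with $\fint_\cQ \psi = 0$ and $\norm{\psi}_{L^{q'}(\cQ)} \le 1$. By the Bogovski\u{\i} lemma on the bounded convex set $\cQ$, there is $\Phi \in W^{1,q'}_0(\cQ;\R^{1+d})$ with $\operatorname{div}_{t,x}\Phi = \psi$ and $\norm{\Phi}_{W^{1,q'}_0(\cQ)} \le C_B(\cQ)$. Then
\begin{align*}
  \int_\cQ (g - \bar g)\,\psi
  &= \int_\cQ g\,\psi
   = \int_\cQ g\,\operatorname{div}_{t,x}\Phi
   = - \langle \nabla_{t,x} g, \Phi \rangle \\
  &\le \norm{\nabla_{t,x} g}_{W^{-1,q}(\cQ)}\,\norm{\Phi}_{W^{1,q'}_0(\cQ)}
   \le C_B(\cQ)\,\norm{\nabla_{t,x} g}_{W^{-1,q}(\cQ)},
\end{align*}
where the second equality uses $\fint_\cQ \psi = 0$ and the integration by parts is legitimate since $\Phi$ has zero trace. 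Taking the supremum over all such $\psi$ and using the duality characterization of the $L^q$ norm (together with a density argument to pass from mean-zero test functions in $C_c^\infty$ to the full unit ball of mean-zero $L^{q'}$ functions) yields $\norm{g-\bar g}_{L^q(\cQ)} \le C_{PW}^0\,\norm{\nabla_{t,x}g}_{W^{-1,q}(\cQ)}$ with $C_{PW}^0 = C_B(\cQ)$, which depends only on $\cQ$ (and on $q$, $d$).

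The main obstacle is invoking the right form of the Bogovski\u{\i} result with a constant depending only on the geometry of $\cQ$: one must either cite a version valid on bounded convex domains (e.g. via the star-shapedness of $\cQ$ with respect to a ball, which gives an explicit integral formula for the solution operator and an $L^{q'}\to W^{1,q'}$ bound depending only on the chord/inradius ratio), or alternatively bypass it by a more hands-on argument — for instance, since $\cQ$ is convex one can use the representation $g(y)-g(y') = -\int_0^1 \nabla_{t,x}g(y' + s(y-y'))\cdot(y-y')\,ds$ (valid for smooth $g$ after mollification), average in $y'$ over $\cQ$, and estimate the resulting Riesz-type potential, reducing the negative-norm bound to a convolution estimate. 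I would favour the duality/Bogovski\u{\i} route for brevity and because it directly produces the stated negative-norm form; the convexity hypothesis is exactly what makes the implied constant depend only on $\cQ$.
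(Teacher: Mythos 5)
Your proposal is correct and follows essentially the same route as the paper: both arguments hinge on solving $\nabla_{t,x}\cdot \Phi = \psi$ with $\Phi \in W^{1,q'}_0(\cQ)$ and a bound $\|\Phi\|_{W^{1,q'}_0} \le C\|\psi\|_{L^{q'}}$ (the paper cites this as Proposition~\ref{prop:h}, from Galdi), then integrate by parts and use the $W^{-1,q}$--$W^{1,q'}_0$ duality. The only difference is that the paper short-circuits the abstract supremum over mean-zero $\psi$ by testing directly against the explicit extremal datum $G = [g-\fint_\cQ g]^{q-1} - \fint_\cQ[g-\fint_\cQ g]^{q-1}$, which yields the same constant (a factor $2$ times the divergence-equation constant) without your final density step.
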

%----------------------------------------------------------------------
In order to establish such an inequality, we first solve the
divergence equation for some $L^p$ function $G$ with zero mean on
$\cQ$. The solution is not unique, but it is crucial to our
argument, and it is a remarkable fact, that it is possible
construct a solution that vanishes at the boundary of $\cQ$. Such
constructions are detailed in \cite{MR2808162}
and go back to~\cite{MR553920}. The following statement is
extracted from~\cite[Lemma~III.3.1,~p.~162]{MR2808162}.
% -------------------------------------------------------------
\begin{proposition}[The divergence equation]\label{prop:h}
  Let $p \in (1,+\infty)$ and let $\cQ$ be a cylinder of the form
  $(-r^2,0] \times B_{r^3} \subset \R \times \R^d$.  Then there
  is $\Cdiv >0$ so that for any $G \in L^p(\cQ)$
  with $\int_{\cQ} G(t,x) \dd t \dd x =0$, there exists a
  vector-valued solution $\vec{h} \in W^{1,p}_0(\cQ,\R^{1+d})$ (the
  standard Sobolev space with Dirichlet conditions) to 
  \begin{equation*}
    \nabla_{t,x} \cdot \vec{h} = G \ \mbox{ on } \ \cQ
  \end{equation*}
  which satisfies
  \begin{equation*}
    \|h\|_{W^{1,p}_0(\cQ)} \le \Cdiv \| G \|_{L^p(\cQ)}.
  \end{equation*}
\end{proposition}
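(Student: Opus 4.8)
The plan is to reduce the statement to the classical Bogovski\u\i{} solution theory for the divergence equation, in the precise form of \cite[Lemma~III.3.1,~p.~162]{MR2808162}. First I would observe that, modulo the Lebesgue-null face $\{t=0\}$ --- which neither $L^p(\cQ)$ nor $W^{1,p}_0(\cQ)$ detects --- the set $\cQ=(-r^2,0]\times B_{r^3}$ coincides with the bounded open convex set $(-r^2,0)\times B_{r^3}\subset\R\times\R^d$. A bounded open convex set is star-shaped with respect to every open ball it contains; concretely $\cQ$ is star-shaped with respect to a ball of radius $\simeq\min(r^2,r^3)$ and sits inside a ball of radius $\simeq\max(r^2,r^3)$, so the geometric hypotheses of the cited result hold.

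Second, I would invoke \cite[Lemma~III.3.1]{MR2808162}: on a bounded domain star-shaped with respect to an open ball, Bogovski\u\i's operator $\mathcal B$ is a bounded linear map from $L^p_0(\cQ):=\{G\in L^p(\cQ):\int_\cQ G=0\}$ into $W^{1,p}_0(\cQ;\R^{1+d})$ with $\nabla_{t,x}\cdot(\mathcal B G)=G$ in $\cQ$ and $\|\mathcal B G\|_{W^{1,p}_0(\cQ)}\le C\,\|G\|_{L^p(\cQ)}$, the constant $C$ depending only on $p$, on the dimension $1+d$, and on the ratio of the circumscribed radius to the radius of the ball of star-shapedness. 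Taking $\vec h=\mathcal B G$ for the given mean-zero $G$ produces exactly the asserted solution, identity and bound, with $\Cdiv$ a function of $p$, $d$ and $r$ through that geometric ratio. No uniformity in $r$ is claimed, and none is needed: in the application (Lemma~\ref{l:0PW}) the Poincar\'e--Wirtinger constant is itself allowed to depend on the domain.

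The only point deserving more than a bare citation is the passage from the ``kinetic cylinder slice'' $\cQ$ of the statement to the abstract hypotheses of \cite{MR2808162}, and that is precisely what the convexity remark of the first step settles. If one wants the argument self-contained, one recalls that $\mathcal B$ is the explicit singular integral operator built from a fixed normalised bump supported in the ball of star-shapedness; the $W^{1,p}$ estimate then follows because the first derivatives of its kernel decompose into a Calder\'on--Zygmund convolution kernel (bounded on $L^p$ for $1<p<\infty$) plus a weakly singular one, and the $W^{1,p}_0$ membership follows because $\mathcal B$ maps $C^\infty_c(\cQ)$ into $C^\infty_c(\cQ)$ together with density of smooth mean-zero functions in $L^p_0(\cQ)$. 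I expect this density/approximation bookkeeping, and keeping track of how $\Cdiv$ degenerates as $r\to0^+$ or $r\to+\infty$, to be the only mildly delicate points; the rest is a direct application of the cited lemma.
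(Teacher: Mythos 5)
Your proposal is correct and coincides with the paper's treatment: the paper gives no proof of Proposition~\ref{prop:h} beyond citing \cite[Lemma~III.3.1, p.~162]{MR2808162}, which is exactly the Bogovski\u\i-type result you invoke. Your additional remarks (convexity of $\cQ$ hence star-shapedness with respect to a ball, and the dependence of $\Cdiv$ on $p$, $d$ and $r$ only) correctly fill in the routine verification that the cited lemma applies.
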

% -------------------------------------------------------------
With such a proposition at hand, we can prove the previous lemma.
\begin{proof}[Proof of Lemma~\ref{l:0PW}]
  For any real number $a \in \R$, we write
  $[a]^{q-1} := |a|^{q-2}a$ if $a \neq 0$ and $[0]^{q-1}=0$. In
  particular, $|a|^q = [a]^{q-1} a$. We then consider the vector
  field $\vec{h}$ given by Proposition~\ref{prop:h} for
  \begin{equation*}
    G:= \left[g-  \fint_{\cQ} g\right]^{q-1}- \fint_{\cQ}\left[g-
      \fint_{\cQ} g\right]^{q-1}.
  \end{equation*}
  Let us check that $G\in L^p (\cQ)$ where $p$ satisfies $\frac{1}{p}+\frac{1}{q}=1$. We write $G$ as
  $G_0 - \fint_\cQ G_0$ with $G_0 = \left[g- \fint_{\cQ} g\right]^{q-1}$.  Then
  \begin{equation*}
    \|G_0\|_{L^p (\cQ)}^p \le \int_\cQ \left|g - \fint_\cQ g
    \right|^{p (q-1)} =\int_\cQ \left|g - \fint_\cQ g
    \right|^{q}.
  \end{equation*}
  This means that
  \begin{equation*}
    \|G_0\|_{L^p (\cQ)} \le \left\| g - \fint_\cQ
      g \right\|_{L^q(\cQ)}^{q-1}.
  \end{equation*}
  Moreover, Jensen's inequality yields,
  \begin{equation*}
    \left\| \fint_\cQ G_0 \right\|_{L^p(Q)} \le \left\| g -
      \fint_\cQ g \right\|_{L^q(\cQ)}^{q-1}
  \end{equation*}
  so that
  \begin{equation*}
    \|G\|_{L^p (\cQ)} \le 2 \left\| g - \fint_\cQ
      g \right\|_{L^q(\cQ)}^{q-1}.
  \end{equation*}

  We now compute
  \begin{align*}
    \int_{\cQ} \left| g(t,x) - \fint_{\cQ} g \right|^q \dd t \dd x
    & = \int_{\cQ} G \left(g -  \fint_{\cQ} g \right)\\
    &= \int_{\cQ}  \left(g(t,x)-\fint_{\cQ} g \right) \nabla_{t,x}\cdot h(t,x) \dd t \dd x \\
    &= - \int_{\cQ} \nabla_{t,x} g(t,x) \cdot h(t,x) \dd t \dd x \\
    &\le  \left\| \nabla_{t,x} g(t,x) \right\|_{W^{-1,q}(\cQ)}
      \left\|h \right\|_{W^{1,p}_0(\cQ)} \\
    & \le \Cdiv \left\| \nabla_{t,x} g(t,x)
      \right\|_{W^{-1,q}(\cQ)} \left\|G \right\|_{L^p(\cQ)}.
  \end{align*}
  The estimate of the $L^p$ norm of $G$ concludes the proof with
  $C_{PW}^0 = 2 \Cdiv$.
\end{proof}

\subsection{Poincar\'e inequality in $L^{q}$}
\label{sec:Poincare}

We prove here a \emph{local hypoelliptic} Poincar\'e-Wirtinger
inequality for solutions to~\eqref{e:main}, following ideas of
\cite{am19}.% , and then explain why it is not sufficient for
% estimating the oscillation at all scales.
% --------------------------------------------------------------------------------------
\begin{theorem}[Local hypoelliptic Poincar\'e-Wirtinger
  inequality] \label{t:hpw} Let $Q =Q_r$ be a cylinder of
  $\R \times \R^d \times \R^d$ for some $r>0$.  For any function
  $f \in L^2 (Q)$ and any $1<q\leq 2$, we have,
  \begin{equation}
    \label{eq:pw-pure}
    \left\| f - \langle \langle f  \rangle \rangle_{Q} \right\|_{L^q (Q)}
     \le C_{hpw} \left( \left\| (\partial_t + v\cdot \nabla_x)
         f\right\|_{L^q(\cQ;W^{-1,q}(B))} +
       \left\| \nabla_v f \right\|_{L^q (Q)} \right)
   \end{equation}
   where $\cQ = (-r^2,0]\times B_{r^3}$ and $B=B_r$ and the
   average is defined as
   \begin{equation*}
     \langle \langle f \rangle \rangle_{Q} :=  \fint_{Q} f(t,x,v)
     \dd t \dd x \dd v.
   \end{equation*}
   The constant $C_{hpw} >0$ only depends on $Q$.
\end{theorem}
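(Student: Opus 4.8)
\textbf{Plan of proof for Theorem~\ref{t:hpw}.}

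The strategy is to split the oscillation of $f$ into a ``transport part'' and a ``velocity part'' via an intermediate averaging, following the scheme of~\cite{am19}. I would first introduce the partial average in the velocity variable,
\begin{equation*}
  \bar f (t,x) := \fint_{B} f(t,x,v) \dd v,
\end{equation*}
which lives on $\cQ = (-r^2,0]\times B_{r^3}$, and write the triangle inequality
\begin{equation*}
  \left\| f - \langle\langle f\rangle\rangle_Q \right\|_{L^q(Q)}
  \le \left\| f - \bar f \right\|_{L^q(Q)}
  + \left\| \bar f - \langle\langle f \rangle\rangle_Q \right\|_{L^q(Q)},
\end{equation*}
noting that $\langle\langle f \rangle\rangle_Q = \fint_\cQ \bar f$, so the second term equals $|B|^{1/q}\,\|\bar f - \fint_\cQ \bar f\|_{L^q(\cQ)}$ up to the volume factor. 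The first term is a pure Poincaré-Wirtinger inequality in the velocity ball $B_r$ for a.e.\ $(t,x)$: since $B_r$ is convex, $\|f(t,x,\cdot) - \bar f(t,x)\|_{L^q(B_r)} \le C \|\nabla_v f(t,x,\cdot)\|_{L^q(B_r)}$, and integrating in $(t,x)$ gives $\|f - \bar f\|_{L^q(Q)} \le C\|\nabla_v f\|_{L^q(Q)}$, which is one of the two terms on the right-hand side of~\eqref{eq:pw-pure}.

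For the second term I would apply Lemma~\ref{l:0PW} (the zeroth-order Poincaré-Wirtinger inequality on the convex set $\cQ$) to the function $\bar f$, obtaining
\begin{equation*}
  \left\| \bar f - \fint_\cQ \bar f \right\|_{L^q(\cQ)}
  \le C_{PW}^0 \left\| \nabla_{t,x} \bar f \right\|_{W^{-1,q}(\cQ)}.
\end{equation*}
The key point is then to control $\nabla_{t,x}\bar f$ in $W^{-1,q}(\cQ)$ by the data appearing in~\eqref{eq:pw-pure}. Here I would exploit that $\bar f$ is a velocity-average of $f$, so that its space-time gradient is governed by the transport operator $\partial_t + v\cdot\nabla_x$ acting on $f$ plus a $\nabla_v f$ correction: testing against a scalar $W^{1,p}_0(\cQ)$ function $\varphi$ (with $1/p+1/q=1$), one writes $\langle \partial_t \bar f + \nabla_x\cdot(\text{something}), \varphi\rangle$ in terms of $\fint_B (\partial_t + v\cdot\nabla_x)f \,\varphi \dd v$ after integrating by parts in $x$, the extra terms from moving $v$ past $\nabla_x$ producing $\nabla_v f$ contributions (an averaging-lemma-type identity). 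This yields
\begin{equation*}
  \left\| \nabla_{t,x}\bar f \right\|_{W^{-1,q}(\cQ)}
  \le C \left( \left\| (\partial_t + v\cdot\nabla_x) f \right\|_{L^q(\cQ; W^{-1,q}(B))}
  + \left\| \nabla_v f \right\|_{L^q(Q)} \right),
\end{equation*}
where the $W^{-1,q}(B)$ norm in the transport term is exactly what is needed to absorb the pairing in the velocity variable against the (rescaled) test function. Combining the three displays and tracking the dependence of all constants on $r$ (equivalently on $Q$) through scaling gives~\eqref{eq:pw-pure}.

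\textbf{Main obstacle.} The delicate step is the last one: making precise the ``commutator'' identity that expresses $\nabla_{t,x}\bar f$ through $(\partial_t+v\cdot\nabla_x)f$ and $\nabla_v f$, and checking that the natural norm on the transport term is $L^q_{t,x}W^{-1,q}_v$ rather than a full $W^{-1,q}$ in all variables. One must be careful that $f$ is only assumed $L^2$ (with $q\le 2$), so all the integrations by parts and the use of $(\partial_t+v\cdot\nabla_x)f$ have to be interpreted in the distributional sense and justified by the density of smooth functions; the averaging in $v$ is what makes $\bar f$ regular enough in $(t,x)$ for Lemma~\ref{l:0PW} to apply. Getting the constant's dependence on $r$ right under the kinetic scaling $(t,x,v)\mapsto(r^2 t, r^3 x, r v)$ also requires some bookkeeping, but is routine once the unscaled ($r=1$) inequality is established.
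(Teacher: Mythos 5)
Your plan coincides with the paper's proof: the same decomposition through the partial velocity average $\langle f\rangle_B$, the Poincaré--Wirtinger inequality in $v$ for the first piece, Lemma~\ref{l:0PW} for the second, and a duality argument converting $\nabla_{t,x}\langle f\rangle_B$ into the transport term (measured in $L^q(\cQ;W^{-1,q}(B))$) plus a $\nabla_v f$ correction. The ``commutator identity'' you flag as the main obstacle is implemented in the paper by testing against products $\varphi(t,x)\chi_i(v)$ with $\int_B \chi_i(v)\,v_j\,\dd v=\delta_{ij}$, which rewrites $\partial_i\langle f\rangle_B$ as $(\partial_t+v\cdot\nabla_x)\langle f\rangle_B$ paired with $\varphi\chi_i$; the difference $(\partial_t+v\cdot\nabla_x)(\langle f\rangle_B-f)$ is then integrated by parts onto $\varphi\chi_i$ and absorbed by the velocity Poincaré inequality, exactly as you anticipate.
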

% -------------------
\begin{proof}
  The proof is a variation along the ideas
  in~\cite[Theorem~7.2-(1)]{am19}, where we replace the Gaussian
  Poincar\'e inequality with the Poincar\'e-Wirtinger inequality
  in $v$:
  \begin{equation}
    \label{eq:pw}
    \int_{B} \left|g - \langle g \rangle_{B} \right|^q \dd v  \le
    C_{PW} \int_{B} \left| \nabla_v g \right|^q \dd v
    \quad \mbox{ with } \quad \langle g \rangle_{B} :=
    \frac{1}{|B|} \int_{B} g(v) \dd v.
  \end{equation}

  We use the partial $v$-average $\langle f \rangle_{B}(t,x)$ as an
  intermediate term in the inequality:
  \begin{equation*}
    \int_{Q} \left|f - \langle\langle f \rangle\rangle_{Q} \right|^q
    \dd t \dd x \dd v 
    \le 2\int_{Q} \left|f - \langle f \rangle_{B} \right|^q
      \dd t \dd x \dd v + 
     2 \int_{Q} \left|\langle f \rangle_{B}
      - \langle\langle f \rangle\rangle_{Q} \right|^q \dd t \dd x \dd v.
  \end{equation*}
  
  The first term in the right hand side is estimated with \eqref{eq:pw}
  applied at each $(t,x) \in \cQ$:
  \begin{align*}
    \int_{Q} \left|f - \langle f \rangle_{B} \right|^q
      \dd t \dd x \dd v \le C_{PW} \int_{Q} \left| \nabla_v f
    \right|^q \dd t \dd x \dd v. 
  \end{align*}
  
  To estimate the second term we first remove the $v$-integration
  and apply Lemma~\ref{l:0PW}:
  \begin{align}\nonumber
    \int_{\cQ \times B} \left|\langle f \rangle_{B}
    - \langle\langle f \rangle\rangle_{\cQ \times B} \right|^q
    \dd t \dd x \dd v
    &= |B| \int_{\cQ} \left|\langle f \rangle_{B}
      - \langle\langle f \rangle\rangle_{\cQ \times B} \right|^q
      \dd t \dd x\\
    &\le |B| C_{PW} ^0 \left\| \nabla_{t,x} \langle f \rangle_{B}
      \right\|_{W^{-1,q}(\cQ)} ^q. 
  \end{align}
  
  We are left with proving the following hypoelliptic estimate
  \begin{equation}
    \label{eq:hypo}
    \left\| \nabla_{t,x} \langle f \rangle_{B}
      \right\|_{W^{-1,q}(\cQ)} ^2 \le C_{hypo} \left( \left\| \nabla_v f
    \right\|_{L^q(\cQ\times B)} ^2 + \left\| \left(\partial_t + v\cdot
    \nabla_x\right) f\right\|_{L^q(\cQ;W^{-1,q}(B))} ^2 \right)
  \end{equation}
  for some constant $C_{hypo} >0$ depending on $\cQ$. Let us
  denote $v_0:=1$, and $\partial_0:=\partial_t$, $\partial_i :=
  \partial_{x_i}$ for $i=1,\dots,d$. Consider
  $\varphi(t,x) \in W^{1,p}_0(\cQ)$ and $\chi_i \in C^\infty_c(B)$,
  $i=0,1,\dots,d$ such that
  \begin{equation*}
    \fa i,j=0,1,\dots,d, \quad
     \int_{B} \chi_i(v) v_j \dd v =\delta_{ij}.%  \ \mbox{ and } \ \int_{B}
    %   \chi_0(v) v_j \dd v = 0, \\
    % & \fa i,j=1,\dots,d, \quad 
    %   \int_{B} \chi_i(v) \dd v =0 \ \mbox{ and } \ 
    %   \int_{B} \chi_i(v) v_j \dd v =\delta_{ij}.
  \end{equation*}
  We then integrate any derivative
  $\partial_i \langle f \rangle_{B}$ for $i=0,1,\dots,d$, against
  $\varphi$ and compute:
  \begin{align*}
    & \int_{\cQ} \partial_i \langle f \rangle_{B}(t,x)
      \varphi(t,x) \dd t  \dd x\\
    & = \int_{\cQ \times B} \partial_i \langle f \rangle_{B}(t,x)
      \varphi(t,x) v_i \chi_i(v) \dd t \dd x \dd v \\
    & = \int_{\cQ \times B} \left(\partial_t + v \cdot
      \nabla_x\right) \langle f \rangle_{B}(t,x) \varphi(t,x)
      \chi_i(v) \dd t
      \dd x \dd v \\
    & = \int_{\cQ \times B} \left(\partial_t + v \cdot \nabla_x
      \right) \left( \langle f \rangle_{B}(t,x) -f(t,x,v) \right)
      \varphi(t,x) \chi_i(v) \dd t
      \dd x \dd v \\
    & \qquad \qquad + \int_{\cQ \times B} \left(\partial_t + v
      \cdot \nabla_x \right) f(t,x,v) \varphi(t,x) \chi_i(v) \dd t
      \dd x \dd v  =: I_1 ^i + I_2 ^i.
  \end{align*}
  The first term is controlled from the Poincar\'e-Wirtinger
  inequality in $v$ again:
  \begin{align*}
    \left|I_1^i \right| \le \norm{f-\langle f \rangle_{B}}_{L^q(\cQ\times B)}
    \norm{\left(\partial_t + v \cdot \nabla_x \right)(\varphi
    \chi_i)}_{L^p(\cQ\times B)} \lesssim \norm{\nabla_v
    f}_{L^q(\cQ \times B)}. 
  \end{align*}
  The second term is controlled by
  \begin{align*}
    \left|I_2^i\right| \le \norm{\left(\partial_t + v \cdot
    \nabla_x\right)f}_{L^q(\cQ;W^{-1,q}(B))}
    \norm{\varphi}_{L^p(\cQ)} \norm{\chi_i}_{W^{1,p}_0(B)}.
  \end{align*}
  % \emph{Control of the $x_i$ derivative.} Integrate
  % $\partial_{x_i} \langle f \rangle_{B}$ against
  % $\varphi v_i \chi_i$:
  % \begin{align*}
  %   & \int_{\cQ} \partial_{x_i} \langle f \rangle_{B}(t,x) \varphi(t,x) \dd t
  %     \dd x\\
  %   & = \int_{\cQ \times B} v_i \partial_{x_i} \langle f \rangle_{B}(t,x)
  %     \varphi(t,x) \chi_i(v) \dd t
  %     \dd x \dd v \\
  %   & = \int_{\cQ \times B} (\partial_t + v \cdot \nabla_x) \langle f \rangle_{B}(t,x) \varphi(t,x) \chi_i(v) \dd t
  %     \dd x \dd v \\
  %   & = \int_{\cQ \times B} (\partial_t + v \cdot \nabla_x) \left( \langle f \rangle_{B}(t,x) -f(t,x,v) \right) \varphi(t,x) \chi_i(v) \dd t
  %     \dd x \dd v \\
  %   & \qquad \qquad + \int_{\cQ \times B} ( \partial_t + v \cdot \nabla_x) f(t,x,v) \varphi(t,x) \chi_i(v) \dd t
  %     \dd x \dd v \\
  %   & =: I_1^{x_i} + I_2 ^{x_i}.
  % \end{align*}
  % The first term is controlled from the Poincar\'e-Wirtinger inequality in $v$ again:
  % \begin{align*}
  %   \left|I_1^{x_i}\right| \le \norm{f-\langle f \rangle_{B}}_{L^q(\cQ\times B)}
  %   \norm{( \partial_t + v \cdot \nabla_x )(\varphi
  %   \chi_0)}_{L^p(\cQ\times B)} \lesssim \norm{\nabla_v f}_{L^q(\cQ
  %   \times B)}. 
  % \end{align*}
  % The second term is controlled by 
  % \begin{align*}
  %   \left|I_2^{x_i}\right| \le \norm{(\partial_t + v \cdot   \nabla_x)f}_{L^q(\cQ;W^{-1,q}(B))} \norm{\varphi}_{L^p(\cQ)}
  %   \norm{\chi_0}_{W^{1,p}_0(B)}. 
  % \end{align*}
  This establishes the hypoelliptic estimate~\eqref{eq:hypo} and
  concludes the proof of~\eqref{eq:pw-pure}.
\end{proof}
%--------------------------------------------------------------------------------------
\begin{corollary}[Local $L^q$ estimate]\label{c:hpw}
  For any function $f \in L^2 (Q)$ solution to~\eqref{e:main} on
  $Q = \cQ \times B$ we have for any $1<q\leq 2$,
  \begin{equation}
    \label{eq:pw-applied}
    \left\| f - \langle\langle f  \rangle\rangle_{Q}
    \right\|_{L^q (Q)} \le C_{hpw} ' \left( \left\| \nabla_v f
      \right\|_{L^q (Q)} + \norm{S}_{L^2 (Q)}\right).
  \end{equation}
  The constant $C'_{hpw} >0$ only depends on $Q$ and the
  ellipticity constant $\Lambda$.
\end{corollary}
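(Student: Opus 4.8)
The plan is to apply the hypoelliptic Poincaré–Wirtinger inequality of Theorem~\ref{t:hpw} to $f$, and to control the transport term $\left\| (\partial_t + v\cdot \nabla_x) f\right\|_{L^q(\cQ;W^{-1,q}(B))}$ appearing on its right-hand side directly from the equation~\eqref{e:main}. Since $f$ is a weak solution on $Q=\cQ\times B$ (which is exactly of the form required in Theorem~\ref{t:hpw}), for almost every $(t,x)\in\cQ$ the slice $f(t,x,\cdot)$ lies in $H^1(B)$ and satisfies, in the sense of distributions on $B$,
\[
(\partial_t + v\cdot \nabla_x) f = \nabla_v\cdot(\cA \nabla_v f) + \cB\cdot \nabla_v f + \cS ,
\]
so it is enough to bound the $W^{-1,q}(B)$-norm of each of the three terms on the right and then integrate the $q$-th powers over $\cQ$.

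For the second-order term, the point is that a pure $v$-divergence of an $L^q(B)$ field has $W^{-1,q}(B)$-norm no larger than the $L^q(B)$-norm of that field (test against $\varphi\in W^{1,p}_0(B)$ with $\tfrac1p+\tfrac1q=1$ and integrate by parts); hence, using that the eigenvalues of $\cA$ lie in $[\lambda,\Lambda]$,
\[
\left\| \nabla_v\cdot(\cA \nabla_v f)\right\|_{W^{-1,q}(B)} \le \left\| \cA \nabla_v f\right\|_{L^q(B)} \le \Lambda \left\| \nabla_v f\right\|_{L^q(B)} .
\]
For the drift term I would use the continuous embedding $L^q(B)\hookrightarrow W^{-1,q}(B)$ (with a constant depending only on $|B|$, hence on $Q$) together with $\|\cB\|_{L^\infty}\le\Lambda$, and for the source the same embedding gives $\|\cS\|_{W^{-1,q}(B)}\le C(Q)\|\cS\|_{L^q(B)}$. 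Raising to the power $q$, integrating over $\cQ$, and recalling that $\nabla_v f\in L^q(Q)$ and $\cS\in L^q(Q)$ for $1<q\le 2$ since $Q$ has finite measure, we obtain
\[
\left\| (\partial_t + v\cdot \nabla_x) f\right\|_{L^q(\cQ;W^{-1,q}(B))} \le C(Q)\left( \Lambda\left\| \nabla_v f\right\|_{L^q(Q)} + \left\| \cS\right\|_{L^q(Q)} \right).
\]

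To conclude, I would bound $\|\cS\|_{L^q(Q)}\le |Q|^{1/q-1/2}\|\cS\|_{L^2(Q)}$ by Hölder's inequality (legitimate because $q\le 2$ and $|Q|<\infty$), insert the previous estimate into the right-hand side of~\eqref{eq:pw-pure}, and absorb all constants into a single $C'_{hpw}$ depending only on $Q$ (through $C_{hpw}$, $|Q|$ and $|B|$) and on $\Lambda$, as in the statement. There is no genuine obstacle here; the only points asking for a little care are checking that the weak formulation of~\eqref{e:main} in Definition~\ref{defi:weak-sol} may be used "slice by slice" in $(t,x)$ against test functions of product form $\varphi(t,x)\psi(v)$ with $\psi\in C^\infty_c(B)$ — which then yields exactly the distributional identity on $B$ used above — and the elementary duality estimate for $\nabla_v\cdot(\cA\nabla_v f)$ in $W^{-1,q}(B)$ read off from the definition of the negative Sobolev norm.
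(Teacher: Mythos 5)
Your proposal is correct and follows essentially the same route as the paper: apply Theorem~\ref{t:hpw} and then bound $\|(\partial_t+v\cdot\nabla_x)f\|_{L^q(\cQ;W^{-1,q}(B))}$ by duality from the equation, controlling the divergence term by $\Lambda\|\nabla_v f\|_{L^q}$ and the lower-order terms via the embedding $L^q\hookrightarrow W^{-1,q}$, before concluding with H\"older to pass from $\|\cS\|_{L^q}$ to $\|\cS\|_{L^2}$. The only cosmetic difference is that the paper tests once and for all against $\varphi\in L^p(\cQ;W^{1,p}_0(B))$ rather than arguing slice by slice in $(t,x)$, which sidesteps the minor justification you flag at the end.
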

% --------------------------------------------------------------------------------------
\begin{proof}
Let us now use the equation \eqref{e:main} to prove that
  \begin{align*}
    \norm{ (\partial_t + v\cdot    \nabla_x)
    f}_{L^q(\cQ;W^{-1,q}(B))}  \lesssim \norm{\nabla_v
    f}_{L^q(\cQ\times B)}  + \norm{S}_{L^q (Q)}.
  \end{align*}
  Consider $\varphi \in L^p(\cQ;W^{1,p}_0(B))$ and compute
  \begin{align*}
    \left| \int_{Q} ( \partial_t  + v\cdot \nabla_x ) f \varphi
    \dd z \right|
    & \le  \left| \int_{Q} \left( \cA \nabla_v f \right) \cdot
      \nabla_v  \varphi \dd t \dd x \dd v \right| 
      + \left| \int_{Q} (\cB \cdot \nabla_v f)    \varphi \dd t \dd
      x \dd v \right| + \left| \int_{Q} \cS \varphi \dd t \dd x
      \dd v \right| \\
    & \le \Lambda \left( \norm{\nabla_v f}_{L^q(Q)} +
      \norm{\cS}_{L^q (Q)} \right)
      \norm{\varphi}_{L^p(\cQ;W^{1,p}_0(B))}
  \end{align*}
  which concludes the proof.  
\end{proof}
% -----------------------------------------------------------------------------------
\begin{corollary}[Gain of integrability of the solution controlled by the gradient]
  \label{cor:gain}
  Let $f$ be a weak solution to~\eqref{e:main} in $Q$ (as per
  Definition~\ref{defi:weak-sol}). Then for all cylinders
  $Q_r(z_0)$ such that $Q_{2r}(z_{0})\subset Q$, $f$ satisfies
   \begin{equation*}
     \norm{f-\langle\langle f
       \rangle\rangle_{Q_r(z_0)}}_{L^{p}(Q_r(z_0))}\le C_r \left(
       \norm{\nabla_v f}_{L^{2}(Q_{2r}(z_0))} +
       \norm{S}_{L^2(Q_{2r}(z_0))}\right),
   \end{equation*}
   where $p=\frac{6(2d+1)}{6d+1}>2$ and $C_r >0$ only depends on
   the dimension $r$, $d$, $\lambda$ and $\Lambda$.
\end{corollary}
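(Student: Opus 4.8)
The strategy is to combine the interior gain of integrability for the solution itself (Proposition~\ref{prop:Gain sol}) with the hypoelliptic $L^2$ Poincaré--Wirtinger inequality (Corollary~\ref{c:hpw}), the bridge between the two being the replacement of $f$ by its oscillation. First I would set $c:=\langle\langle f\rangle\rangle_{Q_r(z_0)}$ and $\tilde f:=f-c$. Since $c$ is constant it is annihilated by the transport operator $\partial_t+v\cdot\nabla_x$ and by every operator on the right-hand side of~\eqref{e:main}, so $\tilde f$ is again a weak solution of~\eqref{e:main} in $Q$ with the same source $\cS$, satisfies $\nabla_v\tilde f=\nabla_v f$, and it suffices to estimate $\|\tilde f\|_{L^p(Q_r(z_0))}$. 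Using the translation invariance in $(t,v)$ and the Galilean invariance recalled in the introduction --- under $z\mapsto z_0\circ z$ the operator $\partial_t+v\cdot\nabla_x$ and the field $\nabla_v$ are preserved, while the facts that the eigenvalues of $\cA$ stay in $[\lambda,\Lambda]$, that $\|\cB\|_\infty\le\Lambda$ and that $\cS\in L^2$ survive with the same constants --- one may assume $z_0=0$. This both turns the $z_0$-dependence of the constant in Proposition~\ref{prop:Gain sol} into a purely dimensional one and makes Corollary~\ref{c:hpw}, which is stated on cylinders centred at the origin, directly applicable.

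Next, apply Proposition~\ref{prop:Gain sol} to the weak solution $\tilde f$ with inner radius $r$ and outer radius $2r$ (admissible since $Q_{2r}\subset Q$): with $p_0:=2+\tfrac1d$,
\begin{equation*}
  \|\tilde f\|_{L^{p_0}(Q_r)}^2\le C\big(\overline{C}^2\|\tilde f\|_{L^2(Q_{2r})}^2+\overline{C}\,\|\cS\|_{L^2(Q_{2r})}^2\big).
\end{equation*}
Since $Q_r$ is bounded and $\tfrac{6(2d+1)}{6d+1}<p_0$ (equivalently $6d<6d+1$), Hölder's inequality lowers the left-hand exponent to the claimed $p=\tfrac{6(2d+1)}{6d+1}$ at the price of a factor $|Q_r|^{1/p-1/p_0}=|Q_r|^{1/(6(2d+1))}$.

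It remains to control $\|\tilde f\|_{L^2(Q_{2r})}$ by $\nabla_v f$ and $\cS$. Here lies the only genuine subtlety: $\tilde f$ has zero mean over $Q_r$ but not over $Q_{2r}$, whereas both Corollary~\ref{c:hpw} and the very logic of a Poincaré inequality require subtracting the mean over the cylinder of integration. I would therefore correct the constant: with $c':=\langle\langle f\rangle\rangle_{Q_{2r}}$, the elementary bound $|c-c'|=\big|\fint_{Q_r}(f-c')\big|\le|Q_r|^{-1/2}\|f-c'\|_{L^2(Q_{2r})}$ together with $|Q_{2r}|/|Q_r|=2^{4d+2}$ gives $\|\tilde f\|_{L^2(Q_{2r})}\le(1+2^{2d+1})\|f-c'\|_{L^2(Q_{2r})}$, and then Corollary~\ref{c:hpw} with $q=2$ on $Q_{2r}$ gives $\|f-c'\|_{L^2(Q_{2r})}\le C'_{hpw}\big(\|\nabla_v f\|_{L^2(Q_{2r})}+\|\cS\|_{L^2(Q_{2r})}\big)$. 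Chaining the last three displays, taking square roots, absorbing all constants (depending only on $r$, $d$, $\lambda$, $\Lambda$) into $C_r$, and undoing the change of variables $z\mapsto z_0\circ z$ yields the statement at a general $z_0$. The crux is therefore not any single step but arranging that the right-hand side ends up involving $\nabla_v f$ (and $\cS$) only and never $f$ itself: this is precisely why the hypoelliptic Poincaré inequality of Corollary~\ref{c:hpw} --- rather than a mere energy or Sobolev estimate --- is the indispensable ingredient.
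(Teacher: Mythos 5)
Your proof is correct and follows essentially the same route as the paper, which simply combines Proposition~\ref{prop:Gain sol} with Corollary~\ref{c:hpw} (with $q=2$) applied to $f$ minus its average. The extra care you take --- adjusting between the means over $Q_r(z_0)$ and $Q_{2r}(z_0)$, lowering the exponent from $2+1/d$ to $p=\frac{6(2d+1)}{6d+1}$ by H\"older on the bounded cylinder, and removing the $v_0$-dependence of the constant via Galilean invariance --- correctly fills in details that the paper's one-line proof leaves implicit.
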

%-----------------------------------------------------------------------------------------
\begin{proof}
  The result follows from the combination of Proposition
  \ref{prop:Gain sol} and Corollary \ref{c:hpw} with $q=2$ applied
  to $f-\langle\langle f \rangle\rangle_{Q_r(z_0)}$, which is a
  weak solution to~\eqref{e:main}.
\end{proof}

\subsection{Proof of the gain of integrability of the velocity
  gradient}

\begin{proof}[Proof of Theorem \ref{t:gain velocity gradient}]
  We first prove that the velocity gradient of any solution
  $\overline{f}$ to~\eqref{e:main} satisfies
  \eqref{eq:reverse-holder} for the cylinders $Q_1$ and
  $Q_{\gamma}$ and for $q=2$. 
  We use Proposition \ref{prop:EE},an interpolation inequality of $L^2$
  between $L^p$ and $L^{p'}$ where $p>2$ is defined in
  Corollary~\ref{cor:gain} and $p':=p/(p-1) \in (1,2)$ and a Young inequality:
  \begin{align*}
    \norm{\nabla_v \overline{f}}_{L^{2}(Q_1)}^2
    &\le C \norm{\overline{f}-\langle\langle
      \overline{f}\rangle\rangle_{Q_{(\gamma+1)/2}}}_{L^{2}(Q_{(\gamma+1)/2})}^2
      + C\norm{\overline{\cS}}_{L^2 (Q_\gamma)}^2   \\
      &\le \norm{\overline{f}-\langle\langle
      \overline{f}\rangle\rangle_{Q_{(\gamma+1)/2}}}_{L^{p}(Q_{(\gamma+1)/2})}\norm{\overline{f}-\langle\langle
      \overline{f}\rangle\rangle_{Q_{(\gamma+1)/2}}}_{L^{p'}(Q_{(\gamma+1)/2})}+ C\norm{\overline{\cS}}_{L^2 (Q_\gamma)}^2 \\
    &\le \eps \norm{\overline{f}-\langle\langle
      \overline{f}\rangle\rangle_{Q_{(\gamma+1)/2}}}_{L^{p}(Q_{(\gamma+1)/2})}^{2}
      + C\norm{\overline{f}-\langle\langle
      \overline{f}\rangle\rangle_{Q_{(\gamma+1)/2}}}_{L^{p'}(Q_{(\gamma+1)/2})}^{2}
      + C\norm{\overline{\cS}}_{L^2 (Q_\gamma)}^2 \\
    \intertext{we now use Corollary~\ref{cor:gain} on the first
    term and Corollary~\ref{c:hpw} with $q=p'$ on the
    second term:}
    & \le \eps C \norm{\nabla_v \overline{f}}_{L^{2}(Q_{\gamma})}^2
      + C \norm{\nabla_v \overline{f}}_{L^{p'}(Q_{(\gamma+1)/2})}^2
      + C\norm{\overline{\cS}}_{L^2 (Q_\gamma)}^2 \\
          & \le \eps C \norm{\nabla_v \overline{f}}_{L^{2}(Q_{\gamma})}^2
      + C' \norm{\nabla_v \overline{f}}_{L^2(Q_{(\gamma+1)/2})}^{2\delta}\norm{\nabla_v \overline{f}}_{L^1(Q_{(\gamma+1)/2})}^{2(1-\delta)}
      + C\norm{\overline{\cS}}_{L^2 (Q_\gamma)}^2 \\
    & \le 2 \eps C \norm{\nabla_v \overline{f}}_{L^{2}(Q_{\gamma})}^2
      + C'' \norm{\nabla_v \overline{f}}_{L^1(Q_{(\gamma+1)/2})}^2
      + C\norm{\overline{\cS}}_{L^2 (Q_\gamma)}^2
  \end{align*}
  where we have used an interpolation inequality of $L^{p'}$ betwenn $L^1$ and $L^2$ with $\delta\in (0,1)$ such that $\frac{1}{p'}=\frac{\delta}{2}+\frac{1-\delta}{1}$ and a Young inequality in the last
  line. From the previous inequality with $\theta=2 \varepsilon C$
  with $\varepsilon$ small enough so that $\theta<\theta_0$
  follows
  \begin{align}
    \label{hyp Gehring cylindre fixe}
    \fint_{Q_{1}} |\nabla_v \overline{f}|^2 \le
    \theta \fint_{Q_{\gamma}} |\nabla_v \overline{f}|^2+
    C(\theta) \left(\fint_{Q_{\gamma}} |\nabla_v
    \overline{f}| \right)^{2} + C(\theta) \fint_{Q_\gamma}
    \overline{\cS}^2.
  \end{align}
  Now let us prove that the velocity gradient of any weak
  solution $f$ to~\eqref{e:main} satisfies
  \eqref{eq:reverse-holder} for any cylinder such that
  $Q_{\gamma r}(z_0) \subset Q_{\gamma R}(z)$ with
  $z_0=(t_0,x_0,v_0)\in Q$. Consider such a solution $f$ and a
  cylinder $Q_{r}(z_0)$ such that
  $Q_{\gamma r}(z_0) \subset Q_{\gamma R}(z)$.  We apply
  \eqref{hyp Gehring cylindre fixe} to the solution 
  $\overline{f}(t,x,v)=f(r^2t+t_0,r^3x+x_0+r^2tv_0,rv+v_0)$ of the equation~$\eqref{e:main}$ with different coefficients $\bar{\cB}=r\cB, \bar{\cS}=r^2\cS$ satisfying the same conditions, so
  that
  \begin{equation*}
    f(t,x,v)=
    \overline{f}\left(\frac{t-t_0}{r^2},\frac{x-x_0-(t-t_0)v_0}{r^3},
    \frac{v-v_0}{r}\right).
  \end{equation*}
  This yields
  \begin{align*}
    & \fint_{Q_r(z_0)} |\nabla_v f|^2=
    \frac{1}{r^2}\fint_{Q_{1}} |\nabla_v \overline{f}|^2,\\
    & \fint_{Q_{\gamma r}(z_0)} |\nabla_v f|=
    \frac{1}{r}\fint_{Q_{\gamma}} |\nabla_v
    \overline{f}|,\\
    & \fint_{Q_{\gamma r}(z_0)} |\nabla_v f|^2=
    \frac{1}{r^2}\fint_{Q_{\gamma}} |\nabla_v \overline{f}|^2,\\
     & \fint_{Q_{\gamma r}(z_0)} \cS^2=
    \frac{1}{r^4}\fint_{Q_{\gamma}} \overline{\cS}^2.
  \end{align*}
  Combining those inequalities with~\eqref{hyp Gehring cylindre
    fixe} and the fact that $r\leq 1$ gives
  \begin{equation*}
    \fint_{Q_r(z_0)} |\nabla_v f|^2 \le \theta \fint_{Q_{\gamma
        r}(z_0)} |\nabla_v f|^2
    + C(\theta) \left[ \left(\fint_{Q_{\gamma r}(z_0)} |\nabla_v
        f|\right)^{2}+ \fint_{Q_{\gamma r}(z_0)} \cS^{2}
    \right],
  \end{equation*}
  which is \eqref{eq:reverse-holder} with $g = |\nabla_v f|$
  and $q=2$ and $b = C(\theta)$ and $h =\cS$.  We now apply
  Theorem~\ref{thm:gehring} and get the desired gain of
  integrability for $\nabla_v f$.
\end{proof}

\bibliographystyle{amsalpha}
\bibliography{Gehring}

\end{document}